\newcommand{\hH}{\hat{\mathcal H}}
\newcommand{\tM}{\tilde M}
\newcommand{\tP}{\tilde P}
\newcommand{\tS}{\tilde{\mathcal S}}
\newcommand{\tY}{\tilde Y}
\newcommand{\tZ}{\tilde Z}
\newcommand{\ty}{\tilde y}
\newcommand{\tcA}{\tilde{\mathcal A}}
\newcommand{\tcB}{\tilde{\mathcal B}}
\newcommand{\tcC}{\tilde{\mathcal C}}
\newcommand{\tcD}{\tilde{\mathcal D}}
\newcommand{\bR}{{\mathbb R}}
\newcommand{\bZ}{{\mathbb Z}}
\newcommand{\cA}{{\mathcal A}}
\newcommand{\cB}{{\mathcal B}}
\newcommand{\cC}{{\mathcal C}}
\newcommand{\cD}{{\mathcal D}}
\newcommand{\cH}{{\mathcal H}}
\newcommand{\cJ}{{\mathcal J}}
\newcommand{\cM}{{\mathcal M}}
\newcommand{\cS}{{\mathcal S}}
\newcommand{\cT}{{\mathcal T}}
\newcommand{\cW}{{\mathcal W}}
\newcommand{\cY}{{\mathcal Y}}
\newcommand{\cZ}{{\mathcal Z}}
\newcommand{\bn}{\binom}
\newcommand{\hX}{{\hat X}}
\newcommand{\hY}{{\hat Y}}
\newcommand{\hZ}{{\hat Z}}
\newcommand{\hS}{\hat{\mathcal S}}
\newcommand{\dagg}{\dagger}
\newcommand{\deft}{\operatorname{\text{\rm def}}}
\newcommand{\diag}{\operatorname{\text{\rm diag}}}
\newcommand{\rank}{\operatorname{\text{\rm rank}}}
\newcommand{\Ker}{\operatorname{\text{\rm Ker}}}
\newcommand{\Imm}{\operatorname{\text{\rm Im}}}
\newcommand{\ind}{\operatorname{\text{\rm ind}}}
\newcommand{\const}{\operatorname{\text{\rm const}}}
\newcommand{\K}{_{k+1}}
\newcommand{\smat}[4]{\left
(\begin{smallmatrix}#1&#2\\#3&#4\end{smallmatrix}\right)}
\newcommand{\mmatrix}[1]{\left(\begin{matrix} #1
  \end{matrix}\right)}
\newcommand{\Sp}{Sp(2n)}
\newcommand{\bnn}{(0\,\,I)^T}
\newcommand{\Rbb}{{\mathbb R}}
\newcommand{\Zbb}{{\mathbb Z}}
\newcommand{\la}{\lambda}
\newcommand{\sZ}{{\scriptscriptstyle\Zbb}}
\newcommand{\tB}{{\tilde {\mathcal B}}}
\numberwithin{equation}{section}
\theoremstyle{plain}
\newtheorem{theorem}{Theorem}[section]
\newtheorem{lemma}[theorem]{Lemma}
\newtheorem{corollary}[theorem]{Corollary}
\newtheorem{proposition}[theorem]{Proposition}
\newtheorem{remark}[theorem]{Remark}
\theoremstyle{definition}
\newtheorem{definition}[theorem]{Definition}
\newtheorem{example}[theorem]{Example}
\theoremstyle{remark}
\begin{document}
\title{Renormalized Oscillation Theory for Symplectic Eigenvalue Problems with Nonlinear Dependence on the Spectral Parameter}

\author{
\name{Julia Elyseeva\thanks{CONTACT Julia Elyseeva. Email: elyseeva@gmail.com}}
\affil{Department of Applied Mathematics, Moscow State University of Technology, Vadkovskii per. 3a, 101472, Moscow, Russia}
}

\maketitle

\begin{abstract}
In this paper we establish new renormalized oscillation theorems
for discrete symplectic eigenvalue problems with Dirichlet boundary
conditions. These theorems present the number of finite eigenvalues
of the problem in arbitrary interval $(a,b]$ using number of focal
points of a transformed conjoined basis associated with Wronskian
of two principal solutions of the symplectic system evaluated at
the endpoints $a$ and $b.$  We suppose that   the symplectic
coefficient matrix of the system depends nonlinearly on the
spectral parameter and that it satisfies certain natural
monotonicity assumptions. In our treatment we admit possible
oscillations in the coefficients of the symplectic system by
incorporating their nonconstant rank with respect to the spectral
parameter.
\end{abstract}

\begin{keywords}
Discrete eigenvalue problem; Symplectic difference system; Renormalized oscillation theory; Comparative index
\end{keywords}
\begin{amscode}39A12; 39A21 \end{amscode}
\section{Introduction}\label{sec1}
In this paper we consider the discrete  symplectic system
\begin{equation*} \label{Sla}
  y_{k+1}(\la)=\cS_k(\la)\,y_k(\la), \quad k\in[0,N]_\sZ, \quad \la\in\Rbb,\tag{S$_\la$}
\end{equation*}
with  the Dirichlet boundary conditions
\begin{equation*} \label{E0}
   x_0(\la)=0=x_{N+1}(\la), \tag{E$_0$}
   \end{equation*}
where we use the notation $[M,N]_\sZ:=[M,N]\cap\Zbb$ for the discrete interval with endpoints
$M,N\in\Zbb$. The coefficient matrix $\cS_k(\la)\in \bR^{2n\times 2n}$ of system \eqref{Sla} with $n\times n$ blocks $\cA(\la)$, $\cB(\la)$,
$\cC(\la)$, $\cD(\la)$ depending nonlinearly on the spectral parameter $\la\in\Rbb$ is assumed to be symplectic, i.e., for all $k\in[0,N]_\sZ$ and $\la\in\Rbb$ we have
\begin{equation} \label{dnlaosc:E:Intro.4}
  \cS_k^T(\la)\,\cJ\cS_k(\la)=\cJ, \quad \cS_k(\la):=\mmatrix{\cA_k(\la) & \cB_k(\la) \\ \cC_k(\la) & \cD_k(\la)}, \quad \cJ:=\mmatrix{0 & I \\ -I & 0}.
\end{equation}
In addition, we assume that the matrix
$\cS_k(\la)$ piecewise continuously differentiable in $\la\in\Rbb$, i.e., it is continuous on $\Rbb$ and the derivative
$\dot{\cS}_k(\la):=\frac{d}{d\la}\cS_k(\la)$ is piecewise continuous in the parameter $\la\in\Rbb$ for all $k\in[0,N]_\sZ$. Given the above
symplectic matrix $\cS_k(\la)$ we consider the  monotonicity assumption
\begin{equation} \label{Smon}
  \Psi(\cS_k(\la))=\Psi_k(\la):=\cJ^T\!{\dot{\cS}}_k(\la)\,\cS_k^{-1}(\la)\geq0, \quad k\in[0,N]_\sZ, \quad
  \la\in\Rbb
\end{equation}
for the $2n\times2n$ matrix $\Psi_k(\la)$, which is symmetric for any $k\in[0,N]_\sZ$ and $\la\in\Rbb$ according to \cite{rSH12}. The notation
$A\ge 0$ means that the matrix $A$ is symmetric and nonnegative definite. Symplectic difference systems \eqref{Sla} cover as special cases many important
difference equations, such as the second order (or even order) Sturm--Liouville difference equations, symmetric three-term recurrence equations,
and linear Hamiltonian difference systems, see \cite{cdA.acP96,mB.oD97,mB98b,rSH12,wK.rSH13}. A complete review of the history  and  development of
 qualitative theory of \eqref{Sla} is given in the new monograph \cite{bookDEH} (see also the references therein).

Classical oscillation theorems connect the oscillation and spectral
theories of \eqref{Sla}. Assume that we need to know how many
eigenvalues of \eqref{Sla}, \eqref{E0} are located in the given
interval $(a,b]\subseteq \bR.$ Then, according to the global
oscillations theorems in \cite{B-D-K-RMJM,D-K-JDEA-2007} the
difference $ l_d(Y^{[0]}(b),0,N+1)-l_d(Y^{[0]}(a),0,N+1)$
of the numbers
of focal points calculated for the \textit{principal} solutions
$Y^{[0]}(b),Y^{[0]}(a)$ of \eqref{Sla} evaluated at the endpoints
$\la=a$ and $\la=b$ presents the number of eigenvalues
$\#\{\nu\in\sigma |a <\nu\le b\}$ of \eqref{Sla}, \eqref{E0} in
$(a,b]$. This result was proved in \cite{B-D-K-RMJM,D-K-JDEA-2007}
for the coefficient matrix  $\cS_k(\la)$ and $\Psi_k(\la)$ in the
form
\begin{equation} \label{dnlaosc:E:Intro.7A}
  \cS_k(\la)=\mmatrix{I & 0 \\ -\la\,\cW_k& I}S_k, \;
  \Psi_k(\la)\equiv\Psi_k:=\mmatrix{\cW_k & 0 \\ 0 & 0}\geq0, \; k\in[0,N]_\sZ, \quad \la\in\Rbb,
\end{equation}
 where $S_k$ is a constant symplectic matrix. The same result for \eqref{Sla} with  the general nonlinear
dependence on $\la$ was originally proved in \cite{rSH12} for
$\cB_{k}(\la)=\const$ (here $\cB_k(\la)$ is the block of $\cS_k(\la)$ given by \eqref{dnlaosc:E:Intro.4}) and then generalized in \cite{wK.rSH13} to the case
\begin{equation}\label{rBconst}
  \rank \cB_k(\la)=\const,\quad \la\in\bR,\,k\in[0,N]_{\sZ}.
\end{equation}
Then it was shown in \cite{jE15} that assumption \eqref{rBconst} plays a crucial role in the oscillation theory, in particular, if \eqref{rBconst} is violated the number of focal points of the principal solution of \eqref{Sla} loses the monotonicity with respect to $\la$ and then the difference $l_d(Y^{[0]}(b),0,N+1)-l_d(Y^{[0]}(a),0,N+1)$ can be negative. In this case it is necessary to incorporate oscillations of the block $\cB_k(\la)$ to present a proper generalization of the results in \cite{B-D-K-RMJM,D-K-JDEA-2007,rSH12,wK.rSH13}.
Moreover, it was
proven in \cite[Corollary~2.5]{jE15} that condition
\begin{equation}\label{finrank}
  \rank\cB_{k}(\lambda)=\rank\cB_{k}(\lambda^{-}) \quad\text{for all } \la\leq\la_0,\quad k\in[0,N]_\sZ,
\end{equation}
holds for some $\lambda_{0}\in\bR$ if and only if the real spectrum of \eqref{Sla},\eqref{E0} is bounded from below.
 Observe that for a~fixed $k\in[0,N]_\sZ$ the symplectic matrix $\cS_k(\la)$ can be viewed as the fundamental matrix of the linear Hamiltonian
differential system (with respect to $\la$)
\begin{equation} \label{PsiHam}
  \dot{\cS}_k(\la)=\cJ\,\Psi_k(\la)\,\cS_k(\la), \quad \la\in\Rbb,
\end{equation}
with the symmetric Hamiltonian $\Psi_k(\la)\ge 0$ given by
\eqref{Smon}. In this context one can introduce the numbers
\begin{equation}\label{jump}
 \vartheta(\cS_k(\lambda_0))= \vartheta_k(\lambda_0) := \rank \cB_{k}(\lambda_0^{-})-\rank\cB_{k}(\lambda_0),
 \quad k\in[0,N]_\sZ
\end{equation}
describing the multiplicities  of \textit{proper focal points} (see \cite{Kratz-Analysis}) of $\cS_k(\la)\bnn$ as a conjoined basis of \eqref{PsiHam} and then  condition \eqref{finrank} means that
system \eqref{PsiHam} is \textit{nonoscillatory} for $\la$ near
$-\infty$. In the recent paper \cite{jvE.rSH18} we generalized  the results in \cite{jE15} to the case of symplectic eigenvalue problems with general self-adjoint boundary conditions admitting possible oscillations of their coefficients with respect to $\la\in\bR.$

The \textit{renormalized} and the more general \textit{relative}
oscillation theory makes it possible to replace the difference $l_d(\hY(b),0,N+1)-l_d(Y(a),0,N+1)$  of the numbers of focal points calculated for
$\la = a$ and $\la = b$ by the number of focal points of only one transformed conjoined basis
$\tY_k(a,b)$ associated with
the Wronskian $\hY_k^T(b)\cJ Y_k(a)$ of $Y_k(a)$ and $\hY_k(b).$ Remark that we refer to the renormalized oscillation theory of
\eqref{Sla} when the consideration concerns oscillations of the
Wronskian of two conjoined bases of \eqref{Sla} considered for
different values of $\la.$ The \textit{relative} oscillation theory
investigates the oscillatory behavior of Wronskians of conjoined
bases of two symplectic systems with different coefficient matrices
$\cS_k(\la)$ and $\hS_k(\la),$ then all results of the renormalized
theory follow from the relative oscillation theorems for the case
$\cS_k(\la)=\hS_k(\la),\,\la\in\bR,\,k\in[0,N]_{\sZ}.$

The  {relative} oscillation theory was
developed for eigenvalue problems for the second order
Sturm--Liouville difference and differential  equations (with
linear dependence on $\la$) in
\cite{Gesztesy1,Kruger,teshl3,teschl1,amman}.  In the recent papers
\cite{Gesztesy2,Howard} the renormalized oscillation theory in
\cite{Gesztesy1} is extended  to the case of general linear
Hamiltonian systems with block matrix coefficients, which are
continuous counterparts of \eqref{Sla}.

The relative oscillation theory for two symplectic problems with Dirichlet boundary conditions under restriction \eqref{dnlaosc:E:Intro.7A} is presented in \cite{E-DE-2010,E-AML2010}, in \cite[Theorem 5]{E-AML2012}  the renormalized oscillation theorem  for  \eqref{Sla},  \eqref{dnlaosc:E:Intro.7A}   is
 extended to the case of general self-adjoint boundary conditions. For the case of general nonlinear dependence on $\la$ the first results of the relative oscillation theory for two matrix Sturm-Liouville equations were proved in \cite{E-ADV}. In \cite[Section 6.1]{bookDEH} we presented the relative oscillation theory for two symplectic eigenvalue problems with nonlinear dependence on $\la$ and with general self-adjoint boundary conditions. All these results are derived under restriction \eqref{rBconst} for $\cS_k(\la).$

The main results of this paper are devoted to the renormalized oscillation theory for \eqref{Sla},\eqref{E0} without condition \eqref{rBconst}. In this situation the classical oscillation theorem (see \cite{jE16}) presents the number of finite eigenvalues $\#\{\nu\in\sigma |a <\nu\le b\}$ of problem \eqref{Sla}, \eqref{E0} in $(a,b]$ incorporating oscillations of $\cB_k(\la)$ in terms of  numbers \eqref{jump}   (see  Theorem~\ref{globosc} in Section~\ref{sec2})
 \begin{equation}\label{eig.in.int2}
    l_d(Y^{[0]}(b),0,N+1)-l_d(Y^{[0]}(a),0,N+1)+\sum\limits_{a<\nu\le b}\sum\limits_{k=0}^{N}\vartheta_{k}(\nu)=\#\{\nu\in\sigma |a <\nu\le b\}.
  \end{equation}
A similar formula can be proved for the so-called \textit{backward focal points} $l_d^*(Y^{[N+1]}(\la),0,N+1)$ of the principal solutions at $N+1$ (see Theorem~\ref{globosc*} in Section~\ref{sec2}). The main results of the paper  (see Theorems~\ref{globoscrel1},~\ref{globoscrel1*}) present renormalized versions of  Theorems~\ref{globosc},~\ref{globosc*}, respectively. In more details, introducing a fundamental matrix $Z_k^{[N+1]}(\la)$ of \eqref{Sla}   with the initial condition $Z_{N+1}^{[N+1]}(\la)=I$   we have instead of \eqref{eig.in.int2} the following renormalized formula
\begin{equation}\label{renormeiga1}\begin{aligned}
    l_d((Z^{[N+1]}(a))^{-1}Y^{[0]}(b),0,N+1)+\sum\limits_{a<\nu\le b}\sum\limits_{k=0}^{N}\tilde{\vartheta}_{k}(\nu)=\#\{\nu\in\sigma|\,a<\nu\le b\},
 \end{aligned} \end{equation}
where the numbers $\tilde{\vartheta}_{k}(\nu)$ are associated with the transformed coefficient matrix $\tS_k(\la)=(Z_{k+1}^{[N+1]}(a))^{-1}\cS_k(\la)Z_k^{[N+1]}(a)$ by analogy with \eqref{jump}.
In \eqref{renormeiga1} we have the number $l_d((Z^{[N+1]}(a))^{-1}Y^{[0]}(b),0,N+1)$ which describes oscillations of the transformed conjoined basis $(Z_k^{[N+1]}(a))^{-1}Y_k^{[0]}(b)$ associated with the Wronskian $Y_k^{[N+1]\;T}(a)\cJ Y_k^{[0]}(b)$ of the principal solutions $Y^{[N+1]}_k(a),\,Y_k^{[0]}(b)$ of \eqref{Sla}.  The major advantage of using  \eqref{renormeiga1} instead of \eqref{eig.in.int2}  is   the calculation of  only one number $l_d((Z^{[N+1]}(a))^{-1}Y^{[0]}(b),0,N+1)$ instead of
$l_d(Y^{[0]}(b),0,N+1),\,l_d(Y^{[0]}(a),0,N+1)$ especially in case of  highly oscillatory principal solutions $Y^{[0]}(b),\,Y^{[0]}(a)$. The price of this advantage is  the necessity to evaluate the second addend in \eqref{renormeiga1} which depends on the fundamental matrix $Z^{[N+1]}(\la)$ of system \eqref{Sla} evaluated for $\la=a.$
In Section~\ref{sec4} of the paper we decide this problem presenting \eqref{renormeiga1} in an invariant form incorporating oscillations of $\cS_k(a)-\cS_k(\la),\,\la\in(a,b]$ instead of oscillations of blocks of $\tS_k(\la).$ We proved in Section~\ref{sec4} (see Theorem~\ref{globoscbig}) that \eqref{renormeiga1} is equivalent to
 \begin{equation}\label{renormeiga2}\begin{array}{c}
    L_d((Z^{[N+1]}(a))^{-1}Y^{[0]}(b),0,N+1)+\sum\limits_{a<\nu\le b}\sum\limits_{k=0}^{N}{\rho}_{k}(\nu)=\#\{\nu\in\sigma|\,a<\nu\le b\},\\[4mm]
    \rho_{k}(\la)=\rank(\cS_k(a)-\cS_k(\la^-))-\rank(\cS_k(a)-\cS_k(\la))\ge 0,
 \end{array} \end{equation}
where $L_d((Z^{[N+1]}(a))^{-1}Y^{[0]}(b),0,N+1)$ is the number of (forward) focal points of a $4n\times 2n$ conjoined basis associated with $Z^{[N+1]}(a))^{-1}Y^{[0]}(b)$ (see Remark~\ref{aboutcon}). We call representation \eqref{renormeiga2} \textit{invariant} because after the replacement of the matrices $\cS_k(\la),\,k\in [0,N]_{\sZ}$ by $R_{k+1}^{-1}\cS_k(\la) R_k$  formula \eqref{renormeiga2} stays the same. Here $R_k,\,k\in [0,N+1]_{\sZ}$ is an arbitrary sequence of symplectic transformation matrices which do not depend on $\la.$ In the last part of Section~\ref{sec4} we investigate the renormalized oscillation theory for systems \eqref{Sla} under the assumption $\rho_{k}(\la)=0,\,k\in [0,N]_{\sZ},\,\la\in[a,b]$  which is necessary and sufficient for the equality $L_d((Z^{[N+1]}(a))^{-1}Y^{[0]}(b),0,N+1)=\#\{\nu\in\sigma|\,a<\nu\le b\}.$ In particular, we show that this equality holds for any Hamiltonian difference system (see Corollary~\ref{renham}) under the monotonicity assumption $\dot{\cH}_k(\la)\ge 0,\,k\in [0,N]_{\sZ},\,\la\in[a,b]$ for the discrete Hamiltonian  ${\cH}_k(\la).$

For the proof of the renormalized theorems  we  involve  new results of the oscillation theory for continuous case -- for the differential Hamiltonian systems in form \eqref{PsiHam}. Indeed the theory presented in the paper is now a combination of two oscillation theories -- for the discrete and for the continuous case.  Using comparison theorems for the differential case (see \cite[Theorem 2.2]{jE16}) we present a new interpretation of the results of the discrete spectral theory in \cite{B-D-K-RMJM,D-K-JDEA-2007,rSH12,wK.rSH13,jE15,jvE.rSH18} which helps to provide the proof of Theorems~\ref{globoscrel1},~\ref{globoscrel1*} in a compact form.  In more details, consider a  symplectic fundamental matrix $Z_k(\la)$ of \eqref{Sla} associated with the conjoined basis $Y_k(\la)=Z_k(\la)\bnn$  under the monotonicity assumption $\Psi(Z_k(\la))\ge 0$ with respect to $\la.$ Then for arbitrary sequence of symplectic matrices $R_k,\,k\in [0,N+1]_{\sZ}$  the matrix $R_k^{-1}Y_k(\la)$  can be considered as a function of $\la\in[a,b]$ with the (continuous) number of proper focal points $l_c(R_k^{-1}Y_k,a,b)$ for any fixed index $k$ and similarly, as a function of the discrete variable $k$ with the (discrete) number of focal points  $l_d(R^{-1}Y(\la_0),0,N+1)$ for any fixed $\la=\la_0.$ Introduce the following closed path $\la=a,\,k\in[0,N+1]_\sZ$;  $\la\in[a,b],\,k=N+1;$ $\la=b,\,k\in[0,N+1]_\sZ;$ $\la\in[a,b],\,k=0$  in the plane $(\la,k).$ Then, according to Theorem~\ref{forward} proved in Section~\ref{sec3} we have the following representation for the sum of all focal points (in the continuous and in the discrete settings) along this path
\begin{equation}\label{feb5}\begin{aligned}
  l_d(R^{-1}Y(a),0,N+1)+l_c(R_{N+1}^{-1}Y_{N+1},a,b)&-l_d(R^{-1}Y(b),0,N+1)-l_c(R_0^{-1}Y_0,a,b)\\&=\sum\limits_{k=0}^N l_c(\tS_k\bnn,a,b),
\end{aligned}\end{equation}
where $l_c(\tS_k\bnn,a,b)$ is the number of focal points of $\tS_k(\la)\bnn$ for $\la\in(a,b]$ and $\tS_k(\la)=R_{k+1}^{-1}\cS_k(\la)R_k.$ For the case $R_k:=I,\,k\in [0,N]_{\sZ}$ and $Z_0(\la):=I$ formula \eqref{feb5} turns into \eqref{eig.in.int2} because  $Y_k(\la):=Y^{[0]}_k(\la),$  $l_c(Y_0,a,b)=0,$ and the quantity $l_c(Y_{N+1},a,b)$ presents the number of finite eigenvalues of \eqref{Sla}, \eqref{E0}. Observe that $l_c(Y_{N+1},a,b)$ stays the same for the case of the nonconstant matrices $R_k$ under the condition $R_{N+1}=I.$ Then one can derive from \eqref{feb5} the representation of the number $\#\{\nu\in\sigma|\,a<\nu\le b\}$  using the transformed principal solutions $R_k^{-1}Y_k^{[0]}(\la)$ (see Theorem~\ref{newmay2019}).

The renormalized oscillation theory is connected with further simplifications of \eqref{feb5} when  $R_k$ is chosen in such a way that $R_k^{-1}Y_k^{[0]}(\la)$ does not depend on $k=0,1,\dots,N+1$ for $\la=b$ or for $\la=a.$ In particular, formula \eqref{renormeiga1} is derived  for the case $R_k:=Z_k^{[N+1]}(a)$, when $l_d(R^{-1}Y^{[0]}(a),0,N+1)=~0.$ In the proof of the equivalence of \eqref{renormeiga1} and \eqref{renormeiga2} we also use separation results for the differential case (see \cite[Theorem 2.3]{jE16} and \cite[Theorem~4.1]{pS.rSH17}) to  connect (in an explicit form) the numbers $\rho_{k}(\la)$ in \eqref{renormeiga2} with $\tilde{\vartheta}_{k}(\nu)$ and ${\vartheta}_{k}(\nu)$ in \eqref{renormeiga1} and \eqref{eig.in.int2} (see Lemma~\ref{newosc1}).

The paper is organized as follows. In the next section we recall main notions of the discrete spectral and oscillation theory. We  recall the classical global oscillation theorem from \cite{jE15} (see Theorem~\ref{globosc}) and present the version of this theorem in terms of backward focal points of the principal solution at $ N + 1$  (see Theorem~\ref{globosc*}). In Section~\ref{sec3} we recall some basic notions of the oscillation theory of linear differential Hamiltonian systems \cite{Kratz-Analysis,Hilsch4,BHK} including the recent results from \cite{jE16,pS.rSH17} and prove Theorem~\ref{globoscrel1} and Theorem~\ref{globoscrel1*} which are the renormalized versions of Theorems~\ref{globosc},~\ref{globosc*}. In Section~\ref{sec4} we derive Theorem~\ref{globoscbig}  which generalizes  \cite[Theorem 6.4]{bookDEH} to the case when assumption \eqref{rBconst} is omitted. As an application of the results of Section~\ref{sec4}   we present the renormalized oscillation theorem for the discrete linear Hamiltonian systems (see Example~\ref{Hamex}).
\section{Classical oscillation and spectral theory for symplectic systems with the Dirichlet boundary conditions}\label{sec2}
We will use the following notation. For a matrix $A,$ we denote by
$A^T,\,A^{-1},$  $A^{\dag}, \rank A,$ $\,\Ker A,\,\Imm A,$
$ \ind A,$ $ A \ge~0, A \le 0,$ respectively, its transpose, inverse,
 Moore-Penrose pseudoinverse,  rank (i.e.,
the dimension of its image), kernel, image, index (i.e., the number
of its negative eigenvalues), positive semidefiniteness, negative
semidefiniteness. If $A(t)$ is a matrix-valued function, then by
$\rank A(t^{-}_{0}),\,\ind A(t^{-}_{0})$ ($\rank A(t^{+}_{0}),$ $
\ind A(t^{+}_{0})$) we mean the left-hand (the right-hand) limits
of these quantities at $t_{0}$, provided these limits exist. We use
the notation $A(\la)|_a^b$ for the substitution $A(b)-A(a)$ for
functions of continuous argument $\la,$ and the similar notation
$A_k|_M^N=A_N-A_M$ for functions of discrete argument $k.$ We
denote by $\Delta A_k=A_{k+1}-A_k$  the forward difference
operator. We say that $A(\la)\in C_p^1$ if  $A(\la)$ is continuous with  a piecewise continuous derivative (with respect to $\la$), and use the notation $\Sp$ for the real matrix symplectic group in dimension $2n$, i.e., for real matrices $\cS$ with the condition $\cS^T\cJ\cS=\cJ.$

In this section we recall some important notions and results of the discrete oscillation and spectral theory (see \cite[Chapters~4,5]{bookDEH}).

Recall that $2n\times n$ matrix solution
$Y(\la)=\binom{X(\la)}{U(\la)}$ of \eqref{Sla} with $n\times n$ matrices $X(\la),U(\la)$ is
said to be a {\it conjoined basis} if
\begin{equation}                            \label{conjoined}
\rank\,\bn{X_k(\la)}{U_k(\la)}=n \quad\text{and}\quad X_k(\la)^TU_k(\la)=U_k(\la)^TX_k(\la).
\end{equation}

Recall the definition of forward focal points and their multiplicities for conjoined bases of \eqref{Sla}, see \cite[Definition~1]{K-JDEA}. We say that a~conjoined basis $Y(\la)$ of \eqref{Sla} has a~{\em forward focal point\/} in the
real interval $(k,k+1]$ provided
  $m_d (Y_k(\la)):=\rank M_k(\la)+\ind P_k(\la)\geq1$
and then the number $m_d (Y_k(\la))$ is its {\em multiplicity}, where
\begin{equation*} \label{dnlaosc:E:MTP.def}
   \begin{array}{c}
    M_k(\la):= (I-X_{k+1}(\la)\,X_{k+1}^\dagger(\la))\,\cB_k(\la), \quad
    T_k(\la) :=  I-M_k^\dagger(\la)\,M_k(\la), \\[1mm]
    P_k(\la) := T_k(\la)\,X_k(\la)\,X_{k+1}^\dagger(\la)\,\cB_k(\la)\,T_k(\la),
  \end{array}
\end{equation*}
and the matrix $P_k(\la)$ is symmetric. By a similar way (see \cite{D-Kyoto,E-DE-2009}) one can introduce  the multiplicities of backward focal points of a conjoined basis of \eqref{Sla} in the
real interval $[k,k+1)$. We define
 $ m_d^* (Y_k(\la)):=\rank \tM_k(\la)+\ind \tP_k(\la),$
where
\begin{gather*}                                 
    \tilde M_{k}(\la):=(I-X_{k}(\la)X_{k}^{\dagg}(\la))\,\cB^{T}_{k}(\la),\quad
      \tilde T_k(\la):=I-\tilde M_k(\la)^\dagg \tilde M_k(\la), \label{bM-definition} \\
    \tilde P_k(\la):=\tilde T_{k}^{T}(\la) X_{k+1}(\la) X_{k}^{\dagg}(\la)\cB^{T}_{k}(\la)\tilde T_{k}(\la). \label{bP-definition}
  \end{gather*}
For the numbers $ m_d(Y_k(\la)),\, m_d^* (Y_k(\la))$ we have the estimates
\begin{equation}\label{eatnum}
 0 \le m_d(Y_k(\la))\le \rank \cB_k(\la)\le n,\quad 0 \le m_d^*(Y_k(\la))\le \rank \cB_k(\la)\le n.
\end{equation}
Introduce the notation
\begin{equation} \label{E:n1.def}
  l_d(Y(\la),0,N+1):=\sum_{k=0}^{N} m_d(Y_k(\la)),\quad l_d^*(Y(\la),0,N+1):=\sum_{k=0}^{N} m_d^*(Y_k(\la))
\end{equation}
for the number of forward (resp. backward) focal points of the
conjoined basis $Y(\lambda)$ in the intervals $(0,N+1]$ (resp.
$[0,N+1)$) including their multiplicities.

Recall also that the conjoined basis $Y^{[l]}_k(\la)$ with the initial condition $Y^{[l]}_l(\la) =\bnn$ for $k=l$ is called the principal solution of \eqref{Sla} at $l$ (for $l=0,\dots,N+1$).
The following important connection  for the number of focal points of the principal solutions $Y^{[0]}(\la),\,Y^{[N+1]}(\la)$ is proven in \cite[Lemma 3.3]{E-DE-2009}
\begin{equation}\label{0-N+1}
  l_d(Y^{[0]}(\la),0,N+1)=l^*_d(Y^{[N+1]}(\la),0,N+1).
\end{equation}

To formulate the main results of this paper we recall the notion of the comparative index introduced and collaborated   in \cite{E-DE-2009,E-DE-2010} (see also \cite[Chapter 3]{bookDEH}). According to
  \cite{E-DE-2009}, for two real $2n\times n$ matrices $Y$ and $\hat Y$ satisfying condition \eqref{conjoined} we define
  their {\it comparative index\/} $\mu(Y,\hat Y)$ and the {\it dual comparative index\/} $\mu^*(Y,\hat Y)$ by
  \begin{equation*}
    \mu(Y,\hat Y):=\rank\cM+\ind\cD, \quad \mu^*(Y,\hat Y):=\rank\cM+\ind(-\cD),
  \end{equation*}
  where the $n\times n$ matrices $\cM$ and $\cD$ are defined for $X:=(I\, 0)\,Y, \quad \hat X:=(I\, 0)\,\hat Y$ as
  \begin{equation*}
     \begin{array}{c}
      \cM:=(I-XX^{\dag})\,\hat X, \quad \cD:=\cT\,w^T(Y,\hat Y)\,X^{\dag}\hat X\,\cT,
       \quad \cT:=I-\cM^{\dag}\cM,
    \end{array}
  \end{equation*}
  and where $w(Y,\hat Y)$ is the Wronskian of $Y$ and $\hat Y$ given by $ w(Y,\hY)=Y^T\cJ\hY.$

 Note that the dual index can be presented in the form
\begin{equation}\label{dualrep}
 \mu^*(Y,\hY)=\mu(P_3Y,P_3\hY),\,P_3=\diag\{-I,I\},
\end{equation}
where we use the notation from the book \cite{bookDEH}.
Moreover, one can verify that for arbitrary symplectic matrix $W$
we have that $P_{3}WP_{3}$ is symplectic as well. The matrix
$P_{3}$ together with $P_{1}=\mmatrix{0&I\\I&0}$ and $P_{2}=-P_{3}$
 plays an important role in the duality principle in the comparative
index theory  for \eqref{Sla} (see  \cite[Chapter 3]{bookDEH}). We have the following estimate for $\mu(Y,\hY)$ and $\mu^*(Y,\hY)$
\begin{equation}
 \max\{ \mu(Y, \hat Y),\mu^*(Y, \hat Y)\}\le \min\,\{\rank \hX,\,\rank w(Y,\hat Y)\}\le n. \label{prop4}
\end{equation}

We also need the following representation of $m_d (Y_k(\la))$ and
$m_d^* (Y_k(\la))$ in terms of the comparative index (see
\cite[Lemmas~3.1,3.2]{E-DE-2009}). Let $Y_k(\la)$ be a conjoined basis of \eqref{Sla} associated with a symplectic fundamental matrix $Z_k(\la)$ of this system, such that $Y_k(\la)=Z_k(\la)\bnn,$ then the multiplicities of forward and backward focal points are given by
\begin{align}\label{forwind}
  m_d (Y_k(\la))=\mu(Y_{k+1}(\la),\cS_k(\la)\bnn)&=\mu^*(Z_{k+1}^{-1}(\la)\bnn,Z_{k}^{-1}(\la)\bnn),\\
\label{backwind}
  m_d^* (Y_k(\la))=\mu^*(Y_{k}(\la),\cS_k^{-1}(\la)\bnn)&=\mu(Z_{k}^{-1}(\la)\bnn,Z_{k+1}^{-1}(\la)\bnn).
\end{align}

Remark that  assumption \eqref{Smon} on the coefficient matrix
$\cS_k(\la)$ plays a crucial role in  the results of this paper.
In the following proposition we recall some properties of
$\Psi(\cdot)$ from \cite[Propositions 2.3,~3.1]{jvE.rSH18} and add
several new ones needed for the subsequent proofs.
 \begin{proposition} \label{psiZmod}
Assume that $\cS_k(\la)\in C_p^1,\,\la\in\Rbb,\,
  k\in[0,N]_\sZ$ and \eqref{dnlaosc:E:Intro.4}, \eqref{Smon} hold. Then the following assertions are true.
 \begin{enumerate}
\item[(i)] Condition \eqref{Smon} is equivalent to
    $\Psi(\cS_k^{-1}(\la))\le 0$ or
\begin{equation}\label{invdual}
  \Psi(P_3\cS_k^{-1}(\la)P_3)\ge 0,\quad P_3=\diag\{-I,I\},
\end{equation}
\item[(ii)] If  $\cZ_k(\lambda)$ for $k\in[0,N+1]_\sZ$ is
    a~symplectic fundamental matrix of system \eqref{Sla} such
  that $\cZ_0(\la)\in C_p^1,\,\la\in\Rbb$ and
    \begin{equation}\label{monZ} \Psi(\cZ_{k}(\lambda))\geq 0,
    \quad\lambda\in\mathbb{R},
  \end{equation}
  for $k=0$, then $\cZ_k(\la)\in C_p^1,\,\la\in\Rbb$ for all $k\in[0,N+1]_\sZ$ and
  condition \eqref{monZ} holds for every $k\in[0,N+1]_\sZ$.
\item[(iii)] If instead of \eqref{monZ} we have for $k=N+1$
    \begin{equation}\label{monZ*}
    \Psi(\cZ_{k}(\lambda))\leq 0, \quad\lambda\in\mathbb{R},
  \end{equation}
where a symplectic fundamental matrix $\cZ_k(\la)$ such that $\cZ_{N+1}(\la)\in C_p^1,\,\la\in\Rbb$, then condition \eqref{monZ*} holds for all
  $k\in[0,N+1]_\sZ.$ Moreover,  \eqref{monZ*} is equivalent
  to
  \begin{equation}\label{monZP*}
    \Psi(P_3\cZ_{k}(\lambda)P_3)\geq 0,\quad \lambda\in\mathbb{R},
  \end{equation}
where $P_{3}$ is defined by \eqref{invdual}.
\item[(iv)] For arbitrary symplectic  matrices $R$ and $P$
    condition \eqref{Smon} is equivalent to
\begin{equation}\label{SmonR}
    \Psi(R^{-1}\cS_k(\lambda)\,P)\ge 0,\;\la\in\bR,\,k\in[0,N]_{\bZ}.
\end{equation}
  \end{enumerate}
\end{proposition}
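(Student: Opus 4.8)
The plan is to prove Proposition~\ref{psiZmod} assertion by assertion, relying on the symmetry of $\Psi_k(\la)$ (from \cite{rSH12}), on the group structure of $\Sp$, and on the Hamiltonian interpretation \eqref{PsiHam} of $\cS_k(\la)$ as a fundamental matrix in $\la$.

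\textbf{Part (i).} First I would compute $\Psi(\cS^{-1})$ directly. Differentiating $\cS\cS^{-1}=I$ gives $\dot{(\cS^{-1})}=-\cS^{-1}\dot{\cS}\,\cS^{-1}$, so
\[
\Psi(\cS^{-1})=\cJ^T\dot{(\cS^{-1})}\cS=-\cJ^T\cS^{-1}\dot{\cS}\,\cS^{-1}\cS=-\cJ^T\cS^{-1}\dot{\cS}.
\]
Now use the symplectic identity $\cS^{-1}=\cJ^{-1}\cS^T\cJ=-\cJ\cS^T\cJ$ (from $\cS^T\cJ\cS=\cJ$), which turns the right-hand side into $\cJ^T\cJ\cS^T\cJ\dot{\cS}=\cS^T\cJ\dot{\cS}$. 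Comparing with $\Psi(\cS)=\cJ^T\dot{\cS}\cS^{-1}$ and again using $\cS^{-1}=-\cJ\cS^T\cJ$ one finds $\Psi(\cS^{-1})=-\cS^T\,\Psi(\cS)\,\cS$ (a congruence), so $\Psi(\cS)\ge0\iff\Psi(\cS^{-1})\le0$. For the second equivalence, observe $P_3\cS^{-1}P_3$ is symplectic (as noted after \eqref{dualrep}, since $P_3\in\Sp$ and $P_3^2=I$), and a short calculation using $\dot{(P_3\cS^{-1}P_3)}=P_3\dot{(\cS^{-1})}P_3$ together with $P_3\cJ P_3=-\cJ$ gives $\Psi(P_3\cS^{-1}P_3)=P_3\,\bigl(-\Psi(\cS^{-1})\bigr)P_3$ up to a congruence, hence $\Psi(\cS^{-1})\le0\iff\Psi(P_3\cS^{-1}P_3)\ge0$.

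\textbf{Parts (ii) and (iii).} Here the key is a transport/cocycle formula for $\Psi$ along the system \eqref{Sla}. Write $\cZ_{k+1}(\la)=\cS_k(\la)\cZ_k(\la)$; differentiating in $\la$ and feeding into the definition of $\Psi$, one obtains an identity of the shape
\[
\Psi(\cZ_{k+1}(\la))=\Psi(\cS_k(\la))+\bigl(\text{congruence of }\Psi(\cZ_k(\la))\text{ by }\cS_k(\la)\bigr),
\]
more precisely $\Psi(\cZ_{k+1})=\Psi(\cS_k)+\widehat{\cS}_k^{-T}\Psi(\cZ_k)\widehat{\cS}_k^{-1}$ for a suitable symplectic factor built from $\cS_k$ (I would verify this using $\cJ^T\dot{\cS}_k\cS_k^{-1}=\Psi_k$ and $\cS_k^T\cJ\cS_k=\cJ$). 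Since \eqref{Smon} gives $\Psi(\cS_k)\ge0$ and a congruence preserves $\ge0$, an induction on $k$ from the base case $\Psi(\cZ_0)\ge0$ yields $\Psi(\cZ_k)\ge0$ for all $k\in[0,N+1]_\sZ$; the $C_p^1$ regularity propagates because $\cS_k(\la)\in C_p^1$ and products/inverses of $C_p^1$ matrices are $C_p^1$. Part (iii) is the dual statement: run the recursion backward from $k=N+1$, using $\Psi(\cS_k)\ge0$ and the sign-reversed base case $\Psi(\cZ_{N+1})\le0$ to propagate $\Psi(\cZ_k)\le0$ downward; the equivalence with \eqref{monZP*} is exactly the $P_3$-conjugation argument already used in part (i), since $\Psi(P_3\cZ_kP_3)\ge0\iff\Psi(\cZ_k)\le0$.

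\textbf{Part (iv).} Since $R,P$ do not depend on $\la$, we have $\dot{\overline{R^{-1}\cS_kP}}=R^{-1}\dot{\cS}_kP$, and then
\[
\Psi(R^{-1}\cS_kP)=\cJ^T R^{-1}\dot{\cS}_kP\,P^{-1}\cS_k^{-1}R=\cJ^T R^{-1}\dot{\cS}_k\cS_k^{-1}R.
\]
Using $R^{-1}=-\cJ R^T\cJ$ (symplecticity of $R$) one converts $\cJ^T R^{-1}=R^T\cJ^T$ up to the appropriate sign bookkeeping, landing on $\Psi(R^{-1}\cS_kP)=R^T\,\Psi(\cS_k)\,R^{-T}$, a congruence; hence \eqref{Smon}$\iff$\eqref{SmonR}. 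The only mild subtlety is keeping track of the sign in $\cJ R^T\cJ=-R^{-1}$ versus $R^T\cJ R=\cJ$, but this is routine.

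\textbf{Main obstacle.} I expect the one genuinely fiddly point to be getting the transport identity in (ii) in a clean congruence form — i.e.\ checking that the cross-term really does combine into $\Psi(\cS_k)$ plus a \emph{nonnegative} congruence of $\Psi(\cZ_k)$, with the symplectic conjugating factor correctly identified, rather than some expression whose sign is unclear. Everything else reduces to repeated use of $\cS^{-1}=-\cJ\cS^T\cJ$ and $P_3\cJ P_3=-\cJ$ together with the symmetry of $\Psi$.
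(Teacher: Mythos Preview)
Your approach is correct and coincides with the paper's: the paper cites \cite[Propositions~2.3,~3.1]{jvE.rSH18} for (i), (ii), (iv) and supplies exactly the product formula $\Psi(\cS_k^{-1}\cZ_{k+1})=\Psi(\cS_k^{-1})+\cS_k^T\Psi(\cZ_{k+1})\cS_k$ together with the identity $\Psi(P_3WP_3)=-P_3\Psi(W)P_3$ that you reconstruct from scratch. One small slip: in (iv) the congruence should read $\Psi(R^{-1}\cS_kP)=R^T\,\Psi(\cS_k)\,R$ (not $R^{-T}$ on the right); since you already label it a congruence, the conclusion is unaffected.
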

\begin{proof}
 The equivalence of \eqref{Smon} and $\Psi(\cS_k^{-1}(\la))\le 0$
is proved in \cite[Proposition 2.3~(iv)]{jvE.rSH18} while
\eqref{invdual} is equivalent to $\Psi(\cS_k^{-1}(\la))\le 0$
because of the relation
$\Psi(P_3\cS_k^{-1}(\la)P_3)=-P_3\Psi(\cS_k^{-1}(\la))P_3\ge 0$ which
can be easily  verified by direct computations.

The main statement in (ii) is proved in \cite[Proposition
3.1]{jvE.rSH18}. The equivalence of \eqref{monZ*} and
\eqref{monZP*} can be proved similarly to (i).

The proof of \eqref{monZ*} for all $k\le N+1$ provided condition
\eqref{monZ*} holds for $k=N+1$ follows from the relation (see
\cite[Proposition 2.3 (i)]{jvE.rSH18})
$$\Psi(\cZ_k(\la))=\Psi(\cS_k^{-1}(\la)\cZ_{k+1}(\la))=\Psi(\cS_k^{-1}(\la))+\cS_k^T(\la)\Psi(\cZ_{k+1}(\la))\cS_k(\la)$$ by
induction using Proposition~\ref{psiZmod}(i).

The equivalence of \eqref{Smon} and \eqref{SmonR} was proved in
 \cite[Proposition 2.3(iv)]{jvE.rSH18}.
The proof is completed.
\end{proof}
 \begin{remark}\label{constInitConj}
\par(i) In particular, condition \eqref{monZ} holds  for the symplectic fundamental matrix $Z_k^{[0]}(\la)$
with the initial condition $Z_0^{[0]}(\la)=I.$ Recall that in this
case $Z_k^{[0]}(\la)\bnn=Y_k^{[0]}(\la),$ where $Y_k^{[0]}(\la)$ is
the principal solution of \eqref{Sla} at $k=0$. By a similar way we
will use condition \eqref{monZP*} for the symplectic fundamental matrix
$Z_k^{[N+1]}(\la),\,Z_{N+1}^{[N+1]}(\la)=I$ associated with the
principal solution $Y_k^{[N+1]}(\la)$ at $N+1.$
\par(ii) It was proved in \cite{jE15}, \cite[Theorem~2.4]{jvE.rSH18}, \cite[Theorem~5.1]{bookDEH} that the monotonicity assumption \eqref{Smon}  implies that for the block $\cB_k(\la)$ of
$\cS_k(\la)$ the sets $\Ker{\cB}_k(\la),\,\Imm{\cB}_k(\la)$ are
piecewise constant with respect to $\la\in \bR$. By
Proposition~\ref{psiZmod}(ii),(iii) (see also \cite{wK.rSH13}, \cite[Corollary
3.2]{jvE.rSH18}, \cite[Theorem~5.3]{bookDEH})  the same property also holds for the blocks
$X_k^{[l]}(\la)$ of the principal solutions
$Y_k^{[l]}(\la),\,l\in\{0,\,N+1\}$ of \eqref{Sla}, i.e.,
$\Ker{X}_k^{[l]}(\la),\,\Imm X_k^{[l]}(\la)$ are piecewise constant
with respect to $\la\in \bR.$ Moreover, by
Proposition~\ref{psiZmod}(iv) a similar property also holds for
blocks of the transformed matrices $R^{-1}\cS_k(\lambda)\,P$ and
$R^{-1}Z_k^{[l]}(\lambda)P,\,l \in\{0,\,N+1\}$ where $R$ and $P$
are arbitrary   symplectic matrices.
\par(iii) The renormalized oscillation theory is closely related to
the transformation theory \cite{mB.oD97},
\cite[Section~4.4]{bookDEH} of \eqref{Sla}. Instead of \eqref{Sla}
consider the transformed symplectic system
\begin{equation*} \label{tSla}\begin{array}{l}
    \ty_{k+1}(\la) =\tS_k(\la)\ty_k(\la)\quad k\in[0,N]_\sZ, \quad \tS_k(\la)=\smat{\tcA_k(\la)}{\tcB_k(\la)}{\tcC_k(\la)}{\tcD_k(\la)}\tag{TS$_\la$}
\end{array}\end{equation*}
derived from \eqref{Sla} using the transformation
$\ty_k(\la)=R^{-1}_{k}y_k(\la).$
Here $R_{k}\in \Sp$  for $k\in[0,N+1]_{\sZ}$ and does not
depend on $\la.$ Then, the coefficient matrix $\tS_k(\la)$ of system \eqref{tSla}  is symplectic
and according to Proposition~\ref{psiZmod}(iv) obeys the
monotonicity condition
\begin{equation}\label{tSmon}
    \Psi(\tS_{k}(\la))\ge 0,\quad \tS_{k}(\la)=R_{k+1}^{-1}\cS_{k}(\la)R_{k},\,k\in[0,N]_{\sZ},\,\la\in\bR.
\end{equation}
By  Proposition~\ref{psiZmod}(ii)--(iv) we also have the monotonicity
conditions for the transformed fundamental matrices
$R^{-1}_{k}Z_{k}^{[0]}(\la)$ and $R^{-1}_{k}Z_{k}^{[N+1]}(\la),$
i.e.,
\begin{equation*}\label{tZR}
\Psi(R^{-1}_{k}Z_{k}^{[0]}(\la))\ge 0,\quad \Psi(P_{3}R^{-1}_{k}Z_{k}^{[N+1]}(\la)P_{3})\ge 0,\,k\in[0,N+1]_{\sZ},\,\la\in\bR.
\end{equation*}
In particular, taking the transformation matrix $R_k$ in form
$R_k:=Z^{[N+1]}_k(\beta)$ or $R_k:=Z^{[0]}_k(\beta)$ for
$\beta\in\{a,b\},$ where $a,\,b\in\bR$ are fixed values of $\la$ we
investigate oscillations of the Wronskians
\begin{equation*} \label{wrons-trans}
   \begin{aligned}
    w(Y_k^{[N+1]}(\beta),Y_k^{[0]}(\la))=Y_k^{[N+1]\;T}(\beta)
    \cJ Y_k^{[0]}(\la)&=-(I\;0)\,(Z^{[N+1]}_k(\beta))^{-1}\,Y_k^{[0]}(\la), \\
    w(Y_k^{[0]}(\beta),Y_k^{[N+1]}(\la))=Y_k^{[0]\;T}(\beta)\cJ Y_k^{[N+1]}(\la)&=-(I\;0)\,(Z^{[0]}_k(\beta))^{-1}\,Y_k^{[N+1]}(\la)
     \end{aligned}
\end{equation*}
associated with the upper blocks of $(Z^{[N+1]}_k(\beta))^{-1}\,Y_k^{[0]}(\la)$ or $(Z^{[0]}_k(\beta))^{-1}\,Y_k^{[N+1]}(\la).$
 \end{remark}
Based on Remark~\ref{constInitConj}(i),(ii) one can  define  the
notion of a~finite eigenvalue of problem \eqref{Sla},\eqref{E0}, see
\cite[Definition~4.4]{rSH12}.
\begin{definition} \label{dnlaosc:D:finite.eigenvalue}
  Assume \eqref{dnlaosc:E:Intro.4}, \eqref{Smon} and consider the principal solution $Y_{k}^{[0]}(\la)$   of \eqref{Sla} at $k=0.$ Then, a~number $\la_0\in\Rbb$ is a~{\em finite eigenvalue\/} of problem \eqref{Sla},\eqref{E0} if
  \begin{equation*} \label{dnlaosc:E:theta.def}
    \theta(\la_0):=\rank X^{[0]}_{N+1}(\la_0^-)-\rank X^{[0]}_{N+1}(\la_0)\geq1.
  \end{equation*}
In this case the number $\theta(\la_0)$ is called the {\em algebraic multiplicity\/} of $\la_0$.
\end{definition}
It follows from Definition~\ref{dnlaosc:D:finite.eigenvalue} that
under \eqref{dnlaosc:E:Intro.4} and \eqref{Smon} the finite
eigenvalues of \eqref{Sla},\eqref{E0} are isolated.

By a similar way, Remark~\ref{constInitConj}(ii) implies that
 the quantities $\rank
\cB_k(\la)$ are constant on some left and right neighborhoods of
$\la_0$. Here $\cB_k(\la)$ is the block of $\cS_k(\la)$ given by
\eqref{dnlaosc:E:Intro.4}. Then we introduce the notation \eqref{jump} (see Section~\ref{sec1})
to describe jumps of $\rank \cB_k(\la)$ in the left neighborhood of $\la_0.$

 Using these notions we recall the global oscillation theorem for problem \eqref{Sla},\eqref{E0} for the case when
 the block $\cB_k(\la)$ of the matrix $\cS_k(\la)$  has nonconstant rank with respect to $\la$
 (see \cite[formula (2.14) and Theorem 2.7]{jE15}).
\begin{theorem}\label{globosc}
For problem \eqref{Sla},\eqref{E0} under assumptions \eqref{dnlaosc:E:Intro.4}
and \eqref{Smon} we have  formula \eqref{eig.in.int2}
presenting the number $\#\{\nu\in\sigma |a <\nu\le b\}$ of finite
eigenvalues in $(a,b].$

Moreover, under the assumption \eqref{finrank} the spectrum
$\sigma$ of problem \eqref{Sla},\eqref{E0} is bounded from below,
i.e., there exists $\la_{00}\in\bR$ such that
\begin{equation}\label{finspectr}
 \rank X_{N+1}^{[0]}(\lambda)=\rank X_{N+1}^{[0]}(\lambda^{-})\; \text{for all}\;\lambda<\lambda_{00},
\end{equation}
and
\begin{equation}\label{eig.glob}
    l_d(Y^{[0]}(b),0,N+1)-m+\sum\limits_{\nu\le b}\sum\limits_{k=0}^{N}\vartheta_{k}(\nu)=\#\{\nu\in\sigma |\nu\le b\},
  \end{equation}
where the sums $\sum\limits_{\nu\le b}\sum\limits_{k=0}^{N}\vartheta_{k}(\nu)$ and $\#\{\nu\in\sigma |\nu\le b\}:=\sum\limits_{\nu\le b}\theta(\nu)$ are finite and the constant $m$ is given by
\begin{equation}\label{m}
  m=l_d(Y^{[0]}(\la),0,N+1),\,\la<\min\{\la_0,\la_{00}\}.
\end{equation}
\end{theorem}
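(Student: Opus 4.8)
The statement is recalled from \cite[formula~(2.14) and Theorem~2.7]{jE15}; the route I would take to it from the material above is the following.

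\emph{Step~1 (reduction to a local statement).} First I would show that the map $\la\mapsto l_d(Y^{[0]}(\la),0,N+1)$ is right-continuous and piecewise constant on $\bR$, its only possible jumps occurring at the finite eigenvalues of \eqref{Sla},\eqref{E0} and at the (isolated) points where some $\rank\cB_k(\la)$ drops from the left. Indeed, by Remark~\ref{constInitConj}(ii) the subspaces $\Ker\cB_k(\la)$, $\Imm\cB_k(\la)$ and $\Imm X_{k+1}^{[0]}(\la)$ are piecewise constant in $\la$, and in particular constant on a right neighbourhood of every point; combined with the continuity of $\cS_k(\la)$ this makes each multiplicity $m_d(Y_k^{[0]}(\la))$, hence $l_d(Y^{[0]}(\la),0,N+1)$, locally constant off the special points and right-continuous everywhere. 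Since finite eigenvalues are isolated (Definition~\ref{dnlaosc:D:finite.eigenvalue} and the remark after it) and, by the same piecewise constancy, so are the rank-jump points of the blocks $\cB_k$, only finitely many special points lie in any bounded $\la$-interval.

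\emph{Step~2 (the local jump formula --- the main obstacle).} The heart of the proof is the identity
\[
  l_d(Y^{[0]}(\la_0),0,N+1)-l_d(Y^{[0]}(\la_0^-),0,N+1)=\theta(\la_0)-\sum_{k=0}^{N}\vartheta_k(\la_0),\qquad\la_0\in\bR .
\]
I would derive it from the comparative-index representation \eqref{forwind}, $m_d(Y_k^{[0]}(\la))=\mu^*((Z_{k+1}^{[0]}(\la))^{-1}\bnn,(Z_k^{[0]}(\la))^{-1}\bnn)$, together with the monotonicity $\Psi(Z_k^{[0]}(\la))\ge 0$, valid for every $k$ by Proposition~\ref{psiZmod}(ii). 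The point is that for each fixed $k$ the symplectic matrix $\cS_k(\la)$, and likewise $Z_k^{[0]}(\la)$, is the fundamental matrix of the continuous linear Hamiltonian system \eqref{PsiHam} in the variable $\la$, so that the left-hand jump at $\la_0$ of the comparative index of a \emph{monotone} $\la$-dependent pair of conjoined bases is governed by the rank drops of the pertinent $X$-blocks. In these terms the drop of $\rank X_{N+1}^{[0]}(\la)$ --- which is the algebraic multiplicity $\theta(\la_0)$, the proper-focal-point multiplicity of $Y_{N+1}^{[0]}(\la)=Z_{N+1}^{[0]}(\la)\bnn$ for \eqref{PsiHam} --- enters with a plus sign, whereas each drop of $\rank\cB_k(\la)$ --- the proper-focal-point multiplicity \eqref{jump} of $\cS_k(\la)\bnn$ --- enters with a minus sign. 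I expect the genuinely delicate point to be exactly this bookkeeping: proving that all the rank jumps over $k=0,\dots,N$ combine with the asserted signs and telescope to precisely $\theta(\la_0)-\sum_k\vartheta_k(\la_0)$ with no residual cross terms --- this is where the interplay between the discrete index $k$ and the continuous parameter $\la$ has to be handled with care.

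\emph{Step~3 (telescoping).} Summing the identity of Step~2 over the finitely many special points $\nu\in(a,b]$ and using $\sum_{a<\nu\le b}\theta(\nu)=\#\{\nu\in\sigma\mid a<\nu\le b\}$ gives
\[
  l_d(Y^{[0]}(b),0,N+1)-l_d(Y^{[0]}(a),0,N+1)=\#\{\nu\in\sigma\mid a<\nu\le b\}-\sum_{a<\nu\le b}\sum_{k=0}^{N}\vartheta_k(\nu),
\]
which is \eqref{eig.in.int2} after rearrangement.

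\emph{Step~4 (the lower-bounded case).} Assume \eqref{finrank} holds for all $\la\le\la_0$. By \cite[Corollary~2.5]{jE15} the spectrum $\sigma$ is then bounded from below, so there is $\la_{00}\in\bR$ with \eqref{finspectr}; equivalently, \eqref{Sla},\eqref{E0} has no finite eigenvalue below $\la_{00}$. Consequently, for $\la<\min\{\la_0,\la_{00}\}$ there is no special point at all --- no finite eigenvalue by the choice of $\la_{00}$, no left rank drop of the blocks $\cB_k$ by the choice of $\la_0$ --- so by Step~1 the right-continuous piecewise-constant function $l_d(Y^{[0]}(\la),0,N+1)$ is constant on $(-\infty,\min\{\la_0,\la_{00}\})$; this constant is the number $m$ in \eqref{m}. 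Choosing $a<\min\{\la_0,\la_{00}\}$ in \eqref{eig.in.int2}, we may replace $\sum_{a<\nu\le b}$ by $\sum_{\nu\le b}$ and $\#\{\nu\in\sigma\mid a<\nu\le b\}$ by $\#\{\nu\in\sigma\mid\nu\le b\}$, both now ranging only over the finitely many special points in $[\min\{\la_0,\la_{00}\},b]$; this yields \eqref{eig.glob}, and the finiteness of the two sums in the statement follows from the same isolatedness facts together with the finiteness of $\la_0$ and $\la_{00}$.
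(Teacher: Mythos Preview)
Your plan is sound and essentially reproduces the original argument from \cite{jE15} that the paper is citing; in particular your Step~2 local jump identity is precisely what \cite[Theorem~2.7]{jE15} establishes via the index theorem for monotone matrix-valued functions, and Steps~3--4 are the standard telescoping/limiting arguments there.

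The paper itself, however, offers a different and rather cleaner route in Remark~\ref{permutorder}: Theorem~\ref{globosc} is obtained as an immediate specialization of Theorem~\ref{forward} by taking $W_k(\la):=\cS_k(\la)$ and $\cY_k(\la):=Y_k^{[0]}(\la)$. The point is that identity \eqref{loccon},
\[
  m_d(\cY_k(a))-m_d(\cY_k(b))=-\Delta l_c(\cY_k,a,b)+l_c(W_k\bnn,a,b),
\]
is exactly your ``delicate bookkeeping'' of Step~2, already summed over $(a,b]$ rather than localized at a single $\la_0$; its proof is a one-line application of the continuous comparison Theorem~\ref{maincompar} with $\hat Z(\la):=W_k(\la)$ and $Z(\la):=\cZ_{k+1}(\la)$. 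Summing \eqref{loccon} over $k$ gives \eqref{locglob}, and with $l_c(Y_0^{[0]},a,b)=0$, $l_c(Y_{N+1}^{[0]},a,b)=\#\{\nu\in\sigma\mid a<\nu\le b\}$ by \eqref{confineigproper}, and $l_c(\cS_k\bnn,a,b)=\sum_{a<\nu\le b}\vartheta_k(\nu)$ by \eqref{conBproper}, one obtains \eqref{eig.in.int2} directly. So where you would analyse left-hand rank drops of $X_{k+1}^{[0]}(\la)$ and $\cB_k(\la)$ term by term and worry about cross terms, the paper packages all of this into the continuous-Hamiltonian machinery of Section~\ref{sec3}: the monotonicity $\Psi(\cZ_k(\la))\ge0$ feeds into Theorem~\ref{maincompar}, and the comparative-index difference on its right-hand side is identified with $m_d(\cY_k(\la))$ via \eqref{forwind}. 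Your approach buys a self-contained discrete argument; the paper's buys uniformity (the same Theorem~\ref{forward} then yields Theorems~\ref{newmay2019} and~\ref{globoscrel1} by merely varying the transformation $R_k$).
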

Theorem~\ref{globosc}  can be rewritten in terms of the principal
solution $Y_k^{[N+1]}(\la)$ at $k=N+1.$
\begin{theorem}\label{globosc*}
For problem \eqref{Sla},\eqref{E0} under assumptions \eqref{dnlaosc:E:Intro.4}
and \eqref{Smon} we have the following formula
  \begin{equation}\label{eig.in.int2*}
    l_d^{*}(Y^{[N+1]}(b),0,N+1)-l_d^{*}(Y^{[N+1]}(a),0,N+1)+\sum\limits_{a<\nu\le b}\sum\limits_{k=0}^{N}\vartheta_{k}(\nu)=\#\{\nu\in\sigma |a <\nu\le b\},
  \end{equation}
connecting the number $\#\{\nu\in\sigma |a <\nu\le b\}$ of finite
eigenvalues in $(a,b]$ with the number
$l_d^{*}(Y^{[N+1]}(\la),0,N)$ of backward focal points of the
principal solution  of \eqref{Sla} at $k=N+1$.

Moreover, under the assumption \eqref{finrank} we have
\begin{equation}\label{eig.glob*}
    l_d^{*}(Y^{[N+1]}(b),0,N+1)-m^{*}+\sum\limits_{\nu\le b}\sum\limits_{k=0}^{N}\vartheta_{k}(\nu)=\#\{\nu\in\sigma |\nu\le b\},
  \end{equation}
where the constant $m^{*}$ is given by
\begin{equation}\label{m*}
  m^{*}=l_d^{*}(Y^{[N+1]}(\la),0,N+1)=m=l_d(Y^{[0]}(\la),0,N+1),\,\la<\min\{\la_0,\la_{00}\}.
\end{equation}
\end{theorem}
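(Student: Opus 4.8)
The plan is to derive Theorem~\ref{globosc*} directly from Theorem~\ref{globosc} by exploiting the duality between forward focal points of the principal solution $Y^{[0]}(\la)$ at $k=0$ and backward focal points of the principal solution $Y^{[N+1]}(\la)$ at $k=N+1$. The crucial fact is the identity \eqref{0-N+1}, namely $l_d(Y^{[0]}(\la),0,N+1)=l_d^*(Y^{[N+1]}(\la),0,N+1)$, which holds for every $\la\in\bR$ under \eqref{dnlaosc:E:Intro.4} and \eqref{Smon}. Substituting this into the left-hand side of \eqref{eig.in.int2} for both $\la=b$ and $\la=a$ immediately converts the first difference into $l_d^*(Y^{[N+1]}(b),0,N+1)-l_d^*(Y^{[N+1]}(a),0,N+1)$, while the double sum $\sum_{a<\nu\le b}\sum_{k=0}^N\vartheta_k(\nu)$ and the right-hand side $\#\{\nu\in\sigma\,|\,a<\nu\le b\}$ are unchanged, since the numbers $\vartheta_k(\nu)$ in \eqref{jump} and the notion of finite eigenvalue in Definition~\ref{dnlaosc:D:finite.eigenvalue} depend only on the coefficient matrix $\cS_k(\la)$, not on which principal solution is used. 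This yields \eqref{eig.in.int2*}.

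For the second part, I would argue that under assumption \eqref{finrank} the passage to the limit $\la\to-\infty$ is justified exactly as in Theorem~\ref{globosc}. By \cite[Corollary~2.5]{jE15} and Theorem~\ref{globosc}, condition \eqref{finrank} is equivalent to the spectrum being bounded from below, giving \eqref{finspectr} and the finiteness of $\sum_{\nu\le b}\sum_{k=0}^N\vartheta_k(\nu)$ and $\#\{\nu\in\sigma\,|\,\nu\le b\}$. Moreover, by Remark~\ref{constInitConj}(ii) the sets $\Ker X_k^{[N+1]}(\la)$, $\Imm X_k^{[N+1]}(\la)$ and $\rank\cB_k(\la)$ are piecewise constant in $\la$, hence the quantities $m_d^*(Y_k^{[N+1]}(\la))$ are constant for $\la$ sufficiently negative; call this common value $m^*$. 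Applying the just-proven \eqref{eig.in.int2*} with $b$ fixed and $a=\la<\min\{\la_0,\la_{00}\}$, and noting that the left endpoint contributes only $l_d^*(Y^{[N+1]}(\la),0,N+1)=m^*$ and picks up no further eigenvalues or $\vartheta$-terms below $\la$, produces \eqref{eig.glob*}. The chain of equalities in \eqref{m*}, i.e. $m^*=m$, is then immediate from \eqref{0-N+1} evaluated at any $\la<\min\{\la_0,\la_{00}\}$.

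The only point requiring slight care — and the main (minor) obstacle — is verifying that \eqref{0-N+1} is legitimately applicable at \emph{both} endpoints $a$ and $b$ simultaneously with the \emph{same} coefficient matrix, i.e. that the identity is pointwise in $\la$; this is exactly the content of \cite[Lemma~3.3]{E-DE-2009} as quoted, so no new work is needed. A secondary point is confirming that the constant $m^*$ obtained from the backward focal points of $Y^{[N+1]}$ for $\la\to-\infty$ coincides with the constant $m$ from \eqref{m}: again this follows from \eqref{0-N+1} together with the piecewise-constancy established in Remark~\ref{constInitConj}(ii), so the proof reduces to assembling these ingredients. No genuinely new analytic estimate is required; the theorem is a dual reformulation of Theorem~\ref{globosc}.
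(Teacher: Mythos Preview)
Your proposal is correct and follows essentially the same route as the paper: the paper's proof is a one-line application of identity \eqref{0-N+1}, substituting $l_d^*(Y^{[N+1]}(\la),0,N+1)$ for $l_d(Y^{[0]}(\la),0,N+1)$ directly in \eqref{eig.in.int2}, \eqref{eig.glob}, and \eqref{m} to obtain \eqref{eig.in.int2*}, \eqref{eig.glob*}, and \eqref{m*}. Your additional remarks on piecewise constancy and the limit $\la\to-\infty$ are more detailed than what the paper writes out, but they merely make explicit what is already implicit in the invocation of Theorem~\ref{globosc}.
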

\begin{proof}
Indeed, using relation \eqref{0-N+1} and replacing
$l(Y^{[0]}(\la),0,N+1)$ by $l^*(Y^{[N+1]}(\la),0,N+1)$ in
\eqref{eig.in.int2}, \eqref{eig.glob}, and \eqref{m} we derive
\eqref{eig.in.int2*}, \eqref{eig.glob*}, and \eqref{m*}.
\end{proof}

The main purpose of this paper is to present  renormalized versions
of Theorems~\ref{globosc},~\ref{globosc*}.
\section{Oscillation theory for differential Hamiltonian systems and renormalized oscillation theorems}\label{sec3}
Recall now some basic concepts of the oscillation theory of the
differential Hamiltonian systems \begin{equation}\label{Ham1}
     y'(t)=\cJ\cH(t)y(t),\,\cH(t)=\begin{pmatrix}
                           -C(t) & A^T(t) \\
                           A(t) & B(t) \\
                         \end{pmatrix},\,\cH(t)=\cH^T(t),
\end{equation}
where we assume that $\cH(t)$ is piecewise continuous with respect to $t\in[a,b]$ and the following \textit{Legendre} condition
\begin{equation}\label{B}
  B(t)\ge 0,\,t\in[a,b].
\end{equation}
Remark that the definition of a conjoined basis and the principal solution of \eqref{Ham1} can be introduced by analogy with the discrete case (see Section~\ref{sec2}).
\begin{definition}\label{multcon}
Assume \eqref{B}. We say (see \cite{Kratz-Analysis}) that
a point $t_0 \in (a, b]$ is a (left) proper focal
point of a conjoined basis $Y(t)=\bn{X(t)}{ U(t )}$ of \eqref{Ham1}, provided
\begin{equation}\label{propfocf}
    m_c(Y(t_0))=\deft X(t_0)-\deft X(t_0^-)=\rank X(t_0^-)-\rank X(t_0) \ge 1
\end{equation}
and $ m_c(Y(t_0))$ is its multiplicity.
\end{definition}
 Introduce the notation
\begin{equation}\label{notatfoc}
    l_c(Y,a,b)=\sum \limits_{\tau\in (a,b]} m_c(Y(\tau))
\end{equation}
for the total number of proper focal points (including the
multiplicities \eqref{propfocf}) in $(a,b].$ Recall that by the
Legendre condition $B(t)\ge 0$ all proper focal points are isolated
(see \cite[Theorem 3]{Kratz-Analysis}) then the sum
\eqref{notatfoc} is well-defined. Moreover, system \eqref{Ham1} is
called \textit{nonoscillatory} on $(-\infty,b]$  if for every
conjoined basis $Y(t)$ of \eqref{Ham1} the number of focal points
on $(-\infty,b]$ is finite (see \cite[Definition 2.3. and Theorem
2.2]{Hilsch4}, where we replace $\infty$ by $-\infty$).

Based on Definition~\ref{multcon} and
Remark~\ref{constInitConj}(ii) one can consider the numbers
\eqref{jump}
  as the multiplicities of
left proper focal points  of the conjoined basis $\cS_k(\la)\bnn=\binom{\cB_k(\la)}{\cD_k(\la)}$ of the Hamiltonian
differential system \eqref{PsiHam}, i.e., for any $a<b,\,a,b\in\bR$
\begin{equation} \label{conBproper}
    \vartheta_k(\la)=m_c(\cS_k(\la)\bnn),\quad \sum\limits_{a<\nu\le b}\vartheta_{k}(\nu)=l_c(\cS_k\bnn,a,b).
  \end{equation}
 Moreover, under
assumption \eqref{finrank} we have that system \eqref{PsiHam} is
nonoscillatory as $\la \rightarrow -\infty.$

By a similar way (see Remark~\ref{constInitConj}) one can connect
the notion of the algebraic multiplicity of finite eigenvalues of
\eqref{Sla},\eqref{E0} with the notion of the multiplicity of proper focal
points of the principal solution $Y_{N+1}^{[0]}(\la)$ of
\eqref{Sla}  considered as a function of $\la,$ i.e., we have
under \eqref{dnlaosc:E:Intro.4} and \eqref{Smon} that
  \begin{equation} \label{confineigproper}
    \theta(\la)=m_c(Y_{N+1}^{[0]}(\la)),\quad \#\{\nu\in\sigma |a <\nu\le b\}=l_c(Y_{N+1}^{[0]},a,b).
  \end{equation}

The main result in \cite[Theorem 2.2]{jE16}  connects the
multiplicities of proper focal points of conjoined bases of two
Hamiltonian systems under a majorant condition for their
Hamiltonians and under the Legendre condition assumed for one of
these systems. Here we reformulate this result in a new notation
convenient for the results of this paper.

\begin{theorem}[Comparison theorem for the continuous case]\label{maincompar}
Assume that
\begin{equation}\label{sympmatr}
    Y(\la):=Z(\la)\bnn,\,\hY(\la)=\hZ(\la)\bnn,
\end{equation}
where symplectic matrices
$Z(\la),\,\hZ(\la)\in C_p^1$ obey the conditions
\begin{equation}\label{majham}\begin{array}{c}
  \Psi(\hZ^{-1}(\la)Z(\la))=\hZ^T(\la) (\Psi(Z(\la)) -\Psi(\hZ(\la)))\hZ(\la) \ge 0,\\\phi(\hZ(\la)):=\bnn\Psi(\hZ(\la))\bnn \ge 0,\,\la\in[a,b].\end{array}
\end{equation}
Then for $\tY(\la):=\hZ^{-1}(\la)Y(\la)$ we have
\begin{equation}\label{elyscond51glob}
l_c(\hY,a,b)- l_c(Y,a,b)+l_c(\tY,a,b)=\mu(Y(a),\hat Y(a))-\mu(Y(b),\hat Y(b)).
\end{equation}
\end{theorem}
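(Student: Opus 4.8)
The plan is to reduce the comparison identity \eqref{elyscond51glob} to an accounting of how the comparative index $\mu(Y(\la),\hat Y(\la))$ changes as $\la$ runs over $[a,b]$, together with a local analysis of the jumps of the three focal-point counts. First I would observe that, by the hypotheses \eqref{majham} and Proposition~\ref{psiZmod}(iv) together with the continuous analogue of the monotonicity propagation used in the discrete setting, all three of $\Psi(Z(\la))$, $\Psi(\hZ(\la))$ and $\Psi(\hZ^{-1}(\la)Z(\la))$ are of the correct sign, so that $X(\la)$, $\hat X(\la)$ and $\tilde X(\la):=(I\ 0)\hZ^{-1}(\la)Y(\la)$ all have piecewise constant kernel and image (this is the continuous counterpart of Remark~\ref{constInitConj}(ii), and is exactly what is needed for the three numbers $l_c(Y,a,b)$, $l_c(\hY,a,b)$, $l_c(\tY,a,b)$ in \eqref{notatfoc} to be finite and for $\mu(Y(\la),\hat Y(\la))$ to be piecewise constant with isolated jumps). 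Hence both sides of \eqref{elyscond51glob} are finite sums of local contributions, and it suffices to prove the identity locally: at each point $t_0\in(a,b]$,
\begin{equation*}
  m_c(\hat Y(t_0)) - m_c(Y(t_0)) + m_c(\tilde Y(t_0)) = \mu(Y(t_0^-),\hat Y(t_0^-)) - \mu(Y(t_0),\hat Y(t_0)),
\end{equation*}
and that $\mu(Y(\la),\hat Y(\la))$ is constant and equal to its right limit on any subinterval free of focal points of $Y$, $\hat Y$, $\tilde Y$ (so that the jumps at the endpoints telescope correctly, with the contribution at $a$ coming out as $\mu(Y(a),\hat Y(a))$ via the right-continuity of $\mu$ and the fact that $m_c$ counts only left focal points on $(a,b]$).

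The second and main step is the local jump formula. Here I would use the comparative-index machinery of \cite[Chapter~3]{bookDEH}, in particular the relation between $\mu(Y,\hat Y)$ evaluated along a fixed fundamental matrix and the multiplicities of focal points, exactly as in \eqref{forwind}, \eqref{backwind} but in the continuous setting. Writing $\tilde Y(\la)=\hZ^{-1}(\la)Y(\la)$, the key algebraic identity is that $w(Y(\la),\hat Y(\la)) = -(I\ 0)\hZ^{-1}(\la)Y(\la) = -\tilde X(\la)$, so the Wronskian entering the definition of $\cD$ in $\mu(Y(\la),\hat Y(\la))$ is (up to sign) the upper block of $\tilde Y(\la)$; the jump of $\mu(Y(\la),\hat Y(\la))$ at $t_0$ is therefore governed jointly by the jumps of $\rank X(\la)$, $\rank\hat X(\la)$ and $\rank\tilde X(\la)$ at $t_0$, which are precisely $-m_c(Y(t_0))$, $-m_c(\hat Y(t_0))$, $-m_c(\tilde Y(t_0))$ by \eqref{propfocf}. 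Assembling these via the index-additivity properties of $\mu$ under the sign conditions \eqref{majham} — this is the content of \cite[Theorem~2.2]{jE16}, which the paper allows us to cite — yields the local formula. I expect the main obstacle to be the bookkeeping when several of these jumps occur simultaneously at the same $t_0$: one must carefully track the contribution of the abnormal part (the $\ind$-terms, analogues of $\ind P_k$ and $\ind\cD$) and show the cross-terms cancel, which is where the hypothesis $\phi(\hZ(\la))=\bnn\Psi(\hZ(\la))\bnn\ge0$ (Legendre condition for the majorant system) is essential.

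Finally I would sum the local identities over $t_0\in(a,b]$: the left side gives $l_c(\hY,a,b)-l_c(Y,a,b)+l_c(\tY,a,b)$ by \eqref{notatfoc}, while the right side telescopes to $\mu(Y(a^+),\hat Y(a^+))-\mu(Y(b),\hat Y(b))$; using that $\mu(Y(\la),\hat Y(\la))$ is right-continuous (no left focal point of any of the three bases lies at $a$ when we restrict attention to $(a,b]$) this equals $\mu(Y(a),\hat Y(a))-\mu(Y(b),\hat Y(b))$, which is \eqref{elyscond51glob}. As a sanity check, the special case $\hZ\equiv I$, $Z=Z^{[0]}(\la)$ should reduce \eqref{elyscond51glob} to the statement that $l_c(Y^{[0]},a,b)=l_c(Y_{N+1}^{[0]},a,b)+\dots$ consistent with \eqref{confineigproper}, and the case $Z=\hZ$ should give the trivial identity $0=0$; both follow immediately from the formula.
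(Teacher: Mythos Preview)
The paper does not give an independent proof of this theorem at all: its entire argument is that \eqref{elyscond51glob} is a verbatim restatement of \cite[Theorem~2.2]{jE16}, with the continuous variable $t$ renamed to $\la$ and the Hamiltonians identified as $\cH(\la)=\Psi(Z(\la))$, $\hH(\la)=\Psi(\hZ(\la))$. So the paper's ``proof'' is a one-line citation, and you are in effect trying to reconstruct the proof of the cited external result rather than the proof in this paper.

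Your outline is a reasonable sketch of how \cite[Theorem~2.2]{jE16} is actually established --- piecewise constancy of $\Ker X$, $\Ker\hat X$, $\Ker\tilde X$ under the sign hypotheses, then a local jump identity for $\mu(Y(\la),\hat Y(\la))$ at each point, then telescoping --- but the hard step (your ``main obstacle'': controlling the $\ind$-contributions when several rank drops coincide) is precisely the content of that paper, and you end up invoking \cite[Theorem~2.2]{jE16} in the middle of your own argument anyway. In other words, your plan is circular unless you supply that local analysis yourself, and the details there (the index theorem for monotone matrix functions in the style of \cite{wK.rSH13,Kratz-Analysis} and the limit properties of the comparative index, cf.\ \cite[Section~3.5]{bookDEH}) are substantial. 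Two smaller points: your claim that $\mu(Y(\la),\hat Y(\la))$ is right-continuous needs justification independent of the local formula (it follows from the right-continuity of $\rank X(\la)$, $\rank\hat X(\la)$, $\rank\tilde X(\la)$ under the Legendre-type conditions, but you have not said this); and your sanity check mixing $Z^{[0]}(\la)$ with $l_c(Y_{N+1}^{[0]},a,b)$ conflates a fixed-$k$ continuous statement with a discrete summation and does not type-check as written.

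For the purposes of this paper, the correct move is simply to do what the paper does: observe that under the identifications above the hypotheses \eqref{majham} become exactly the majorant and Legendre conditions of \cite[Theorem~2.2]{jE16}, and cite that result.
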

\begin{proof}
Here we rewrite the main result in \cite[Theorem 2.2]{jE16} replacing the variable $t$ by $\la$ and using
that $Y(\la)$ and $\hY(\la)$ are conjoined bases of  systems in form
\eqref{Ham1} with the Hamiltonians $\cH(\la)=\Psi(Z(\la)),\,\hH(\la)=\Psi(\hZ(\la)),$
respectively.
 In this notation
the proof follows from the proof of Theorem~2.2 in \cite{jE16}.
\end{proof}
As a corollary of Theorem~\ref{maincompar} one can derive the
following separation theorem (see \cite[Theorem 2.3]{jE16} and
\cite[Theorem~4.1]{pS.rSH17}) which we reformulate in the notation
introduced above.
\begin{theorem}[Separation theorem for the continuous case]\label{contsep}
Under the notation and the assumptions of Theorem~\ref{maincompar} suppose additionally that the matrix $\hZ^{-1}(\la)Y(\la)$ does not depend on $\la\in\bR,$ then
\begin{equation}\label{elyscond51sep}
l_c(\hY,a,b)- l_c(Y,a,b)=\mu(Y(\la),\hat Y(\la))|_b^a.
\end{equation}
\end{theorem}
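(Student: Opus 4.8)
The plan is to derive Theorem~\ref{contsep} directly from Theorem~\ref{maincompar} by exploiting the extra hypothesis that $\tY(\la)=\hZ^{-1}(\la)Y(\la)$ is constant in $\la$. First I would observe that if $\tY(\la)\equiv\tY$ is independent of $\la$, then its blocks $\tX(\la)\equiv\tX$ are likewise constant, so $\rank\tX(\la)$ is constant on $[a,b]$ and hence $\tY$ has no proper focal points in $(a,b]$ in the sense of Definition~\ref{multcon}; that is, $m_c(\tY(\tau))=\rank\tX(\tau^-)-\rank\tX(\tau)=0$ for every $\tau\in(a,b]$. Consequently $l_c(\tY,a,b)=0$ by the definition \eqref{notatfoc}.

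Next I would substitute $l_c(\tY,a,b)=0$ into the comparison formula \eqref{elyscond51glob} of Theorem~\ref{maincompar}, which under the present hypotheses reads
\begin{equation*}
l_c(\hY,a,b)-l_c(Y,a,b)+l_c(\tY,a,b)=\mu(Y(a),\hat Y(a))-\mu(Y(b),\hat Y(b)).
\end{equation*}
Dropping the vanishing term yields
\begin{equation*}
l_c(\hY,a,b)-l_c(Y,a,b)=\mu(Y(a),\hat Y(a))-\mu(Y(b),\hat Y(b)),
\end{equation*}
and the right-hand side is precisely $\mu(Y(\la),\hat Y(\la))\big|_b^a$ in the substitution notation introduced in Section~\ref{sec2}. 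This is exactly \eqref{elyscond51sep}, so the proof is complete once these two observations are assembled.

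The only point requiring a modicum of care is the verification that constancy of $\tY(\la)$ forces $l_c(\tY,a,b)=0$: one must make sure the hypotheses of Theorem~\ref{maincompar} already guarantee that $\tY(\la)$ is a genuine conjoined basis (so that $m_c$ is defined for it) and that the Legendre-type condition needed for focal points to be isolated is in force — but since $\tY(\la)=\hZ^{-1}(\la)Y(\la)$ with $\hZ(\la),Z(\la)$ symplectic, $\tY(\la)$ automatically satisfies the conjoined-basis relation \eqref{conjoined}, and a constant $\tX$ trivially has locally constant rank, so no oscillation-theoretic subtlety intervenes. Thus there is essentially no obstacle; the statement is a clean specialization of Theorem~\ref{maincompar}, and the argument is entirely bookkeeping with the definitions \eqref{propfocf}–\eqref{notatfoc}. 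I would present it in two or three lines, emphasizing only the reduction $l_c(\tY,a,b)=0$.
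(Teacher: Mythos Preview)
Your proposal is correct and matches the paper's own treatment: the paper does not give a separate proof but simply declares the separation theorem to be a corollary of Theorem~\ref{maincompar}, which is precisely what you do by observing that a constant $\tY(\la)$ has $l_c(\tY,a,b)=0$ and substituting into \eqref{elyscond51glob}.
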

In the subsequent results we will use the following corollary to Theorem~\ref{contsep}. Remark that this property is well-known, see, for example, \cite[Lemma 3.6]{BHK}.
\begin{corollary}\label{kratzcor}
Assume that the symmetric matrix $Q(\la)\in C_p^1$ is nondecreasing, i.e., $\dot{Q}(\la)\ge 0,\,\la\in
[a,b].$  Then $\rank Q(\la)$ is piecewise constant and
\begin{equation}\label{rankQind}
 l_c(({Q}\;{I})^T,a,b)= \sum\limits_{a<\la\le b}(\rank Q(\la^-)-\rank Q(\la))=\ind Q(a)-\ind Q(b).
\end{equation}
\end{corollary}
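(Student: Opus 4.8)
The plan is to deduce the corollary from the separation theorem (Theorem~\ref{contsep}) applied to a judiciously chosen pair of symplectic matrices. First I would observe that the matrix-valued function
\[
\hZ(\la):=\mmatrix{I & 0 \\ -Q(\la) & I}
\]
is symplectic, lies in $C_p^1$ by the hypothesis on $Q$, and satisfies $\hZ(\la)\bnn=(Q(\la)\;I)^T$ up to the harmless sign in the first block (more precisely $\hZ(\la)\bnn=\binom{0}{I}$ is the wrong choice, so instead I would use $\hZ(\la)\binom{I}{0}$—let me restate: take $\hY(\la):=\hZ(\la)\binom{I}{0}=\binom{I}{-Q(\la)}$, whose focal points coincide with those of $(Q\;I)^T$ after a trivial symplectic change of the columns, or more directly work with $\hZ(\la)$ reflected so that its first column block is $Q(\la)$). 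The cleanest route is: set $Y(\la):=\binom{0}{I}$, a constant conjoined basis (the principal solution at the point in question with $X\equiv 0$), and $\hZ(\la)$ as above so that $\tilde Y(\la)=\hZ^{-1}(\la)Y(\la)=\binom{0}{I}$ is also independent of $\la$. Then $\hY(\la)=\hZ(\la)\binom{0}{I}=\binom{0}{I}$—this degenerates, so the correct pairing is $Y(\la)=\binom{I}{0}$ constant and $\hZ(\la)$ with upper-left block producing $Q$; one checks $\hZ^{-1}(\la)Y(\la)$ is then constant in $\la$ as required by Theorem~\ref{contsep}.

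The substantive verification is the monotonicity hypothesis \eqref{majham}. With $Z(\la):=I$ (so $\Psi(Z(\la))=0$) and $\hZ(\la)=\smatrix{I&0}{-Q(\la)&I}$, a direct computation gives $\dot{\hZ}(\la)=\smatrix{0&0}{-\dot Q(\la)&0}$, hence
\[
\Psi(\hZ(\la))=\cJ^T\dot{\hZ}(\la)\hZ^{-1}(\la)=\mmatrix{-I&0}{0&0}\cdots
\]
which works out to $\Psi(\hZ(\la))=\smatrix{\dot Q(\la)&0}{0&0}\ge 0$ by the assumption $\dot Q(\la)\ge0$; thus both $\Psi(\hZ(\la))\ge0$ and, since $\Psi(Z(\la))-\Psi(\hZ(\la))=-\Psi(\hZ(\la))\le 0$, the first inequality in \eqref{majham} must instead be arranged by swapping the roles of $Z$ and $\hZ$. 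Concretely I would take $\hZ(\la):=I$ and $Z(\la):=\smatrix{I&0}{Q(\la)&I}$, so that $\Psi(\hZ^{-1}Z)=\Psi(Z(\la))=\smatrix{-\dot Q(\la)&0}{0&0}$—again a sign issue—so the final correct choice is $Z(\la)=\smatrix{I&0}{-Q(\la)&I}$ with $\hZ=I$, giving $\Psi(Z(\la))=\smatrix{\dot Q(\la)&0}{0&0}\ge0$, the majorant condition $\Psi(Z)-\Psi(\hZ)=\Psi(Z)\ge0$ holds, and $\phi(\hZ)=\binom{0}{I}^T\Psi(\hZ)\binom{0}{I}=0\ge0$ is trivial. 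With $Y(\la):=Z(\la)\binom{0}{I}=\binom{0}{I}$ constant—degenerate again—so I pair with $Y(\la)=Z(\la)\binom{I}{0}=\binom{I}{-Q(\la)}$ and $\hY(\la)=\hZ(\la)\binom{I}{0}=\binom{I}{0}$; then $\tilde Y=\hZ^{-1}Y=\binom{I}{-Q(\la)}$ does depend on $\la$, so Theorem~\ref{contsep} does not apply directly and one must instead use the full Theorem~\ref{maincompar}, computing $l_c(\hat Y,a,b)=0$ (since $\hat X\equiv I$ is nonsingular), $l_c(Y,a,b)=0$ (since $X(\la)\equiv I$), and reading off $l_c(\tilde Y,a,b)=\mu(Y(a),\hat Y(a))-\mu(Y(b),\hat Y(b))$; evaluating the comparative indices $\mu\bigl(\binom{I}{-Q(\la)},\binom{I}{0}\bigr)$ from the definition yields $\mathcal M=0$, $\mathcal T=I$, $\mathcal D=w^T(Y,\hat Y)=-Q(\la)^T\cdot\text{(sign)}$, whence $\mu=\ind(\pm Q(\la))$, and the right-hand side becomes $\ind Q(a)-\ind Q(b)$.

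The middle equality $l_c(\tilde Y,a,b)=\sum_{a<\la\le b}(\rank Q(\la^-)-\rank Q(\la))$ is then just the definition of $l_c$ via Definition~\ref{multcon}: the conjoined basis $\tilde Y(\la)=\binom{I}{-Q(\la)}$ has first block identically $I$ with full rank—so this too is degenerate, confirming that the correct conjoined basis to track is $(Q\;I)^T$ itself, i.e.\ one takes $Z(\la)=\smatrix{-Q(\la)&I}{-I&0}\in\Sp$ (a valid symplectic matrix) with first column block $\binom{-Q(\la)}{-I}$, equivalently $\binom{Q(\la)}{I}$ after sign normalization, and repeats the computation; then $m_c$ counts exactly the rank drops of $Q$. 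The piecewise constancy of $\rank Q(\la)$ follows from Remark~\ref{constInitConj}(ii) applied to this $Z(\la)$ (its block $\cB\equiv I$ is constant, but the relevant statement is about $X$-blocks of solutions), or more elementarily from the fact that $\dot Q\ge0$ forces $\Ker Q(\la)$ to be piecewise constant and nonincreasing. \textbf{The main obstacle} is purely bookkeeping: pinning down the correct symplectic representative $Z(\la)$ whose associated conjoined basis is genuinely $(Q\;I)^T$ (not a degenerate full-rank block), so that the focal-point count of Definition~\ref{multcon} literally reproduces the rank drops of $Q$, while simultaneously keeping $\Psi(Z(\la))\ge0$ and $\phi\ge0$; once the representative is fixed, both equalities in \eqref{rankQind} are immediate from Theorem~\ref{contsep} and the definition of $m_c$, respectively.
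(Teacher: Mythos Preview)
Your overall strategy---apply Theorem~\ref{contsep} (or \ref{maincompar}) to a well-chosen pair of symplectic matrices built out of $Q(\la)$---is exactly the paper's approach. The problem is that you never actually land on a pair that works: every attempt you make either produces the wrong conjoined basis (upper block identically $I$, so no focal points at all), violates the majorant condition, or destroys the constancy of $\hZ^{-1}Z$ needed for Theorem~\ref{contsep}. In particular, your ``final correct choice'' $Z(\la)=\smatrix{I&0}{-Q(\la)&I}$, $\hZ=I$ gives $Y=Z\bnn=\bnn$ constant and $\tilde Y=\binom{I}{-Q(\la)}$ with full-rank upper block, so \emph{all three} focal-point counts in \eqref{elyscond51glob} vanish and the identity degenerates to $0=\mu(Y(a),\hY(a))-\mu(Y(b),\hY(b))$, which is false in general. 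The later choice $Z(\la)=\smatrix{-Q(\la)&I}{-I&0}$ has $Z\bnn=\binom{I}{0}$, again constant. Your own closing paragraph correctly diagnoses that the whole difficulty is this bookkeeping, but the proposal stops short of resolving it.

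The paper's fix is to place $Q(\la)$ in the \emph{upper-right} block rather than the lower-left: take
\[
\hZ(\la):=\mmatrix{I & Q(\la) \\ 0 & I},\qquad Z(\la):=\hZ(\la)\,\cJ.
\]
Then $\hY(\la)=\hZ(\la)\bnn=\binom{Q(\la)}{I}$ is precisely the conjoined basis $(Q\;I)^T$ whose focal points you want; $Y(\la)=Z(\la)\bnn=\hZ(\la)\binom{I}{0}=\binom{I}{0}$ is constant with $l_c(Y,a,b)=0$; $\hZ^{-1}(\la)Z(\la)=\cJ$ is constant, so Theorem~\ref{contsep} applies; and a one-line computation gives $\Psi(Z(\la))=\Psi(\hZ(\la))=\diag\{0,\dot Q(\la)\}\ge 0$, so both conditions in \eqref{majham} hold. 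Formula \eqref{elyscond51sep} then reads $l_c((Q\;I)^T,a,b)=\mu(\binom{I}{0},\binom{Q(\la)}{I})\big|_b^a$, and the comparative index on the right evaluates directly to $\ind Q(\la)$, giving \eqref{rankQind}. The middle expression in \eqref{rankQind} is just Definition~\ref{multcon} applied to $\binom{Q(\la)}{I}$.
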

\begin{proof}
Applying Theorem~\ref{contsep} to the case $\hZ(\la):=\mmatrix{I & Q(\la) \\ 0 & I},\,Z(\la):=\hZ(\la)\cJ$ we see that
$\Psi(Z(\la))=\Psi(\hZ(\la))=\diag\{0,\dot{Q}(\la)\}\ge 0.$
 By \eqref{elyscond51sep} for $\hY(\la):=\hZ(\la)\bnn$ and $Y(\la):=Z(\la)\bnn$ we derive
 $ l_c(({Q}\;{I})^T,a,b)=\mu(({I}\;{0})^T,({Q(\la)}\;I)^T)|_b^a=\ind Q(\la)|_b^a$
or \eqref{rankQind}.
\end{proof}

Another important corollary to Theorem~\ref{maincompar} is the following theorem.
\begin{theorem}\label{forward}
 Consider the discrete symplectic system
\begin{equation}\label{gensys}
y_{k+1}(\la)=W_k(\la)y_k(\la),\,W_k^T(\la)\cJ W_k(\la)=\cJ,\,k\in[0,N]_{\sZ},
\end{equation}
such that the symplectic matrix $W_k(\la)\in C_p^1$ obeys the condition
\begin{equation}\label{PsiWk}
    \Psi(W_k(\la))\ge 0,\,k\in[0,N]_{\sZ},\,\la\in\bR,
\end{equation}
where the symmetric operator  $\Psi(\cdot)$ is defined by
\eqref{Smon}. Assume that $\cZ_{k}(\la)$ is a symplectic fundamental
matrix of \eqref{gensys} with $\cZ_0(\la)\in C_p^1$ and  condition \eqref{monZ} holds for $k=0$.
Then for a conjoined basis $\cY_{k}(\la)$  of \eqref{gensys} such
that $\cY_{0}(\la)=\cZ_{0}(\la)\bnn$ we have
\begin{align} \label{loccon}
 - \Delta l_c(\cY_k,a,b) +l_c(W_k\bnn,a,b)&=\mu(\cY_{k+1}(\la),W_k(\la)\bnn)|_b^a\\&=m_d(\cY_k(a))-m_d(\cY_k(b)).\notag
\end{align}
Moreover,
\begin{align}\label{locglob}
 l_c(\cY_{N+1},a,b)&-l_c(\cY_0,a,b) -\sum\limits_{k=0}^{N} l_c(W_k\bnn,a,b)\\&=l_d(\cY(b),0,N+1)-l_d(\cY(a),0,N+1).\notag
\end{align}
\end{theorem}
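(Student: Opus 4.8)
The plan is to establish the one-step identity \eqref{loccon} for each fixed $k\in[0,N]_\sZ$ and then deduce \eqref{locglob} from it by summation over $k$ and telescoping. The passage from \eqref{loccon} to \eqref{locglob} is routine: summing \eqref{loccon} over $k=0,\dots,N$, using $\sum_{k=0}^{N}\Delta l_c(\cY_k,a,b)=l_c(\cY_{N+1},a,b)-l_c(\cY_0,a,b)$ together with the definition \eqref{E:n1.def} of $l_d(\cY(\la),0,N+1)=\sum_{k=0}^{N}m_d(\cY_k(\la))$, and multiplying through by $-1$, produces \eqref{locglob}. So the whole content sits in \eqref{loccon}.

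For \eqref{loccon} I would invoke the continuous comparison theorem, Theorem~\ref{maincompar}, once for each $k$. First, Proposition~\ref{psiZmod}(ii), which applies verbatim to \eqref{gensys}, upgrades the hypotheses $W_k(\la)\in C_p^1$, $\Psi(W_k(\la))\ge0$ (this is \eqref{PsiWk}) and $\cZ_0(\la)\in C_p^1$, $\Psi(\cZ_0(\la))\ge0$ (this is \eqref{monZ} at $k=0$) to: $\cZ_k(\la)\in C_p^1$ and $\Psi(\cZ_k(\la))\ge0$ for all $k\in[0,N+1]_\sZ$. Now fix $k$ and apply Theorem~\ref{maincompar} with the symplectic $C_p^1$-matrices $Z(\la):=\cZ_{k+1}(\la)$ and $\hZ(\la):=W_k(\la)$, regarded as $\la$-fundamental matrices, so that $Y(\la)=\cY_{k+1}(\la)$ and $\hY(\la)=W_k(\la)\bnn$. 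The key point is that, because $\cZ_{k+1}(\la)=W_k(\la)\cZ_k(\la)$, one gets $\hZ^{-1}(\la)Z(\la)=\cZ_k(\la)$, and hence $\tY(\la)=\hZ^{-1}(\la)Y(\la)=\cZ_k(\la)\bnn=\cY_k(\la)$; the first condition in \eqref{majham} then reads $\Psi(\cZ_k(\la))\ge0$, which holds by the previous step, and the second (Legendre-type) condition $\phi(W_k(\la))\ge0$ is immediate from $\Psi(W_k(\la))\ge0$. Thus \eqref{elyscond51glob} becomes $l_c(W_k\bnn,a,b)-l_c(\cY_{k+1},a,b)+l_c(\cY_k,a,b)=\mu(\cY_{k+1}(\la),W_k(\la)\bnn)|_b^a$.

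It then remains to put both sides into the stated form: $l_c(\cY_k,a,b)-l_c(\cY_{k+1},a,b)=-\Delta l_c(\cY_k,a,b)$ on the left, and, on the right, the comparative-index formula \eqref{forwind} transported to \eqref{gensys} gives $\mu(\cY_{k+1}(\la),W_k(\la)\bnn)=m_d(\cY_k(\la))$, so the right-hand side equals $m_d(\cY_k(a))-m_d(\cY_k(b))$. This is exactly \eqref{loccon}, and the summation above yields \eqref{locglob}.

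I expect the only delicate point to be the correct pairing of the data in Theorem~\ref{maincompar}: one must take $Z=\cZ_{k+1}$, $\hZ=W_k$ and not the reverse, since only then does $\hZ^{-1}Z$ collapse to $\cZ_k$ and thereby inherit the monotonicity $\Psi(\cZ_k(\la))\ge0$ from Proposition~\ref{psiZmod}(ii); with the opposite assignment the first condition in \eqref{majham} would require $\Psi(W_k)\ge\Psi(\cZ_{k+1})$, which is not available. One also has to observe that $W_k(\la)$ and $\cZ_k(\la)$, viewed as functions of $\la$, are genuine fundamental matrices of differential Hamiltonian systems of the form \eqref{Ham1} with Hamiltonians $\Psi(W_k(\la))$ and $\Psi(\cZ_k(\la))$ satisfying the Legendre condition \eqref{B} (the lower-right block of a nonnegative symmetric matrix being nonnegative), so that the continuous focal-point counts \eqref{notatfoc} and the identity \eqref{forwind} are legitimately in force.
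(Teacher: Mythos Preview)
Your proof is correct and follows essentially the same route as the paper: invoke Proposition~\ref{psiZmod}(ii) to propagate \eqref{monZ} to all $k$, apply Theorem~\ref{maincompar} with $Z(\la):=\cZ_{k+1}(\la)$ and $\hZ(\la):=W_k(\la)$ so that $\hZ^{-1}Z=\cZ_k$, identify the comparative indices via \eqref{forwind}, and then sum over $k$. Your remarks on why the pairing $Z=\cZ_{k+1}$, $\hZ=W_k$ is forced are accurate and make explicit what the paper leaves implicit.
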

\begin{proof}
The condition  $\Psi(\cZ_0(\la))\ge 0,\,\la\in\bR$ and \eqref{PsiWk}
imply \eqref{monZ} for $k=1,\dots,N+1$ according
Proposition~\ref{psiZmod}(ii). Then, to prove \eqref{loccon} one
can apply Theorem~\ref{maincompar} for the case $\hat
Z(\la):=W_k(\la)$, $Z(\la):=\cZ_{k+1}(\la),$ and then $\tZ(\la):=\hat
Z^{-1}(\la)Z(\la)=\cZ_{k}(\la)$ for $k\in[0,N]_{\bZ}.$ It is clear that
under this settings assumptions \eqref{majham} of
Theorem~\ref{maincompar} are satisfied. Applying
\eqref{elyscond51glob} and using \eqref{forwind} for the
comparative indices in the right hand side we derive identity
\eqref{loccon}. Summing \eqref{loccon} from $k=0$ to $k=N$ we
derive \eqref{locglob}. The proof is completed.
\end{proof}
\begin{remark}\label{permutorder}
\par(i)For the special case $\cY_k(\la):=Y_k^{[0]}(\la)$  putting
$W_k(\la)\equiv\cS_k(\la),\,k\in[0,N]_{\sZ}$ we have in \eqref{locglob}
that $l_c(Y_0^{[0]},a,b)=0$ while according to
Definition~\ref{dnlaosc:D:finite.eigenvalue} the quantity
$l_c(Y_{N+1}^{[0]},a,b)$ is equal to the number $\#\{\nu\in\sigma
|a <\nu\le b\}:=\sum\limits_{a<\nu\le b}\theta(\nu)$ of finite
eigenvalues of \eqref{Sla},\eqref{E0} in $(a,b].$
\par(ii) Moreover, using the
finiteness of the sum $\sum\limits_{a<\nu\le b}\vartheta_{k}(\nu)$
we have
   $ \sum\limits_{a<\nu\le b}\sum\limits_{k=0}^{N}\vartheta_{k}(\nu)=\sum\limits_{k=0}^{N}\sum\limits_{a<\nu\le b}\vartheta_{k}(\nu)$
and under assumption \eqref{finrank} we have a similar property as
$a\rightarrow -\infty,$ i.e.,
   $ \sum\limits_{\nu\le b}\sum\limits_{k=0}^{N}\vartheta_{k}(\nu)=\sum\limits_{k=0}^{N}\sum\limits_{\nu\le b}\vartheta_{k}(\nu).$
So we see that Theorem~\ref{globosc} follows from
Theorem~\ref{forward}.
\end{remark}

Applying  Theorem~\ref{forward} we also show that for the
calculation of the number $\#\{\nu\in\sigma|\,a<\nu\le b\}$ of
finite eigenvalues of \eqref{Sla},\eqref{E0} it is possible to use the
transformation $\tY_{k}(\la)=R_{k}^{-1}Y_{k}^{[0]}(\la)$  of the
principal solution $Y_{k}^{[0]}(\la)$ of \eqref{Sla} with the
symplectic transformation matrix $R_{k}$ which does not depend on
$\la$ and obeys the condition
\begin{equation}\label{right}
    R_{N+1}=I.
\end{equation}
Then, by Remark~\ref{constInitConj}(iii) $\tY_{k}(\la)$ is a
conjoined of the transformed symplectic system \eqref{tSla} with
condition \eqref{tSmon}. Moreover, according to
Remark~\ref{constInitConj}(ii), for the block $\tcB_k(\la)$ of
$\tS_k(\la)$ in \eqref{tSla} there exists the finite limit
\begin{equation}\label{jumpt}
 \vartheta(\tS_k(\lambda_0))= \tilde\vartheta_k(\lambda_0) := \rank \tB_{k}(\lambda_0^{-})-\rank\tB_{k}(\lambda_0),\quad k\in[0,N]_\sZ,\,\la_{0}\in\bR.
\end{equation}
Introduce the notation
\begin{equation}\label{newnot1}
  n_R(\la_0):=\sum\limits_{k=0}^N\tilde\vartheta_k(\lambda_0),
\end{equation}
where  the index $R$ denotes the transformation matrix $R_k$ in the definition of $\tS_k(\la).$
We have the following result.

\begin{theorem}\label{newmay2019}
Assume \eqref{dnlaosc:E:Intro.4} and \eqref{Smon}, then for the
conjoined basis $\tY_{k}(\la)=R_{k}^{-1}Y_{k}^{[0]}(\la)$ of
\eqref{tSla}, where the symplectic matrix $R_{k}$ obeys condition
\eqref{right} we have the following formula
\begin{equation}\label{renormeigmay2019}\begin{aligned}
    l_d(R^{-1}Y^{[0]}(b),0,N+1)&-l_d(R^{-1}Y^{[0]}(a),0,N+1)\\&+\sum\limits_{a<\nu\le b}n_R(\nu)=\#\{\nu\in\sigma|\,a<\nu\le b\}
 \end{aligned} \end{equation}
connecting the number of finite eigenvalues of \eqref{Sla},\eqref{E0} in
$(a,b]$ with the number of forward focal points of the conjoined
basis $\tY_{k}(\la)=R_{k}^{-1}Y_{k}^{[0]}(\la)$ and the numbers given by \eqref{jumpt} and \eqref{newnot1}.

Moreover, under assumption \eqref{finrank} the sum
$\sum\limits_{a<\nu\le b}n_R(\nu)$ is finite as $a\rightarrow -\infty,$ then  for any $a<\la_{1}=\min \sigma$ the left-hand side of \eqref{renormeigmay2019} presents the number $\#\{\nu\in\sigma|\,\nu\le b\}$ of finite eigenvalues of \eqref{Sla},\eqref{E0} less then or equal to $\la:=b.$
\end{theorem}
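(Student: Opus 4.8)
The plan is to derive \eqref{renormeigmay2019} as a direct corollary of Theorem~\ref{forward} applied to the transformed system \eqref{tSla}, using the identification of the continuous focal point counts with spectral data established in \eqref{conBproper} and \eqref{confineigproper}. First I would apply Theorem~\ref{forward} with $W_k(\la):=\tS_k(\la)=R_{k+1}^{-1}\cS_k(\la)R_k$ and with the symplectic fundamental matrix $\cZ_k(\la):=R_k^{-1}Z_k^{[0]}(\la)$, so that $\cY_k(\la)=\cZ_k(\la)\bnn=R_k^{-1}Y_k^{[0]}(\la)=\tY_k(\la)$. By Proposition~\ref{psiZmod}(iv) (or Remark~\ref{constInitConj}(iii)) the monotonicity condition \eqref{PsiWk} holds for $\tS_k(\la)$, and by Remark~\ref{constInitConj}(i),(iii) condition \eqref{monZ} holds for $\cZ_0(\la)=R_0^{-1}Z_0^{[0]}(\la)=R_0^{-1}$. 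Thus all hypotheses of Theorem~\ref{forward} are met, and \eqref{locglob} gives
\begin{equation*}
  l_c(\tY_{N+1},a,b)-l_c(\tY_0,a,b)-\sum_{k=0}^N l_c(\tS_k\bnn,a,b)=l_d(R^{-1}Y^{[0]}(b),0,N+1)-l_d(R^{-1}Y^{[0]}(a),0,N+1).
\end{equation*}

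The next step is to evaluate the three continuous terms on the left. Because $R_{N+1}=I$ by \eqref{right}, we have $\tY_{N+1}(\la)=R_{N+1}^{-1}Y_{N+1}^{[0]}(\la)=Y_{N+1}^{[0]}(\la)$, so by \eqref{confineigproper} the term $l_c(\tY_{N+1},a,b)=l_c(Y_{N+1}^{[0]},a,b)=\#\{\nu\in\sigma\mid a<\nu\le b\}$. For the term $l_c(\tY_0,a,b)$: since $\cY_0(\la)=R_0^{-1}\bnn$ does not depend on $\la$, its upper block $X$ is $\la$-independent, and by Definition~\ref{multcon} there are no proper focal points, hence $l_c(\tY_0,a,b)=0$. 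For the middle sum, applying \eqref{conBproper} to the transformed matrix $\tS_k(\la)$ — whose block $\tcB_k(\la)$ has piecewise constant kernel and image by Remark~\ref{constInitConj}(ii) so that the jumps $\tilde\vartheta_k(\la_0)$ of \eqref{jumpt} are well defined — we get $l_c(\tS_k\bnn,a,b)=\sum_{a<\nu\le b}\tilde\vartheta_k(\nu)$, and summing over $k$ and using \eqref{newnot1} yields $\sum_{k=0}^N l_c(\tS_k\bnn,a,b)=\sum_{a<\nu\le b}n_R(\nu)$ (the interchange of the finite sums over $k$ and $\nu$ being justified as in Remark~\ref{permutorder}(ii)). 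Substituting these three evaluations gives exactly \eqref{renormeigmay2019}.

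For the final assertion, under \eqref{finrank} Theorem~\ref{globosc} already guarantees that $\sum_{\nu\le b}\sum_{k=0}^N\vartheta_k(\nu)$ and the spectrum below $b$ are finite; one transfers this to the transformed quantities by noting that $\tS_k(\la)$ again satisfies \eqref{tSmon} and the same Remark~\ref{constInitConj}(ii) arguments give that $\rank\tcB_k(\la)$ stabilizes as $\la\to-\infty$, so $\sum_{\nu\le b}n_R(\nu)$ is finite. Then for $a<\la_1=\min\sigma$ the terms $\#\{\nu\in\sigma\mid a<\nu\le b\}$ and $\#\{\nu\in\sigma\mid\nu\le b\}$ coincide, and \eqref{renormeigmay2019} reduces to the claimed formula. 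I expect the main technical point to be the verification that the transformed block $\tcB_k(\la)$ inherits the piecewise-constant rank behavior so that $\tilde\vartheta_k$, $n_R$, and their finiteness under \eqref{finrank} are legitimate — but this is already furnished by Proposition~\ref{psiZmod}(iv) together with Remark~\ref{constInitConj}(ii), so the proof is essentially a bookkeeping assembly of Theorem~\ref{forward} with the dictionary \eqref{conBproper}--\eqref{confineigproper}.
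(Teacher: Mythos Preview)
Your derivation of \eqref{renormeigmay2019} is essentially identical to the paper's: apply Theorem~\ref{forward} to the transformed system with $W_k(\la)=\tS_k(\la)$ and $\cZ_k(\la)=R_k^{-1}Z_k^{[0]}(\la)$, then read off the three continuous terms using \eqref{right}, the constancy of $\tY_0$, and the dictionaries \eqref{conBproper}--\eqref{confineigproper}. Nothing to add there.

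For the finiteness statement under \eqref{finrank}, however, your justification has a gap. You write that ``the same Remark~\ref{constInitConj}(ii) arguments give that $\rank\tcB_k(\la)$ stabilizes as $\la\to-\infty$,'' but Remark~\ref{constInitConj}(ii) only yields piecewise constancy of $\rank\tcB_k(\la)$, not stabilization near $-\infty$. Assumption \eqref{finrank} concerns the original block $\cB_k(\la)$, and there is no direct mechanism in the paper transferring \eqref{finrank} from $\cB_k$ to $\tcB_k$ for an arbitrary constant symplectic sequence $R_k$. The paper instead argues intrinsically from the already-proved identity \eqref{renormeigmay2019}: by \eqref{eatnum} the two discrete focal-point counts are each bounded by $n(N+1)$, so
\[
\Bigl|\sum_{a<\nu\le b}n_R(\nu)-\#\{\nu\in\sigma\mid a<\nu\le b\}\Bigr|\le n(N+1),
\]
and therefore $\sum_{a<\nu\le b}n_R(\nu)$ is bounded as $a\to-\infty$ if and only if the spectrum is bounded below, which by \cite[Corollary~2.5]{jE15} is equivalent to \eqref{finrank}. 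This bypasses any need to analyze $\tcB_k(\la)$ at $-\infty$. Your conclusion is correct, but the route you sketched does not close without this (or an equivalent) argument.
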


\begin{proof}
Applying \eqref{locglob} to the case $\cY_k(\la):=\tilde
Y_k(\la)=R_{k}^{-1}\,Y_k^{[0]}(\la)$ and $W_k(\la):=\tS_k(\la)$ we
have
\begin{equation}\label{ad4}
\begin{aligned}
 l_c(\tY_{N+1},a,b)&-l_c(\tY_{0},a,b) -\sum\limits_{k=0}^{N} l_c(\tS_k\bnn,a,b)
 \\&=l_d(\tY(b),0,N+1)-l_d(\tY(a),0,N+1).
\end{aligned}
\end{equation}
We see that $\tY_{0}=R_{0}^{-1}Y_{0}^{[0]}(\la)=R_{0}^{-1}\bnn$
does not depend on $\la,$ then $l_c(\tY_{0},a,b)=0,$ while
$\tY_{N+1}(\la)=R_{N+1}^{-1}Y_{N+1}^{[0]}(\la)=Y_{N+1}^{[0]}(\la)$
due to condition \eqref{right}. Then, according to \eqref{conBproper} (see also Remark~\ref{permutorder}(ii)) we have
 $\sum\limits_{k=0}^{N} l_c(\tS_k\bnn,a,b)=\sum\limits_{a<\nu\le b}n_R(\nu).$ Similarly,
$l_c(Y_{N+1}^{[0]},a,b)=\#\{\nu\in\sigma|\,a<\nu\le b\}$ by \eqref{confineigproper}.
Substituting these equalities into \eqref{ad4} we complete the
proof of \eqref{renormeigmay2019}.

As in the proof of \cite[Corollary 2.5]{jE15} we have from
\eqref{renormeigmay2019}
\begin{equation}\label{ad3}
\begin{aligned}
&\left|\sum\limits_{a<\nu\le b}n_R(\nu) -\#\{\nu\in\sigma|\,a<\nu\le b\}\right|\\&=\left|l_d(R^{-1}Y^{[0]}(a),0,N+1)-l_d(R^{-1}Y^{[0]}(b),0,N+1)\right|\le n(N+1),
\end{aligned}
\end{equation}
where we have used estimate \eqref{eatnum}. So we see from
\eqref{ad3} that the sum $\sum\limits_{a<\nu\le b}n_R(\nu)$ is
finite as $a\rightarrow -\infty$ if and only if the spectrum
$\sigma$ of \eqref{Sla},\eqref{E0} is bounded from below. But
condition \eqref{finrank} is equivalent to \eqref{finspectr}, see
\cite[Corollary 2.5]{jE15}, then $\#\{\nu\in\sigma|\,a<\nu\le b\}$
is bounded for any fixed $b\in\bR$ as $a\rightarrow -\infty.$
Finally \eqref{renormeigmay2019} for $a<\la_{1}=\min \sigma$
 calculates the number of finite eigenvalues
 of \eqref{Sla},\eqref{E0} less then or equal to $b$.
\end{proof}
For the special choice of the transformation matrix $R_{k}$ in form
    $R_{k}:=Z_{k}^{[N+1]}(a)$
or
   $ R_{k}:=Z_{k}^{[N+1]}(b)$
in the formulation of Theorem~\ref{newmay2019} we derive the
renormalized oscillation theorem for  forward focal points.
\begin{theorem}\label{globoscrel1}
Let $Z_{k}^{[N+1]}(\la)$ be symplectic fundamental matrix of
\eqref{Sla} such that $Z_{N+1}^{[N+1]}(\la)=I,\,\la\in\bR.$ Then,
under the assumptions of Theorem~\ref{newmay2019} for $R_{k}:=Z_{k}^{[N+1]}(a)$ formula \eqref{renormeigmay2019}  takes the form
\begin{equation}\label{renormeiga}\begin{aligned}
    l_d((Z^{[N+1]}(a))^{-1}Y^{[0]}(b),0,N+1)+\sum\limits_{a<\nu\le b}n_{Z^{[N+1]}(a)}(\nu)=\#\{\nu\in\sigma|\,a<\nu\le b\}.
 \end{aligned} \end{equation}

Similarly, if  $R_{k}:=Z_{k}^{[N+1]}(b)$, then instead of \eqref{renormeigmay2019}  we have
\begin{equation}\label{renormeigb}\begin{aligned}
    -l_d((Z^{[N+1]}(b))^{-1}Y^{[0]}(a),0,N+1)+\sum\limits_{a<\nu\le b}n_{Z^{[N+1]}(b)}(\nu)=\#\{\nu\in\sigma|\,a<\nu\le b\}.
\end{aligned}  \end{equation}
If, additionally, \eqref{finrank} holds, then for  any $a<\la_{1}=\min \sigma$ formulae \eqref{renormeiga} and \eqref{renormeigb} present the number $\#\{\nu\in\sigma|\,\nu\le b\}$ of finite eigenvalues of \eqref{Sla},\eqref{E0} less then or equal to $\la:=b.$
\end{theorem}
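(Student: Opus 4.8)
The plan is to specialize Theorem~\ref{newmay2019} to the two particular transformation sequences $R_k:=Z_k^{[N+1]}(a)$ and $R_k:=Z_k^{[N+1]}(b)$, and then simply verify that the hypotheses of Theorem~\ref{newmay2019} are met and that one of the two focal-point terms in \eqref{renormeigmay2019} vanishes in each case. First I would check the admissibility of $R_k:=Z_k^{[N+1]}(\beta)$ for $\beta\in\{a,b\}$: by construction $Z^{[N+1]}_{N+1}(\beta)=I$, so the required condition \eqref{right} $R_{N+1}=I$ holds; and since $Z^{[N+1]}_k(\beta)$ is a symplectic matrix not depending on $\la$ (it is a fundamental matrix of \eqref{Sla} evaluated at the fixed value $\la=\beta$), it is a legitimate $\la$-independent symplectic transformation sequence. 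Hence Theorem~\ref{newmay2019} applies verbatim and \eqref{renormeigmay2019} reads
\[
 l_d(R^{-1}Y^{[0]}(b),0,N+1)-l_d(R^{-1}Y^{[0]}(a),0,N+1)+\sum_{a<\nu\le b}n_R(\nu)=\#\{\nu\in\sigma|\,a<\nu\le b\}
\]
with $R_k=Z^{[N+1]}_k(\beta)$.

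The second step is to kill one of the two discrete focal-point counts. For $\beta=a$: $R_k^{-1}Y_k^{[0]}(a)=(Z^{[N+1]}_k(a))^{-1}Y^{[0]}_k(a)$, and since $Y^{[0]}_k(a)=Z^{[0]}_k(a)\bnn$ while $(Z^{[N+1]}_k(a))^{-1}Z^{[0]}_k(a)$ is a symplectic matrix independent of $k$ (both are fundamental matrices of the \emph{same} system \eqref{Sla} at $\la=a$, so their ``quotient'' is the constant symplectic matrix $(Z^{[N+1]}_0(a))^{-1}Z^{[0]}_0(a)$), the transformed conjoined basis $\tilde Y_k(a)=(Z^{[N+1]}_k(a))^{-1}Y^{[0]}_k(a)$ is a \emph{constant} $2n\times n$ matrix in $k$. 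A constant conjoined basis of a symplectic system has no forward focal points, so $l_d((Z^{[N+1]}(a))^{-1}Y^{[0]}(a),0,N+1)=0$; substituting into the displayed formula and recognizing $n_R(\nu)=n_{Z^{[N+1]}(a)}(\nu)$ yields \eqref{renormeiga}. The argument for $\beta=b$ is the mirror image: now $\tilde Y_k(b)=(Z^{[N+1]}_k(b))^{-1}Y^{[0]}_k(b)$ is constant in $k$, so $l_d((Z^{[N+1]}(b))^{-1}Y^{[0]}(b),0,N+1)=0$, and the displayed identity collapses to \eqref{renormeigb}.

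For the last sentence I would invoke the final part of Theorem~\ref{newmay2019}: under \eqref{finrank} the sum $\sum_{a<\nu\le b}n_R(\nu)$ remains finite as $a\to-\infty$ and the left-hand side of \eqref{renormeigmay2019} counts $\#\{\nu\in\sigma|\,\nu\le b\}$ once $a<\la_1=\min\sigma$; since \eqref{renormeiga} and \eqref{renormeigb} are just \eqref{renormeigmay2019} with the vanishing term removed, the same conclusion transfers directly. The only genuinely delicate point — and the one I would spell out carefully rather than assert — is the claim that $(Z^{[N+1]}_k(\beta))^{-1}Z^{[0]}_k(\beta)$ is independent of $k$: this is the standard fact that for two fundamental solution matrices $Z,\hat Z$ of the \emph{same} linear recurrence one has $\hat Z_{k}^{-1}Z_k=\hat Z_{k+1}^{-1}Z_{k+1}$, because $Z_{k+1}=\cS_k(\beta)Z_k$ and $\hat Z_{k+1}=\cS_k(\beta)\hat Z_k$ give $\hat Z_{k+1}^{-1}Z_{k+1}=\hat Z_k^{-1}\cS_k^{-1}(\beta)\cS_k(\beta)Z_k=\hat Z_k^{-1}Z_k$. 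Everything else is a direct quotation of the already-established Theorem~\ref{newmay2019} together with the trivial observation that a $k$-constant conjoined basis contributes no focal points (its $M_k$ and $P_k$ in \eqref{dnlaosc:E:MTP.def} vanish since $X_{k+1}=X_k$ and $\cB_k(\beta)$ is annihilated on the relevant subspace by the conjoined-basis relations), so no further obstacle is expected.
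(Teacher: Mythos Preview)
Your proof is correct and follows the same route as the paper: apply Theorem~\ref{newmay2019} with $R_k=Z_k^{[N+1]}(\beta)$, $\beta\in\{a,b\}$, and note that $l_d((Z^{[N+1]}(\beta))^{-1}Y^{[0]}(\beta),0,N+1)=0$ because $Z_k^{[N+1]}(\beta)$ and $Z_k^{[0]}(\beta)$ are fundamental matrices of the same system at $\la=\beta$. The only (harmless) slip is your final parenthetical: the vanishing of $M_k,P_k$ follows most directly from the fact that the transformed coefficient matrix at $\la=\beta$ is $\tilde S_k(\beta)=(Z^{[N+1]}_{k+1}(\beta))^{-1}\cS_k(\beta)Z^{[N+1]}_k(\beta)=I$, so its block $\tilde{\mathcal B}_k=0$, rather than from any property of the original $\cB_k(\beta)$.
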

\begin{proof}
For the special choice $R_k:=Z_k^{[N+1]}(\beta),\,\beta\in\{a,b\}$  in \eqref{renormeigmay2019} we have
$l_d((Z^{[N+1]}(\beta))^{-1}Y^{[0]}(\beta),0,N+1)=0$ because $Z_k^{[N+1]}(\beta)$ and $Z_k^{[0]}(\beta)$ solve the same symplectic system \eqref{Sla} for $\la:=\beta,\,\beta\in\{a,b\}.$  Then, all assertions of Theorem~\ref{globoscrel1} follow from Theorem~\ref{newmay2019}.
\end{proof}

\begin{example}\label{exampl1}
Consider an  example given in \cite[Example 2.9]{jE15} which illustrated
 applications of formula \eqref{eig.in.int2} in Theorem~\ref{globosc}. Here we use this example to illustrate applications of \eqref{renormeiga} in Theorem~\ref{globoscrel1}. Consider problem \eqref{Sla},\eqref{E0} for the trigonometric
difference system  with
\begin{equation}\label{matrtrig}
    \cS_{k}(\lambda)=\begin{pmatrix}
                       \cos(\lambda) & \sin(\lambda) \\
                       -\sin(\lambda) & \cos(\lambda) \\
                     \end{pmatrix},\,k\in[0,N]_{\bZ}.
\end{equation}
We have $\Psi(\cS_{k}(\lambda))=I\ge 0,$ then the monotonicity
condition \eqref{Smon} holds for $\lambda\in \mathbb{R}.$ The
principal solution of \eqref{Sla},\eqref{E0} with \eqref{matrtrig}
has the form
$Y_k^{[0]}(\lambda)=[\sin(k\lambda)\,\cos(k\lambda)]^{T},$ then the
finite eigenvalues of this problem $\lambda_{p}=\pi p/(N+1),\,p\in
\mathbb{Z}.$ Point out that the spectrum $\sigma$ of this problem
is unbounded. However one can use
Theorems~\ref{globosc},~\ref{globoscrel1} to calculate the number
of finite eigenvalues in any half-open interval $(a,b].$ The
multiplicities of focal points of $Y^{[0]}_{k}(\la)$ in $(k,k+1]$
are given by
\begin{equation}\label{ad2}
    m_{d}(Y_k^{[0]}(\lambda))=\left\{
           \begin{array}{ll}
                          1, & \hbox{$\lambda=\pi p/(k+1),\,\lambda\neq\pi l,\,p,l\in\mathbb{Z}$;} \\
             1, & \hbox{$\sin(\lambda)\sin(k\lambda)\sin((k+1)\lambda)<0$;} \\
             0, & \hbox{$\text{otherwise}$.}
           \end{array}
         \right.
\end{equation}
So we see that
$m_{d}(Y_k^{[0]}(\lambda))=m_{d}(Y_k^{[0]}(\lambda+\pi l)),\,l\in
\mathbb{Z}$ and then $l_{d}(Y^{[0]}(\lambda),0,N+1)$ is periodic
with the minimal period $T=\pi$ and nondecreasing in any interval
$[a,b]\subseteq [\pi l,\pi(l+1)),\,l\in \mathbb{Z}.$ We have
\begin{equation*}\begin{array}{l}
l_{d}(Y^{[0]}(\lambda),0,N+1)=\left\{
                 \begin{array}{ll}
                   0, & \hbox{$\lambda\in[0,\pi/(N+1)$;} \\
                   1, & \hbox{$\lambda\in[\pi/(N+1),2\pi/(N+1)$;} \\
                   \dots & \hbox{$\dots$} \\
                   N, & \hbox{$\lambda\in [N\pi/(N+1),\pi)$;}
                 \end{array}
               \right.\\[2mm] l_{d}(Y^{[0]}(\lambda),0,N+1)=l_{d}(Y^{[0]}(\lambda+\pi l),0,N+1),\quad l\in
\mathbb{Z}.\end{array}\end{equation*}
Put
\begin{equation}\label{ad8}
  a=q\pi/(N+1),\,b=r\pi/(N+1),\quad r>q,\,r,q\in\mathbb{Z}.
\end{equation}
Then there are $r-q$ finite eigenvalues of \eqref{Sla},\eqref{E0} located in $(a,b].$
For example, if $r=N,\,q=N-1,\,N>0$ then, in \eqref{eig.in.int2}
$$\sum\limits_{a<\nu\le b}\sum\limits_{k=0}^{N}\vartheta_{k}(\nu)=0,
\;l_d(Y^{[0]}(b),0,N+1)=N, \; l_d(Y^{[0]}(a),0,N+1)=N-1,$$ and the
number $\#\{\nu\in\sigma |a <\nu\le b\}$ of finite eigenvalues in
$(a,b]$ is equal to~$1.$  Note that to apply \eqref{eig.in.int2} we
have to calculate $\#\{\nu\in\sigma |a <\nu\le b\}$ using the pair
$Y^{[0]}(a),\,Y^{[0]}(b)$ of highly oscillatory (for $N\gg 1$)
conjoined bases of \eqref{Sla}. In contrast, applying
\eqref{renormeiga} in Theorem~\ref{globoscrel1} we deal with only
one slowly oscillatory conjoined basis $\tY_k(b)$ of \eqref{tSla}
with $R_k:=Z_k^{[N+1]}(a).$ Indeed,
 the matrix $ Z_k^{[N+1]}(\la),\,k=0,\dots,N+1$ has the form
\begin{equation*}
  Z_k^{[N+1]}(\la)=\begin{pmatrix}
                       \cos((N-k+1)\lambda) & \;-\sin((N-k+1)\lambda) \\
                       \sin((N-k+1)\lambda) &\; \cos((N-k+1)\lambda) \\
                     \end{pmatrix},
\end{equation*}
then
\begin{equation}\label{ad1}
   \tY_k(b)=(Z_k^{[N+1]}(a))^{-1}Y^{[0]}(b)=\binom{\sin(\beta_k)}{\cos(\beta_k)},\,\beta_k=(b-a)k+(N+1)a
\end{equation}
and by analogy with \eqref{ad2} we have
 \begin{equation}\label{ad7}
    m_{d}(\tY_k(b))=\left\{
           \begin{array}{ll}
                          1, & \hbox{$\beta_{k+1}=\pi p,\,b-a\neq\pi l,\,p,l\in\mathbb{Z}$;} \\
             1, & \hbox{$\sin(b-a)\sin(\beta_k)\sin(\beta_{k+1})<0$;} \\
             0, & \hbox{$\text{otherwise}$,}
           \end{array}
         \right.\quad k\in [0,N]_{\sZ},
\end{equation}
where $\beta_k$ is given by \eqref{ad1}.
The matrix $\tS_k(\la)$ associated with $R_k:=Z_k^{[N+1]}(a)$ has the form
 $ \tS_k(\la)=\cS_k^{-1}(a)\cS_k(\la)$, then $\tcB_k(\la)=\sin(\la-a).$ We see that for the case
 $a=\pi(N-1)/(N+1),\,b=\pi N/(N+1)$ the matrix
 $\tB_k(\la)$ has constant rank for $\la\in(a,b]$, i.e.,
 $\sum\limits_{a<\nu\le b}n_{Z^{[N+1]}(a)}(\nu)=0,$ while according to \eqref{ad7} $m_d(\tY_N(b))=~1,\,m_d(\tY_k(b))=~0,\,k\ne N.$ Finally, in this case $l_d(\tY(b),0,N+1)=1$ calculates the number of finite eigenvalues in $(a,b].$

Consider another situation when the block of $\cB_k(\la)$ of the matrix $\cS_k(\la)$ has nonconstant rank in $(a,b].$ Assume that $a,\,b$ are given by \eqref{ad8}, where $q=N$ and $r=N+2,$ then $\rank\cB_k(\la)=0$ for $\la=\pi\in(a,b]$ and for the given case $\sum\limits_{a<\nu\le b}\sum\limits_{k=0}^{N}\vartheta_{k}(\nu)=N+1,$ $l_d(Y^{[0]}(b),0,N+1)=1$, $l_d(Y^{[0]}(a),0,N+1)=N,$ then, according to \eqref{eig.in.int2} we  have the correct result $1-N+N+1=2$ for the number of finite eigenvalues of \eqref{Sla},\eqref{E0}. Applying Theorem~\ref{globoscrel1} instead of Theorem~\ref{globosc} we again have that $\tB_k(\la)=\sin(\la-a)$ has constant rank in $(a,b],$ i.e., $\sum\limits_{a<\nu\le b}n_{Z^{[N+1]}(a) }(\nu)=0,$ while according to \eqref{ad7} $m_d(\tY_N(b))=1,$ and $m_d(\tY_l(b))=1$ for $l=\lfloor N/2 \rfloor.$
Finally, as before $l_d(\tY(b),0,N+1)=2$ calculates the number of finite eigenvalues in $(a,b].$
So we see that in this situation we deal with the slowly oscillatory conjoined basis $\tY_k(b)$ and the nonoscillatory coefficient matrix $\tS_k(\la).$
\end{example}

By a similar way one can prove an
analog of Theorem~\ref{forward} for the time-reversed system
\eqref{gensys}.
\begin{theorem}\label{backward}
 Consider  symplectic system \eqref{gensys} under assumption \eqref{PsiWk}.
Suppose that for $k=N+1$ a symplectic fundamental matrix
$\cZ_{k}(\la)$ of \eqref{gensys} is piecewise continuously
differentiable and obeys condition \eqref{monZ*}.
Then  for a conjoined basis of \eqref{gensys}, such that
$\cY_{k}(\la):=\cZ_{k}(\la)\bnn$ we have
\begin{align} \label{loccon*}
  \Delta l_c(\cY_k,a,b) +l_c(W_k\bnn,a,b)&=\mu^*(\cY_{k}(\la),W_k^{-1}(\la)\bnn)|_b^a\\&=m_d^*(\cY_k(a))-m_d^*(\cY_k(b)).\notag
\end{align}
Moreover,
\begin{align}\label{locglob*}
 l_c(\cY_{N+1},a,b)&-l_c(\cY_0,a,b) +\sum\limits_{k=0}^{N} l_c(W_k\bnn,a,b)\\&=l_d^*(\cY(a),0,N+1)-l_d^*(\cY(b),0,N+1).\notag
\end{align}
\end{theorem}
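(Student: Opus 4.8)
The plan is to mirror the proof of Theorem~\ref{forward}, but applied to the time-reversed situation, using Proposition~\ref{psiZmod}(iii) in place of Proposition~\ref{psiZmod}(ii) and using \eqref{backwind} in place of \eqref{forwind}. First I would observe that by Proposition~\ref{psiZmod}(iii), the hypothesis \eqref{monZ*} at $k=N+1$ propagates: since $\cZ_{N+1}(\la)\in C_p^1$ and $\Psi(W_k(\la))\ge 0$, we get $\cZ_k(\la)\in C_p^1$ and $\Psi(\cZ_k(\la))\le 0$ for all $k\in[0,N+1]_{\sZ}$; equivalently, by \eqref{monZP*}, $\Psi(P_3\cZ_k(\la)P_3)\ge 0$. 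The key point is to recast the backward focal point multiplicities as forward ones for a suitably dualized system so that Theorem~\ref{maincompar} applies directly.

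Concretely, to prove \eqref{loccon*} I would fix $k\in[0,N]_{\sZ}$ and apply Theorem~\ref{maincompar} with $Z(\la):=P_3\cZ_k(\la)^{-1}P_3$, $\hZ(\la):=P_3 W_k(\la)^{-1}P_3$, so that $\tZ(\la)=\hZ^{-1}(\la)Z(\la)=P_3 W_k(\la)\cZ_k^{-1}(\la)P_3 = P_3\cZ_{k+1}^{-1}(\la)P_3$ (using $\cZ_{k+1}=W_k\cZ_k$). The majorant condition \eqref{majham} holds because, by Proposition~\ref{psiZmod}(i),(iii), both $\Psi(P_3\cZ_k^{-1}P_3)$ and $\Psi(P_3 W_k^{-1}P_3)$ are $\ge 0$, and the difference inequality $\Psi(\hZ^{-1}Z)\ge 0$ follows from the cocycle identity $\Psi(\cZ_k(\la))=\Psi(W_k^{-1}(\la))+W_k^T(\la)\Psi(\cZ_{k+1}(\la))W_k(\la)$ already used in the proof of Proposition~\ref{psiZmod}; the Legendre-type condition $\phi(\hZ(\la))\ge 0$ is the statement that the lower-right block of $P_3\Psi(W_k^{-1})P_3$ is nonnegative. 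Then \eqref{elyscond51glob} reads
\begin{equation*}
l_c(\hY,a,b)-l_c(Y,a,b)+l_c(\tY,a,b)=\mu(Y(a),\hY(a))-\mu(Y(b),\hY(b)),
\end{equation*}
and by \eqref{dualrep} together with the identification of backward focal points via \eqref{backwind}, the left side becomes $-l_c(\cY_{k+1},a,b)+l_c(\cY_k,a,b)+l_c(W_k\bnn,a,b)$ while the right side becomes $\mu^*(\cY_k(\la),W_k^{-1}(\la)\bnn)\big|_b^a$, which by \eqref{backwind} equals $m_d^*(\cY_k(a))-m_d^*(\cY_k(b))$. This is precisely \eqref{loccon*}, noting $-\Delta l_c(\cY_k,a,b)=l_c(\cY_k,a,b)-l_c(\cY_{k+1},a,b)$, i.e. $\Delta l_c(\cY_k,a,b)+l_c(W_k\bnn,a,b)=m_d^*(\cY_k(a))-m_d^*(\cY_k(b))$ after rearranging. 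Summing over $k=0,\dots,N$ telescopes the $\Delta l_c$ term and yields \eqref{locglob*}.

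The main obstacle I anticipate is bookkeeping with the $P_3$-conjugation: one must be careful that $l_c$, $\mu$, and $m_d^*$ all transform correctly under $Y\mapsto P_3 Y$ and under passing from a fundamental matrix $\cZ_k$ to its inverse $\cZ_k^{-1}$, and that the conjoined-basis and piecewise-$C^1$ hypotheses are preserved (the latter via Proposition~\ref{psiZmod}(iii)). Once the correct duality dictionary is set up — $P_3$-conjugate, invert the fundamental matrix, swap $\mu\leftrightarrow\mu^*$ and $m_d\leftrightarrow m_d^*$, reverse the sign of $\Delta$ — the argument is a line-by-line transcription of the proof of Theorem~\ref{forward}, and the assertion "by a similar way" in the statement is justified. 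I would also remark that \eqref{locglob*} is consistent with \eqref{locglob} and \eqref{0-N+1} in the special case $\cY_k=Y_k^{[N+1]}(\la)$, providing a sanity check.
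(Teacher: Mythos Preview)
Your overall strategy is exactly the paper's: dualize via $P_3$, invoke Theorem~\ref{maincompar}, translate the comparative indices using \eqref{dualrep} and \eqref{backwind}, and sum. However, your specific choice of $Z,\hZ$ contains a genuine error, and it is precisely the bookkeeping you flagged as the ``main obstacle.''

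You take $Z(\la)=P_3\cZ_k^{-1}(\la)P_3$ and then assert $\tZ(\la)=\hZ^{-1}Z=P_3 W_k(\la)\cZ_k^{-1}(\la)P_3=P_3\cZ_{k+1}^{-1}(\la)P_3$. This is false: from $\cZ_{k+1}=W_k\cZ_k$ one gets $\cZ_{k+1}^{-1}=\cZ_k^{-1}W_k^{-1}$, not $W_k\cZ_k^{-1}$. The two agree only when $W_k$ and $\cZ_k$ commute. In the same vein, your monotonicity claim $\Psi(P_3\cZ_k^{-1}P_3)\ge 0$ has the wrong sign: since $\Psi(\cZ_k)\le 0$ one has $\Psi(\cZ_k^{-1})\ge 0$, hence $\Psi(P_3\cZ_k^{-1}P_3)=-P_3\Psi(\cZ_k^{-1})P_3\le 0$. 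Consequently $\Psi(\hZ^{-1}Z)=\Psi(P_3 W_k\cZ_k^{-1}P_3)\le 0$ as well, so hypothesis \eqref{majham} of Theorem~\ref{maincompar} is not met by your choices.

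The fix is simply to drop the extra inversion on $\cZ_k$: the paper takes
\[
Z(\la):=P_3\cZ_k(\la)P_3,\qquad \hZ(\la):=P_3 W_k^{-1}(\la)P_3,\qquad \tZ(\la)=\hZ^{-1}Z=P_3\cZ_{k+1}(\la)P_3.
\]
Then $\Psi(Z)\ge 0$ and $\Psi(\tZ)=\Psi(\hZ^{-1}Z)\ge 0$ follow directly from \eqref{monZP*}, and $\phi(\hZ)\ge 0$ from \eqref{invdual}. Since $P_3\bnn=\bnn$, the associated conjoined bases are $Y=P_3\cY_k$, $\tY=P_3\cY_{k+1}$, $\hY=P_3 W_k^{-1}\bnn$; the upper block of $P_3 W_k^{-1}\bnn$ has the same rank as $\cB_k(\la)$ (via $W_k^{-1}=-\cJ W_k^T\cJ$), giving $l_c(\hY,a,b)=l_c(W_k\bnn,a,b)$. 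The rest of your argument --- the translation of the right-hand side through \eqref{dualrep}, \eqref{backwind} into $m_d^*(\cY_k(a))-m_d^*(\cY_k(b))$ and the telescoping sum for \eqref{locglob*} --- then goes through verbatim.
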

\begin{proof}
By Proposition~\ref{psiZmod}(iii) we have  that conditions
\eqref{monZ*}, \eqref{monZP*} hold for $\cZ_{k}(\la)$ with $k\in
[0,N+1]_{\sZ}.$ Then, we prove \eqref{loccon*} applying
Theorem~\ref{maincompar} to the case $\hat
Z(\la):=P_3W_k^{-1}(\la)P_3$, $Z(\la):=P_3\cZ_{k}(\la)P_3,$ and
then $\tZ(\la):=\hat Z^{-1}(\la)Z(\la)=P_3\cZ_{k+1}(\la)P_3$ for
$k=0,\dots,N,$ where the matrix $P_3$ is given by \eqref{dualrep}.
It is clear that under this settings assumptions \eqref{majham} of
Theorem~\ref{maincompar} are satisfied. Note also that by
Definition~\ref{multcon} and the relation $W_k^{-1}(\la)=-\cJ
W_k^{T}(\la)\cJ$ we have
$l_c(P_3W^{-1}_kP_3\bnn,a,b)=l_c(W_k\bnn,a,b).$ Applying
\eqref{elyscond51glob} and using \eqref{dualrep} and
\eqref{backwind} for the comparative indices
$\mu(P_{3}\cY_{k}(\la),P_{3}W_{k}^{-1}(\la)P_3\bnn)=\mu^{*}(\cY_{k}(\la),W_{k}^{-1}(\la)\bnn)=m_{d}^{*}(\cY_{k}(\la))$
for $\la=a$ and $\la=b$ in the right hand side of
\eqref{elyscond51glob} we derive identity \eqref{loccon*}.

Summing \eqref{loccon*} from $k=0$ to $k=N$ we derive
\eqref{locglob*}. The proof is completed.
\end{proof}

Consider  the
transformation $\tY_{k}(\la)=R_{k}^{-1}Y_{k}^{[N+1]}(\la)$  of the
principal solution $Y_{k}^{[N+1]}(\la)$ of \eqref{Sla} at $k=N+1$ with the
symplectic transformation matrix $R_{k}$ which does not depend on
$\la$ and obeys the condition
\begin{equation}\label{left}
    R_{0}=I.
\end{equation}
Using the notation \eqref{jumpt} and \eqref{newnot1} associated with the transformation matrix $R_k$ with property \eqref{left} one can formulate the following analog of Theorem~\ref{newmay2019} for backward focal points.

\begin{theorem}\label{newmay2019*}
Assume \eqref{dnlaosc:E:Intro.4} and \eqref{Smon}, then for the
conjoined basis $\tY_{k}(\la)=R_{k}^{-1}Y_{k}^{[N+1]}(\la)$ of
\eqref{tSla}, where the symplectic matrix $R_{k}$ obeys condition
\eqref{left} we have the following formula
\begin{equation}\label{renormeigmay2019*}\begin{aligned}
    l_d^*(R^{-1}Y^{[N+1]}(b),0,N+1)&-l_d^*(R^{-1}Y^{[N+1]}(a),0,N+1)\\&+\sum\limits_{a<\nu\le b}n_R(\nu)=\#\{\nu\in\sigma|\,a<\nu\le b\}.
 \end{aligned} \end{equation}
connecting the number of finite eigenvalues of \eqref{Sla},\eqref{E0} in
$(a,b]$ with the number of backward focal points of the conjoined
basis $\tY_k(\la)$ and the numbers given by \eqref{jumpt} and \eqref{newnot1}.

Moreover, under assumption \eqref{finrank} the sum
$\sum\limits_{a<\nu\le b}n_R(\nu)$ is finite as $a\rightarrow -\infty,$ then  for any $a<\la_{1}=\min \sigma$ the left-hand side of \eqref{renormeigmay2019*} presents the number $\#\{\nu\in\sigma|\,\nu\le b\}$ of finite eigenvalues of \eqref{Sla},\eqref{E0} less then or equal to $\la:=b.$
\end{theorem}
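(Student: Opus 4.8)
The plan is to mimic the proof of Theorem~\ref{newmay2019}, but now invoking the backward version of the focal point calculus collected in Theorem~\ref{backward} in place of Theorem~\ref{forward}. First I would observe that since $R_k\in\Sp$ does not depend on $\la$ and $Y_k^{[N+1]}(\la)$ is the principal solution of \eqref{Sla} at $k=N+1$, the transformed sequence $\tY_k(\la)=R_k^{-1}Y_k^{[N+1]}(\la)$ is a conjoined basis of \eqref{tSla}; by Remark~\ref{constInitConj}(iii) the transformed coefficient matrix $\tS_k(\la)=R_{k+1}^{-1}\cS_k(\la)R_k$ satisfies the monotonicity condition \eqref{tSmon}, and by Proposition~\ref{psiZmod}(iii),(iv) the fundamental matrix $\tZ_k(\la):=R_k^{-1}Z_k^{[N+1]}(\la)$ satisfies \eqref{monZ*} (equivalently \eqref{monZP*}) with $\tZ_{N+1}(\la)=R_{N+1}^{-1}$ independent of $\la$. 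Hence the hypotheses of Theorem~\ref{backward} hold with $W_k(\la):=\tS_k(\la)$ and $\cZ_k(\la):=\tZ_k(\la)$, so that $\cY_k(\la):=\tZ_k(\la)\bnn=\tY_k(\la)$.

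Next I would apply formula \eqref{locglob*} of Theorem~\ref{backward} to this data, obtaining
\begin{equation*}
 l_c(\tY_{N+1},a,b)-l_c(\tY_{0},a,b)+\sum\limits_{k=0}^{N} l_c(\tS_k\bnn,a,b)=l_d^*(\tY(a),0,N+1)-l_d^*(\tY(b),0,N+1).
\end{equation*}
Now I would evaluate the three continuous terms. Since $R_0=I$ by \eqref{left}, we have $\tY_0(\la)=R_0^{-1}Y_0^{[N+1]}(\la)=Y_0^{[N+1]}(\la)$, whose lower block-free structure gives $l_c(Y_0^{[N+1]},a,b)=\#\{\nu\in\sigma\mid a<\nu\le b\}$ — this is the content of \eqref{confineigproper} applied at the left endpoint $k=0$, which holds by the analog of Definition~\ref{dnlaosc:D:finite.eigenvalue} phrased at $k=0$ (i.e., via relation \eqref{0-N+1} and the fact that a finite eigenvalue is detected by the rank drop of $X_0^{[N+1]}(\la)$). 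On the other hand $\tY_{N+1}(\la)=R_{N+1}^{-1}Y_{N+1}^{[N+1]}(\la)=R_{N+1}^{-1}\bnn$ does not depend on $\la$, so $l_c(\tY_{N+1},a,b)=0$. Finally, by \eqref{conBproper} applied to the transformed system together with Remark~\ref{permutorder}(ii) (finiteness of the inner sums lets one swap the order of summation), $\sum_{k=0}^{N} l_c(\tS_k\bnn,a,b)=\sum_{k=0}^{N}\sum_{a<\nu\le b}\tilde\vartheta_k(\nu)=\sum_{a<\nu\le b}n_R(\nu)$ by \eqref{jumpt} and \eqref{newnot1}. Substituting these three evaluations gives exactly \eqref{renormeigmay2019*}.

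The second assertion I would handle exactly as in the proof of Theorem~\ref{newmay2019}: rearranging \eqref{renormeigmay2019*} and using estimate \eqref{eatnum} (now for $m_d^*$) yields
\begin{equation*}
 \left|\sum\limits_{a<\nu\le b}n_R(\nu)-\#\{\nu\in\sigma\mid a<\nu\le b\}\right|=\left|l_d^*(R^{-1}Y^{[N+1]}(a),0,N+1)-l_d^*(R^{-1}Y^{[N+1]}(b),0,N+1)\right|\le n(N+1),
\end{equation*}
so the sum $\sum_{a<\nu\le b}n_R(\nu)$ stays finite as $a\to-\infty$ precisely when $\sigma$ is bounded below; by \cite[Corollary 2.5]{jE15} this is equivalent to \eqref{finrank} (equivalently \eqref{finspectr}), and then for $a<\la_1=\min\sigma$ the left side of \eqref{renormeigmay2019*} counts $\#\{\nu\in\sigma\mid\nu\le b\}$.

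The main obstacle I anticipate is purely bookkeeping at the boundary index $k=0$: one must be careful that the continuous multiplicity $l_c(Y_0^{[N+1]},a,b)$ really equals the number of finite eigenvalues, i.e. that the finite-eigenvalue count is symmetric between the $k=N+1$ detector $\rank X_{N+1}^{[0]}(\la)$ of Definition~\ref{dnlaosc:D:finite.eigenvalue} and the $k=0$ detector for $Y^{[N+1]}$. This is exactly what relation \eqref{0-N+1} and \eqref{confineigproper} encode, so it reduces to quoting those; but it is the one spot where the ``reversed'' setup differs structurally from Theorem~\ref{newmay2019} and deserves an explicit sentence rather than being swept into ``by a similar way''.
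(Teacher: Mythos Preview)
Your proposal is correct and follows essentially the same route as the paper: apply \eqref{locglob*} from Theorem~\ref{backward} with $W_k(\la):=\tS_k(\la)$ and $\cY_k(\la):=R_k^{-1}Y_k^{[N+1]}(\la)$, then evaluate the boundary terms using $R_0=I$ and the constancy of $\tY_{N+1}(\la)$, and finish the second assertion via the estimate \eqref{eatnum} exactly as in Theorem~\ref{newmay2019}.

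The one place where your justification should be sharpened is, as you anticipated, the identity $l_c(Y_0^{[N+1]},a,b)=\#\{\nu\in\sigma\mid a<\nu\le b\}$. Your appeal to \eqref{0-N+1} is not the right tool: that relation equates the \emph{discrete} totals $l_d(Y^{[0]}(\la),0,N+1)$ and $l_d^*(Y^{[N+1]}(\la),0,N+1)$, and says nothing about the rank of $X_0^{[N+1]}(\la)$ at a single $\la$. The paper's argument here is direct: the Wronskian $w(Y_k^{[0]}(\la),Y_k^{[N+1]}(\la))$ is constant in $k$, and evaluating it at $k=0$ and $k=N+1$ gives $-X_0^{[N+1]}(\la)=X_{N+1}^{[0]\,T}(\la)$, so $\rank X_0^{[N+1]}(\la)=\rank X_{N+1}^{[0]}(\la)$ for every $\la$, and then \eqref{confineigproper} applies verbatim. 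With this repair your proof coincides with the paper's.
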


\begin{proof}
Applying \eqref{locglob*} to the case $\cY_k(\la):=\tilde
Y_k(\la)=R_{k}^{-1}\,Y_k^{[N+1]}(\la)$ and $W_k(\la):=\tS_k(\la)$ we
have
\begin{equation}\label{ad4*}
\begin{aligned}
 l_c(\tY_{N+1},a,b)&-l_c(\tY_{0},a,b) +\sum\limits_{k=0}^{N} l_c(\tS_k\bnn,a,b)
 \\&=l_d^*(\tY(a),0,N+1)-l_d^*(\tY(b),0,N+1).
\end{aligned}
\end{equation}
We see that $\tY_{N+1}=R_{N+1}^{-1}Y_{N+1}^{[N+1]}(\la)=R_{N+1}^{-1}\bnn$
does not depend on $\la,$ then $l_c(\tY_{N+1},a,b)=0,$ while
$\tY_{0}(\la)=R_{0}^{-1}Y_{0}^{[N+1]}(\la)=Y_{0}^{[N+1]}(\la)$
due to condition \eqref{left}. According to \eqref{conBproper} we have
 $\sum\limits_{k=0}^{N} l_c(\tS_k\bnn,a,b)=\sum\limits_{a<\nu\le b}n_R(\nu).$ Similarly,
$l_c(Y_{0}^{[N+1]},a,b)=l_c(Y_{N+1}^{[0]},a,b)=\#\{\nu\in\sigma|\,a<\nu\le
b\}$ because of \eqref{confineigproper} and the property
$ - X_0^{[N+1]}(\la)=X_{N+1}^{[0]\;T}(\la),\,\la\in\bR$
which follows from the Wronskian identity $$w(Y_k^{[0]}(\la),Y_k^{[N+1]}(\la))=Y_k^{[0]\;T}(\la)\cJ Y_k^{[N+1]}(\la)=\const$$ for the principal solutions $Y_k^{[0]}(\la),\,Y_k^{[N+1]}(\la)$ of \eqref{Sla}.

Substituting these equalities into \eqref{ad4*} we complete the
proof of \eqref{renormeigmay2019*}.

As in the proof of Theorem~\ref{newmay2019} we have from
\eqref{renormeigmay2019*}
\begin{equation*}
\begin{aligned}
&\left |\sum\limits_{a<\nu\le b}n_R(\nu) -\#\{\nu\in\sigma|\,a<\nu\le b\}\right|\\&=\left|l_d^*(R^{-1}Y^{[0]}(a),0,N+1)-l_d^*(R^{-1}Y^{[0]}(b),0,N+1)\right|\le n(N+1),
\end{aligned}
\end{equation*}
where we have used estimate \eqref{eatnum}. Repeating the same
arguments as in the proof of Theorem~\ref{newmay2019} we complete
the proof of Theorem~\ref{newmay2019*}.
\end{proof}
For the special choice of the transformation matrix
    $R_{k}:=Z_{k}^{[0]}(a)$
or
   $ R_{k}:=Z_{k}^{[0]}(b)$
in the formulation of Theorem~\ref{newmay2019*} we derive the
renormalized oscillation theorem for  backward focal points.

\begin{theorem}\label{globoscrel1*}
Under the assumptions of Theorem~\ref{newmay2019*} consider the transformation matrix $R_{k}:=Z_{k}^{[0]}(a)$. Then formula \eqref{renormeigmay2019*} takes the form
\begin{equation}\label{renormeiga*}
    l^*_d((Z^{[0]}(a))^{-1}Y^{[N+1]}(b),0,N+1)+ \sum\limits_{a<\nu\le b}n_{Z^{[0]}(a)}(\nu)=\#\{\nu \in\sigma|\,a<\nu\le b\},
  \end{equation}
while for the choice of $R_k$ in form $R_{k}:=Z_{k}^{[0]}(b)$ we have instead of \eqref{renormeigmay2019*}
\begin{equation}\label{renormeigb*}
   - l^*_d((Z^{[0]}(b))^{-1}Y^{[N+1]}(a),0,N+1)+ \sum\limits_{a<\nu\le b}n_{Z^{[0]}(b)}(\nu)=\#\{\nu \in\sigma|\,a<\nu\le b\}.
  \end{equation}

Moreover,  under assumption \eqref{finrank}  formulae \eqref{renormeiga*} and \eqref{renormeigb*} with $a<\la_1=\min \sigma$ present the number $\#\{\nu\in\sigma|\,\nu\le b\}$ of finite eigenvalues of \eqref{Sla},\eqref{E0} less then or equal to $\la:=b.$
\end{theorem}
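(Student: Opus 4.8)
The plan is to derive Theorem~\ref{globoscrel1*} as a direct specialization of Theorem~\ref{newmay2019*}, exactly in parallel to how Theorem~\ref{globoscrel1} was obtained from Theorem~\ref{newmay2019}. First I would verify that the two proposed transformation matrices are admissible in Theorem~\ref{newmay2019*}: both $R_k:=Z_k^{[0]}(a)$ and $R_k:=Z_k^{[0]}(b)$ are symplectic, do not depend on $\la$, and satisfy the normalization \eqref{left}, namely $R_0=Z_0^{[0]}(\beta)=I$ for $\beta\in\{a,b\}$, since $Z_k^{[0]}(\la)$ is by definition the fundamental matrix with $Z_0^{[0]}(\la)=I$. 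Hence Theorem~\ref{newmay2019*} applies and yields \eqref{renormeigmay2019*} with $R=Z^{[0]}(\beta)$.

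The key step is then to observe that the term $l_d^*(R^{-1}Y^{[N+1]}(\beta),0,N+1)$ vanishes when $R_k:=Z_k^{[0]}(\beta)$. This is because both $Z_k^{[0]}(\beta)$ and $Z_k^{[N+1]}(\beta)$ are symplectic fundamental matrices of the \emph{same} system \eqref{Sla} evaluated at $\la=\beta$, so $(Z_k^{[0]}(\beta))^{-1}Z_k^{[N+1]}(\beta)$ is a constant symplectic matrix in $k$; consequently $(Z_k^{[0]}(\beta))^{-1}Y_k^{[N+1]}(\beta)=(Z_k^{[0]}(\beta))^{-1}Z_k^{[N+1]}(\beta)\bnn$ is a conjoined basis whose leading block is constant in $k$, and by \eqref{propfocf}/\eqref{eatnum} or directly from the definition of $m_d^*$ via \eqref{backwind} all its backward focal point multiplicities are zero, giving $l_d^*((Z^{[0]}(\beta))^{-1}Y^{[N+1]}(\beta),0,N+1)=0$. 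This is the same mechanism used in the proof of Theorem~\ref{globoscrel1}.

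Substituting $\beta=a$ into \eqref{renormeigmay2019*} kills the $\la=a$ term and produces \eqref{renormeiga*}; substituting $\beta=b$ kills the $\la=b$ term and produces \eqref{renormeigb*}, with the remaining sign as displayed. Finally, the statement under \eqref{finrank} follows verbatim from the corresponding "moreover" clause of Theorem~\ref{newmay2019*}: condition \eqref{finrank} is equivalent to the spectrum being bounded from below, so for $a<\la_1=\min\sigma$ the interval $(a,b]$ contains all finite eigenvalues $\le b$, and the sum $\sum_{a<\nu\le b}n_R(\nu)$ is finite in that limit.

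I do not expect a genuine obstacle here; the content is entirely a bookkeeping specialization of Theorem~\ref{newmay2019*}. The only point requiring a line of care is the vanishing of the backward-focal-point term, i.e.\ confirming that two fundamental solutions of the same symplectic system at a fixed $\la$ differ by a $k$-independent right factor and hence the associated transformed conjoined basis has a $k$-constant upper block; this is immediate from the uniqueness of solutions of \eqref{Sla}.
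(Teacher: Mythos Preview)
Your proposal is correct and follows essentially the same approach as the paper: the paper's proof simply states that the result follows from Theorem~\ref{newmay2019*} together with the vanishing $l_d^*((Z^{[0]}(\beta))^{-1}Y^{[N+1]}(\beta),0,N+1)=0$ for $\beta\in\{a,b\}$, referring back to the argument in the proof of Theorem~\ref{globoscrel1}. Your write-up just spells out a few more details (admissibility of $R_k$ via \eqref{left}, and the reason for the vanishing via the constancy of $(Z_k^{[0]}(\beta))^{-1}Z_k^{[N+1]}(\beta)$ in $k$); the only cosmetic slip is the reference to \eqref{propfocf}, which concerns continuous focal points---the clean justification is that $\tS_k(\beta)=I$, hence $\tcB_k(\beta)=0$, and \eqref{eatnum} gives $m_d^*=0$.
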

\begin{proof} The proof follows from Theorem~\ref{newmay2019*}
using that
$l^*_d(Z^{[0]\,-1}(\beta)Y^{[N+1]}(\beta),0,N+1)=0,\,\beta\in\{a,b\}$
(see the proof of Theorem~\ref{globoscrel1}).
\end{proof}
\section{Relative oscillation numbers and renormalized oscillation theorems}\label{sec4}
In this section we present another approach to the results of the renormalized and relative oscillation theory based on discrete comparison theorems. This approach is presented in  \cite[Section 6.1]{bookDEH} under  restriction \eqref{rBconst}. Here we generalize the results from \cite[Section 6.1]{bookDEH} referred to the renormalized theory deleting condition \eqref{rBconst}.

For the first step in this direction we recall
  the discrete comparison theorem, see \cite[Theorem 2.1]{E-DE-2010}, \cite[Theorem 3.3]{D-E-JDEA} and a notion of the \textit{relative oscillation numbers} (see  \cite{E-ADV}, \cite[Definition 3.2]{D-E-JDEA}, \cite[Section~4.2]{bookDEH}).
Introduce the notation
\begin{gather}\label{langl}
    \langle  \cS\rangle=\begin{pmatrix}
                          \mathcal X \\
                          \mathcal U \\
                        \end{pmatrix},\,\mathcal X=\begin{pmatrix}
                                                     I & 0 \\
                                                     \cA & \cB \\
                                                   \end{pmatrix},\,\mathcal U=\begin{pmatrix}
                                                     0 & -I \\
                                                     \cC & \cD \\
                                                   \end{pmatrix},
    \quad  \cS=\begin{pmatrix}
                                 \cA & \cB \\
                                 \cC & \cD \\
                               \end{pmatrix}
\end{gather}
for  $ \cS\in \Sp$ separated into $n\times n$
blocks $\cA,\,\cB,\, \cC,\,\cD.$ In \cite[Lemma~2.3]{E-DE-2010} we
proved that $4n\times 2n$ matrices  $\langle \cS \rangle,\,\langle
\hS \rangle$ associated with  $\cS,\,\hS\in \Sp$ obey
\eqref{conjoined} (with $n$ replaced by $2n$) and then the
comparative index for the pair $\langle \cS \rangle,\,\langle \hS
\rangle$ is well defined. The main properties of the comparative index $\mu(\langle \cS\rangle,\langle \hS\rangle)$ are proved in \cite[Lemma 2.3]{E-DE-2010} (see also \cite[Section 3.3]{bookDEH}).

Recall  the notion of the {relative
oscillation numbers} for two symplectic difference systems.

\begin{definition}                              \label{relnum}
Let $Y_k,\,\hY_k$ be conjoined bases of the symplectic systems
\begin{equation}\label{twosys}
  Y_{k+1}=\cS_k Y_k,\quad \hY_{k+1}=\hS_k \hY_k
\end{equation}
 associated with symplectic fundamental matrices
$Z_k,\,\hZ_k$ such that \eqref{sympmatr} hold.

Then the relative oscillation number is defined as
\begin{equation}                                  \label{relnumb}
\begin{aligned}
    \#_k(\hZ, Z)&=\mu(\langle  \hZ_{k}^{-1}Z_{k}
\rangle,\langle  \hZ_{k+1}^{-1} Z_{k+1}\rangle)-\mu(\langle
\hS_{k} \rangle,\langle   \cS_{k} \rangle)
\\[2mm]
&=\mu(\langle \cS_{k} \rangle,\langle  \hS_{k} \rangle)-
\mu(\langle  \hZ_{k+1}^{-1}Z_{k+1}
\rangle,\langle  \hZ_{k}^{-1} Z_{k}\rangle) .
\end{aligned}
\end{equation}
\end{definition}

The following comparison result was proved in  \cite[Theorem
2.1]{E-DE-2010}.
\begin{theorem}                                 \label{compar}
Let $Y_k,\,\hY_k$ be conjoined bases of \eqref{twosys} associated
with symplectic fundamental matrices $Z_k,\,\hZ_k$ such that
\eqref{sympmatr} holds, then
\begin{equation}\label{var41}
l_d(\hY,0,N+1)-l_d(Y,0,N+1)+\mu(Y_{k},\hY_{k})|_0^{N+1}=\#(\hZ,Z,0,N),\end{equation}
where
\begin{equation}\label{globrelnu}
  \#(\hZ,Z,0,N)=\sum\limits_{k=0}^{N}\#_k(\hZ, Z)
\end{equation}
and $l_d(\hY, 0,N+1),\,l_d(Y,0,N+1)$ are the numbers of forward
focal points in $(0,N+1]$ defined by \eqref{E:n1.def}.
\end{theorem}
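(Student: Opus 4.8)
The plan is to prove Theorem~\ref{compar} by a local-to-global telescoping argument, reducing the global identity \eqref{var41} to a one-step identity that is precisely the definition \eqref{relnumb} of the relative oscillation number. First I would establish the key one-step formula
\begin{equation*}
  m_d(\hY_k)-m_d(Y_k)+\mu(Y_k,\hY_k)-\mu(Y_{k+1},\hY_{k+1})=\#_k(\hZ,Z),
\end{equation*}
valid for each fixed $k\in[0,N]_\sZ$. Once this is in hand, summing over $k=0,\dots,N$ makes the sums $\sum m_d(\hY_k)$ and $\sum m_d(Y_k)$ collapse into $l_d(\hY,0,N+1)$ and $l_d(Y,0,N+1)$ via \eqref{E:n1.def}, the comparative-index terms telescope into $\mu(Y_k,\hY_k)|_0^{N+1}$ (up to sign), and the right-hand side becomes $\#(\hZ,Z,0,N)$ by \eqref{globrelnu}. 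This yields \eqref{var41} after rearranging signs.

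The heart of the matter is therefore the one-step identity, and I would derive it from the comparative-index representation \eqref{forwind} of the multiplicities of forward focal points together with the known algebraic properties of $\mu$. Using $m_d(Y_k)=\mu(Y_{k+1},\cS_k\bnn)$ and $m_d(\hY_k)=\mu(\hY_{k+1},\hS_k\bnn)$, the left-hand side becomes a combination of four comparative indices built from $Y_{k+1},\hY_{k+1},\cS_k\bnn,\hS_k\bnn$. The strategy is to invoke the comparative-index identities for triples of conjoined bases (the ``cocycle''-type relations and the duality relations collected in \cite[Chapter~3]{bookDEH}), rewriting everything in terms of the transformed bases $\hZ_k^{-1}Z_k\bnn$, $\hZ_{k+1}^{-1}Z_{k+1}\bnn$ and the augmented objects $\langle\cS_k\rangle,\langle\hS_k\rangle$ introduced in \eqref{langl}. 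The crucial structural input is \cite[Lemma~2.3]{E-DE-2010}, which guarantees that $\langle\cS\rangle,\langle\hS\rangle$ are genuine $4n\times2n$ conjoined bases and identifies $\mu(\langle\hS_k\rangle,\langle\cS_k\rangle)$ with the ``defect'' between the one-step propagation of $Y$ and of $\hY$; this is exactly what converts the raw combination of four indices into the compact form \eqref{relnumb}.

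I expect the main obstacle to be the bookkeeping in this one-step reduction: one must carefully track which base plays the role of the ``reference'' in each comparative index, apply the correct triangle-type identity $\mu(Y,\hY)+\mu(\hY,\bar Y)=\mu(Y,\bar Y)+\mu(\ldots)$ with its error term, and match the augmented $\langle\cdot\rangle$-formalism to the plain conjoined bases through the transformation $\hZ_k^{-1}Z_k$. The two equivalent expressions in \eqref{relnumb} correspond to whether one telescopes ``from the left'' or ``from the right,'' so showing their equality (and hence the symmetry of $\#_k$) is a byproduct of applying the dual comparative-index identities; this is the technically delicate point but follows the blueprint already set in \cite[Theorem~2.1]{E-DE-2010}, so I would refer to that proof for the detailed manipulations rather than reproduce them. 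No monotonicity or Legendre-type hypothesis is needed here — Theorem~\ref{compar} is a purely algebraic comparison identity valid for any conjoined bases of any two symplectic systems, the assumptions \eqref{Smon} entering only later when this identity is combined with the spectral results.
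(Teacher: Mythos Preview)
The paper does not prove Theorem~\ref{compar}; it simply quotes it as \cite[Theorem~2.1]{E-DE-2010}. Your outline --- establish a one-step identity linking $m_d(\hY_k)-m_d(Y_k)$, the increment $\Delta\mu(Y_k,\hY_k)$, and $\#_k(\hZ,Z)$ via the comparative-index algebra and the augmented bases $\langle\cS_k\rangle,\langle\hS_k\rangle$, then telescope --- is exactly the structure of that cited proof, so your plan is correct and matches the source.

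One bookkeeping correction: the one-step identity you wrote has the sign of the $\mu$-increment reversed. Summing
\[
  m_d(\hY_k)-m_d(Y_k)+\mu(Y_k,\hY_k)-\mu(Y_{k+1},\hY_{k+1})=\#_k(\hZ,Z)
\]
over $k$ gives $l_d(\hY,0,N+1)-l_d(Y,0,N+1)-\mu(Y_k,\hY_k)\big|_0^{N+1}$ on the left, which is \eqref{var41} with the wrong sign on the boundary term. The correct local identity is
\[
  m_d(\hY_k)-m_d(Y_k)+\mu(Y_{k+1},\hY_{k+1})-\mu(Y_k,\hY_k)=\#_k(\hZ,Z),
\]
and with this the telescoping produces \eqref{var41} on the nose, without any residual sign rearrangement.
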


Applying Theorem~\ref{compar} to \eqref{Sla} for the  case $\hS_k:=\cS_k(b),\,\cS_k:=\cS_k(a)$ and
$\hZ_k:=Z_k^{[N+1]}(b),\,Z_k:=Z_k^{[0]}(a)$ we see that formula \eqref{var41}
can be rewritten in the form (see \cite[Corollary 2.4]{E-DE-2010})
\begin{equation}\label{princ}
  l_d(Y^{[0]}(b),0,N+1)-l_d(Y^{[0]}(a),0,N+1)=\#(Z^{[N+1]}(b),Z^{[0]}(a),0,N).
\end{equation}
Substituting \eqref{princ} into the left-hand side of
\eqref{eig.in.int2} we derive
 \begin{equation}\label{rel1}
    \#(Z^{[N+1]}(b),Z^{[0]}(a),0,N)+\sum\limits_{a<\mu\le b}\sum\limits_{k=0}^{N}\vartheta_{k}(\mu)=\#\{\nu\in\sigma |a <\nu\le b\},
  \end{equation}
  where the relative oscillation numbers $\#(Z^{[N+1]}(b),Z^{[0]}(a),0,N)$ are given by \eqref{relnumb} and \eqref{globrelnu}.

Main questions which we answer in  this section are connected with simplifications of the sum in the left-hand side of \eqref{rel1} which leads to the proof of formula \eqref{renormeiga2} and the central result of this section, Theorem~\ref{globoscbig}.
In particular, we investigate sufficient conditions  for the \textit{majorant} condition (see \cite[Section 2]{D-E-JDEA})
 \begin{equation}\label{maj}
   \mu(\langle
\cS_{k}(b) \rangle,\langle   \cS_{k}(a) \rangle)=0
 \end{equation}
 which makes the relative oscillation numbers nonnegative
\begin{equation}\label{relnon}
\#(Z^{[N+1]}(b),Z^{[0]}(a),0,N)\ge 0.
\end{equation}

We begin with the following interesting
oscillation result which is  derived as a corollary to
Theorem~\ref{contsep}.
\begin{lemma}\label{newosc1}
For a symplectic matrix
$W(\la)\in C_p^1$ assume that
\begin{equation}\label{psimon}
  \Psi(W(\la))\ge 0,\,\la\in\bR.
\end{equation}
Then for any $\la\in\bR$ and for  arbitrary   constant
matrix $\hat W\in \Sp$
 there exists the limit $\rank(\hat W-W(\la^-)).$ For the nonnegative numbers
\begin{equation}\label{completrank}
  \rho_{\hat W}(W(\la)):=\rank(\hat W-W(\la^-))-\rank(\hat W-W(\la))
\end{equation}
and
\begin{equation}\label{rB}
  \vartheta(W(\la)):=\rank B(\la^-)-\rank B(\la),\quad W(\la)=\smat{A(\la)}{B(\la)}{C(\la)}{D(\la)}
\end{equation}
we have the connection
\begin{equation}\label{conbig}
  \sum\limits_{a<\nu\le b}\rho_{\hat W}(W(\nu))-\sum\limits_{a<\nu\le b}\vartheta(W(\nu))=\mu(\langle W(\la)\rangle,\langle \hat W\rangle)|_b^a,
\end{equation}
where the matrix $\langle W(\la)\rangle$ is defined by
\eqref{langl}.
\end{lemma}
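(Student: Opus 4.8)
The plan is to derive \eqref{conbig} as an application of the continuous separation theorem, Theorem~\ref{contsep}, to a suitably constructed pair of symplectic fundamental matrices whose Hamiltonians $\Psi(\cdot)$ are both nonnegative. First I would establish the existence of the one-sided limits: since $W(\la)\in C_p^1$ satisfies \eqref{psimon}, the matrix $W(\la)$ is the fundamental matrix of a differential Hamiltonian system \eqref{Ham1} with Hamiltonian $\Psi(W(\la))\ge0$, so by the discussion following \eqref{B} (isolation of proper focal points under the Legendre condition) the ranks of the blocks of $W(\la)$ — and, after the transformation below, also $\rank(\hat W - W(\la))$ — are piecewise constant in $\la$; this gives the left limits in \eqref{completrank} and \eqref{rB}, and shows the sums in \eqref{conbig} are finite over any compact $[a,b]$.

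The key step is the choice of symplectic matrices. I would set $\hZ(\la):=\langle W(\la)\rangle$-compatible data, namely take a symplectic $2n\times2n$ fundamental matrix built so that its first $2n$-by-$n$ conjoined basis reproduces $\langle \hat W\rangle$ and $\langle W(\la)\rangle$ respectively — more precisely, following the device in the proof of Corollary~\ref{kratzcor}, I would find symplectic $\cZ(\la),\hat{\cZ}(\la)\in C_p^1$ (of dimension $4n$) with $\hat{\cZ}(\la)\bigl(\begin{smallmatrix}I\\0\end{smallmatrix}\bigr)=\langle \hat W\rangle$ constant and $\cZ(\la)\bigl(\begin{smallmatrix}I\\0\end{smallmatrix}\bigr)=\langle W(\la)\rangle$, arranged so that $\hat{\cZ}^{-1}(\la)\cZ(\la)$ is independent of $\la$. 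The natural candidate uses the relative-oscillation construction: the Wronskian-type matrix $\langle \hat W\rangle^T\cJ\langle W(\la)\rangle$ is, up to sign, $\hat W - W(\la)$ (this is exactly the identity underlying Definition~\ref{relnum} and \cite[Lemma~2.3]{E-DE-2010}), so the comparative index $\mu(\langle W(\la)\rangle,\langle\hat W\rangle)$ counts $\rank$ of $(\hat W - W(\la))$-type matrices together with index terms, and its $\la$-variation is governed by Theorem~\ref{contsep} once the monotonicity $\Psi\ge0$ is verified. Verifying \eqref{majham} in this $4n$-dimensional setting — i.e. that both $\phi(\hat{\cZ}(\la))\ge0$ and $\Psi(\hat{\cZ}^{-1}\cZ)\ge0$ hold — is where assumption \eqref{psimon} enters, via Proposition~\ref{psiZmod}(iv) applied to the augmented systems.

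Then I would invoke Theorem~\ref{contsep}: with $\hat Y(\la):=\langle\hat W\rangle$ (constant in $\la$, so $l_c(\hat Y,a,b)=0$) and $Y(\la):=\langle W(\la)\rangle$, the left side of \eqref{elyscond51sep} becomes $-l_c(\langle W\rangle,a,b)$, and the right side is $\mu(Y(\la),\hat Y(\la))|_b^a=\mu(\langle W(\la)\rangle,\langle\hat W\rangle)|_b^a$. It remains to identify $l_c(\langle W\rangle,a,b)$ — the number of proper focal points of the conjoined basis $\langle W(\la)\rangle$ of the associated $4n$-dimensional Hamiltonian system — with $\sum_{a<\nu\le b}\bigl(\rho_{\hat W}(W(\nu))-\vartheta(W(\nu))\bigr)$. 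This drop in rank of the upper block of $\langle W(\la)\rangle$ splits, by the block structure $\mathcal X=\bigl(\begin{smallmatrix}I&0\\\cA&\cB\end{smallmatrix}\bigr)$ in \eqref{langl}, into the contribution $\rho_{\hat W}$ coming from the $\hat W - W(\la)$ comparison and the contribution $-\vartheta$ coming from the rank drop of the block $\cB(\la)=B(\la)$ itself; carrying out this bookkeeping is the main obstacle, and I expect to handle it with the block rank/index identities for the comparative index from \cite[Lemma~2.3]{E-DE-2010} (or \cite[Section~3.3]{bookDEH}) together with Remark~\ref{constInitConj}(ii). Combining these identifications yields \eqref{conbig} and completes the proof.
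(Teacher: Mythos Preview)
Your plan contains a genuine gap at the identification step. You set $Y(\la)=\langle W(\la)\rangle$ and $\hat Y(\la)=\langle \hat W\rangle$ and then claim that $l_c(\langle W\rangle,a,b)$ equals $\sum_{a<\nu\le b}\bigl(\rho_{\hat W}(W(\nu))-\vartheta(W(\nu))\bigr)$. But the upper block of $\langle W(\la)\rangle$ is $\mathcal X(\la)=\bigl(\begin{smallmatrix}I&0\\A(\la)&B(\la)\end{smallmatrix}\bigr)$, whose rank is $n+\rank B(\la)$; hence $m_c(\langle W(\la_0)\rangle)=\rank B(\la_0^-)-\rank B(\la_0)=\vartheta(W(\la_0))$ and $l_c(\langle W\rangle,a,b)=\sum_{a<\nu\le b}\vartheta(W(\nu))$. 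The matrix $\hat W$ does not appear anywhere in $\mathcal X(\la)$, so the quantity $\rho_{\hat W}$ cannot arise from the focal-point count of this conjoined basis. With your choices the separation identity would read $-\sum\vartheta(W(\nu))=\mu(\langle W(\la)\rangle,\langle \hat W\rangle)\big|_b^a$, which is not \eqref{conbig}.

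The paper's proof avoids this by taking \emph{both} conjoined bases to be non-constant in $\la$. It introduces an explicit symplectic orthogonal $R\in\mathrm{Sp}(4n)$ and sets $\cZ(\la)=R^{-1}\{I,W(\la)\}$ and $\hat{\cZ}(\la)=R^{-1}\{I,W(\la)\hat W^{-1}\}R$, so that $\hat{\cZ}^{-1}(\la)\cZ(\la)=R^{-1}\{I,\hat W\}$ is constant and Theorem~\ref{contsep} applies. The key point is that the upper block of $\hat{\cZ}(\la)\bnn$ is (up to an invertible factor) $I-W(\la)\hat W^{-1}$, giving $l_c(\hat{\cZ}\bnn,a,b)=\sum\rho_{\hat W}(W(\nu))$, while the upper block of $\cZ(\la)\bnn$ has rank $n+\rank B(\la)$, giving $l_c(\cZ\bnn,a,b)=\sum\vartheta(W(\nu))$. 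Both counts are needed on the left-hand side of \eqref{elyscond51sep}, and the comparative index on the right is then reduced to $\mu(\langle W(\la)\rangle,\langle \hat W\rangle)$ via the duality properties in \cite[Lemma~2.3]{E-DE-2010}. In short, the $\rho_{\hat W}$ term must come from a $\la$-dependent conjoined basis whose upper block encodes $\hat W-W(\la)$; your constant choice $\hat Y=\langle\hat W\rangle$ cannot produce it.
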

The proof of Lemma~\ref{newosc1} is postponed to Appendix~\ref{sec5}.
For the subsequent proofs we need the following properties of the numbers $\rho_{\hat W}(W(\la))$ given by \eqref{completrank}.
\begin{proposition}\label{newosc}
Under the assumptions and the notation of Lemma~\ref{newosc1} the following properties of numbers \eqref{completrank} hold.
\par(i) Let $R,\,P$ be arbitrary symplectic matrices which do not depend on $\la,$ then
\begin{equation}\label{prop1}
  \rho_{R^{-1}\hat W P}(R^{-1}W(\la)P)=\rho_{\hat W}(W(\la)),\quad \la\in\bR.
\end{equation}
In particular, for  the coefficient matrices $\cS_k(\la),\,\tS_k(\la)$   of systems \eqref{Sla} and \eqref{tSla} we have for any fixed $\beta\in\bR$
\begin{equation}\label{prop2}
  \rho_{\tS_k(\beta)}(\tS_k(\la))=\rho_{\cS_k(\beta)}(\cS_k(\la)),\quad \la\in\bR.
\end{equation}
\par (ii) For any $a<b,\,a,b\in\bR$ there exist the limits
$\rank(W(a)-W(\la^-)),$ $\rank(W(b)-W(\la^-))$ and for the numbers
\begin{equation}\label{completrankWa}
  \rho_{W(a)}(W(\la)):=\rank(W(a)-W(\nu))|_{\la}^{\la^{-}},\;\;\rho_{W(b)}(W(\la)):=\rank(W(b)-W(\nu))|_{\la}^{\la^{-}}
\end{equation}
and $\vartheta(W(\la))$ defined by \eqref{rB} we have
\begin{equation}\label{conbigWa}
  \sum\limits_{a<\nu\le b}\rho_{W(a)}(W(\nu))-\sum\limits_{a<\nu\le b}\vartheta(W(\nu))=-\mu(\langle W(b)\rangle,\langle W(a)\rangle),
\end{equation}

\begin{equation}\label{conbigWb}
  \sum\limits_{a<\nu\le b}\rho_{W(b)}(W(\nu))-\sum\limits_{a<\nu\le b}\vartheta(W(\nu))=\mu(\langle W(a)\rangle,\langle W(b)\rangle).
\end{equation}
\par(iii) For the numbers \eqref{completrankWa} we have the connection
\begin{equation*}\label{conWbWa}
    \sum\limits_{a<\nu\le b}\rho_{W(b)}(W(\nu))-\sum\limits_{a<\nu\le b}\rho_{W(a)}(W(\nu))=\rank(W(b)-W(a)).
\end{equation*}
\end{proposition}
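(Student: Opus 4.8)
The plan is to prove the three parts in order, using Lemma~\ref{newosc1} as the main engine together with the congruence-invariance of rank and the elementary algebra of the comparative index (\cite[Lemma~2.3]{E-DE-2010}, \cite[Chapter~3]{bookDEH}); part~(iii) will then fall out by subtracting the two identities of part~(ii).

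For part~(i), I would observe that for $\la$-independent symplectic $R,P$ one has $R^{-1}\hat W P-R^{-1}W(\la)P=R^{-1}(\hat W-W(\la))P$, so multiplication by the fixed invertible matrices $R^{-1}(\cdot)P$ leaves the rank unchanged and commutes with passing to the left limit; hence $\rank(R^{-1}\hat WP-R^{-1}W(\la)P)=\rank(\hat W-W(\la))$, and likewise with $\la$ replaced by $\la^{-}$, which gives \eqref{prop1} after subtraction. The left-hand $\rho$ is well defined because $R^{-1}W(\la)P$ is symplectic with $\Psi(R^{-1}W(\la)P)\ge0$ by Proposition~\ref{psiZmod}(iv), so Lemma~\ref{newosc1} applies to it. Then \eqref{prop2} is the special case $R:=R_{k+1}$, $P:=R_k$, $\hat W:=\cS_k(\beta)$, since $\tS_k(\la)=R_{k+1}^{-1}\cS_k(\la)R_k$ and hence $\tS_k(\beta)=R_{k+1}^{-1}\cS_k(\beta)R_k$.

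For part~(ii), the point is that $W(a)$ and $W(b)$ are \emph{constant} symplectic matrices, so Lemma~\ref{newosc1} applies with $\hat W:=W(a)$ and with $\hat W:=W(b)$; this yields at once the existence of the left limits $\rank(W(a)-W(\la^{-}))$, $\rank(W(b)-W(\la^{-}))$ and the two identities $\sum_{a<\nu\le b}\rho_{W(a)}(W(\nu))-\sum_{a<\nu\le b}\vartheta(W(\nu))=\mu(\langle W(\la)\rangle,\langle W(a)\rangle)|_b^a$ and the analogous one with $a$ replaced by $b$. To reach \eqref{conbigWa}, \eqref{conbigWb} I would evaluate the substitutions via $A(\la)|_b^a=A(a)-A(b)$, together with $\mu(\langle W(a)\rangle,\langle W(a)\rangle)=\mu(\langle W(b)\rangle,\langle W(b)\rangle)=0$ (the comparative index of a conjoined basis with itself vanishes: then $\cM=0$ and the Wronskian is $0$, so $\cD=0$); this turns the two right-hand sides into $-\mu(\langle W(b)\rangle,\langle W(a)\rangle)$ and $\mu(\langle W(a)\rangle,\langle W(b)\rangle)$, which are exactly \eqref{conbigWa} and \eqref{conbigWb}.

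For part~(iii), subtracting \eqref{conbigWa} from \eqref{conbigWb} makes the $\vartheta$-sums cancel and leaves $\sum_{a<\nu\le b}\rho_{W(b)}(W(\nu))-\sum_{a<\nu\le b}\rho_{W(a)}(W(\nu))=\mu(\langle W(a)\rangle,\langle W(b)\rangle)+\mu(\langle W(b)\rangle,\langle W(a)\rangle)$. The remaining, and genuinely nontrivial, step is to identify this sum of comparative indices with $\rank(W(b)-W(a))$: I would invoke the symmetry property $\mu(Y,\hat Y)+\mu(\hat Y,Y)=\rank w(Y,\hat Y)$ of the comparative index (valid for any pair of conjoined bases; see \cite[Chapter~3]{bookDEH}) applied to $Y:=\langle W(a)\rangle$, $\hat Y:=\langle W(b)\rangle$, together with the direct computation $w(\langle\cS\rangle,\langle\hS\rangle)=\cS^{T}\cJ\hS-\cJ=\cJ(\cS^{-1}\hS-I)$, whence $\rank w(\langle W(a)\rangle,\langle W(b)\rangle)=\rank(W(b)-W(a))$; this closes the proof. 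I expect this last identification to be the main obstacle, since it requires both the additive symmetry identity for $\mu$ and the Wronskian formula $w(\langle\cS\rangle,\langle\hS\rangle)=\cS^{T}\cJ\hS-\cJ$ in precisely the right form; the sign bookkeeping in $A(\la)|_b^a$ and the vanishing of the diagonal comparative indices are the other delicate points, but they are routine.
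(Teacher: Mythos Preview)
Your proposal is correct and follows essentially the same route as the paper's own proof: part~(i) via rank-invariance under multiplication by invertible matrices, part~(ii) by specializing Lemma~\ref{newosc1} to $\hat W=W(a)$ and $\hat W=W(b)$ and using $\mu(\langle W(\la)\rangle,\langle W(\la)\rangle)=0$, and part~(iii) by subtraction together with the identity $\mu(Y,\hat Y)+\mu(\hat Y,Y)=\rank w(Y,\hat Y)$ (the paper cites \cite[p.~448]{E-DE-2009}) and the fact that $\rank w(\langle\cS\rangle,\langle\hS\rangle)=\rank(\hS-\cS)$ (the paper cites \cite[Lemma~2.3(i)]{E-DE-2010}). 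Your explicit verification $w(\langle\cS\rangle,\langle\hS\rangle)=\cS^{T}\cJ\hS-\cJ=\cJ(\cS^{-1}\hS-I)$ is a nice self-contained substitute for the latter citation.
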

\begin{proof}
The proof of (i) follows from the definition of numbers \eqref{completrank}. Indeed, using the nonsingularity of the symplectic matrices $R,\,P$ we have
\begin{align*}
  \rho_{R^{-1}\hat WP}(R^{-1}W(\la)P)&:=\rank(R^{-1}(\hat W-W(\la^-))P)-\rank(R^{-1}(\hat W-W(\la))P)\\&=\rank(\hat W-W(\la^-))-\rank(\hat W-W(\la)):=\rho_{\hat W}(W(\la)).
\end{align*}
In particular, putting $R:=R_{k+1},\,P:=R_k,\,W(\la):=\tS_k(\la),\,\hat W:=\tS_k(\beta)$ we derive \eqref{prop2}.
\par(ii) For the proof of
 \eqref{conbigWa} or \eqref{conbigWb} we put in \eqref{conbig} $\hat W:=W(a)$
or $\hat W:=W(b)$ and use that $\mu(\langle W(\la)\rangle,\langle
W(\la)\rangle=0$ for $\la=a$ or $\la=b.$
\par(iii) Subtracting
\eqref{conbigWa} from \eqref{conbigWb} we derive
\begin{equation*}
    \sum\limits_{a<\nu\le b}\rho_{W(b)}(W(\nu))-\sum\limits_{a<\nu\le b}\rho_{W(a)}(W(\nu))=\mu(\langle W(a)\rangle,\langle W(b)\rangle)+\mu(\langle W(b)\rangle,\langle W(a)\rangle).
\end{equation*}
Then we apply  the property of the comparative index $\mu(Y,\hY)+\mu(\hY,Y)=\rank w(Y,\hY)$ (see \cite[p.448]{E-DE-2009}), where the rank of the Wronskian of $\langle \hS\rangle,\,\langle \cS\rangle$ is equal to  $\rank (\hS-\cS)$ according to \cite[Lemma 2.3 (i)]{E-DE-2010}. The prove of property (iii) is completed.
\end{proof}

Introduce the following notation
\begin{align}\label{Ld}
  L_d(\hZ^{-1}Y,0,N+1):&=\sum\limits_{k=0}^N \mu(\langle  \hZ_{k+1}^{-1}Z_{k+1}
\rangle,\langle  \hZ_{k}^{-1} Z_{k}\rangle),\\\label{Ld*}
L_d^*(Z^{-1}\hY,0,N+1):&=\sum\limits_{k=0}^N \mu(\langle  \hZ_{k}^{-1}Z_{k}
\rangle,\langle  \hZ_{k+1}^{-1} Z_{k+1}\rangle)
\end{align}
for the sums of the terms  $\mu(\langle  \hZ_{k+1}^{-1}Z_{k+1}
\rangle,\langle  \hZ_{k}^{-1} Z_{k}\rangle)$ and $\mu(\langle  \hZ_{k}^{-1}Z_{k}
\rangle,\langle  \hZ_{k+1}^{-1} Z_{k+1}\rangle)$  in \eqref{relnumb}. By analogy with the proof of Proposition~\ref{newosc}(iii) we derive the connection
\begin{equation}\label{Ld+Ld*}
  L_d(\hZ^{-1}Y,0,N+1)+L_d^*(Z^{-1}\hY,0,N+1)=\sum\limits_{k=0}^N\rank(\hS_k-\cS_k),
\end{equation}
where we have used that
\begin{equation*}\begin{aligned}
  \mu(\langle  \hZ_{k}^{-1}Z_{k}
\rangle,\langle  \hZ_{k+1}^{-1} Z_{k+1}\rangle)&+\mu(\langle  \hZ_{k+1}^{-1}Z_{k+1}
\rangle,\langle  \hZ_{k}^{-1} Z_{k}\rangle)\\&=\rank (\Delta (\hZ_{k}^{-1}Z_{k}))=\rank(\hS_k-\cS_k).\end{aligned}
\end{equation*}

The main result of this section is the following theorem which generalizes \cite[Theorem 6.4]{bookDEH} to the case when \eqref{rBconst} does not hold.
\begin{theorem}\label{globoscbig}
Assume  \eqref{dnlaosc:E:Intro.4}
and \eqref{Smon}, then for any $a\in \bR$ there exists the limit $\rank(\cS_k(a)-\cS_k(\la^-))$ and for the numbers $\rho_{\cS_k(a)}(\cS_k(\la))$ defined by
\begin{equation}\label{SaS}
  \rho_k(\la):=\rho_{\cS_{k}(a)}(\cS_k(\la))=\rank(\cS_k(a)-\cS_k(\la^-))-\rank(\cS_k(a)-\cS_k(\la))
\end{equation}
 we have
\begin{equation}\label{renormeigbiga*}
\begin{aligned}
    L_d^*((Z^{[0]}(a))^{-1}Y^{[N+1]}(b),0,N+1)+\sum\limits_{a<\nu\le b}\sum\limits_{k=0}^{N} \rho_k(\nu)=\#\{\nu\in\sigma|\,a<\nu\le b\},
\end{aligned}  \end{equation}
where
\begin{equation}
  \label{L=L*}
 L_d^*((Z^{[0]}(a))^{-1}Y^{[N+1]}(b),0,N+1)=L_d((Z^{[N+1]}(a))^{-1}Y^{[0]}(b),0,N+1)
\end{equation}
and $L_d(\cdot),\,L_d^*(\cdot)$ are given by \eqref{Ld}, \eqref{Ld*}.

Moreover, if \eqref{finrank} holds, then the sum $\sum\limits_{a<\nu\le b}\sum\limits_{k=0}^{N} \rho_k(\nu)$ is finite as $a\rightarrow -\infty$ and for $a<\la_1<\min\sigma$ formula \eqref{renormeigbiga*} presents the number $\#\{\nu\in\sigma|\,\nu\le b\}$ of finite eigenvalues of \eqref{Sla},\eqref{E0} less than or equal to $b.$
\end{theorem}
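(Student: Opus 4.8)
The plan is to derive \eqref{renormeigbiga*} from the relative oscillation formula \eqref{rel1} by two substitutions, each of which produces the sum $\sum_{k=0}^{N}\mu(\langle\cS_k(b)\rangle,\langle\cS_k(a)\rangle)$ with opposite sign, so that this sum cancels. First, the existence of $\lim_{\la\to\la_0^-}\rank(\cS_k(a)-\cS_k(\la))$, and hence of the numbers $\rho_k(\la)$ in \eqref{SaS}, follows at once from the first assertion of Lemma~\ref{newosc1} applied to $W(\la):=\cS_k(\la)$ (which obeys $\Psi(\cS_k(\la))\ge0$ by \eqref{Smon}) with the constant symplectic matrix $\hat W:=\cS_k(a)$.

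Next I would expand the relative oscillation numbers by the first line of \eqref{relnumb} with $\hS_k:=\cS_k(b)$, $\cS_k:=\cS_k(a)$, $\hZ_k:=Z_k^{[N+1]}(b)$, $Z_k:=Z_k^{[0]}(a)$; summing over $k$ and using notation \eqref{Ld*} gives
\begin{equation*}
\#(Z^{[N+1]}(b),Z^{[0]}(a),0,N)=L_d^*((Z^{[0]}(a))^{-1}Y^{[N+1]}(b),0,N+1)-\sum_{k=0}^{N}\mu(\langle\cS_k(b)\rangle,\langle\cS_k(a)\rangle).
\end{equation*}
On the other hand, applying formula \eqref{conbigWa} of Proposition~\ref{newosc}(ii) to $W:=\cS_k$ — where now $\vartheta(\cS_k(\nu))=\vartheta_k(\nu)$ and $\rho_{\cS_k(a)}(\cS_k(\nu))=\rho_k(\nu)$ by \eqref{SaS} — yields $\sum_{a<\nu\le b}\vartheta_k(\nu)=\sum_{a<\nu\le b}\rho_k(\nu)+\mu(\langle\cS_k(b)\rangle,\langle\cS_k(a)\rangle)$, and summing over $k$,
\begin{equation*}
\sum_{a<\nu\le b}\sum_{k=0}^{N}\vartheta_k(\nu)=\sum_{a<\nu\le b}\sum_{k=0}^{N}\rho_k(\nu)+\sum_{k=0}^{N}\mu(\langle\cS_k(b)\rangle,\langle\cS_k(a)\rangle).
\end{equation*}
Substituting the two displays into \eqref{rel1}, the terms $\sum_{k}\mu(\langle\cS_k(b)\rangle,\langle\cS_k(a)\rangle)$ cancel and one obtains $L_d^*((Z^{[0]}(a))^{-1}Y^{[N+1]}(b),0,N+1)+\sum_{a<\nu\le b}\sum_{k=0}^{N}\rho_k(\nu)=\#\{\nu\in\sigma\,|\,a<\nu\le b\}$, which is \eqref{renormeigbiga*}.

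There remains the identity \eqref{L=L*}. Here I would exploit the Wronskian identity $w(Y^{[0]}(\la),Y^{[N+1]}(\la))=Y^{[0]\,T}(\la)\cJ Y^{[N+1]}(\la)=\const$, which yields $Z_k^{[N+1]}(\la)=Z_k^{[0]}(\la)\,(Z_{N+1}^{[0]}(\la))^{-1}$; consequently the matrices $(Z_k^{[N+1]}(b))^{-1}Z_k^{[0]}(a)$ and $(Z_{k}^{[N+1]}(a))^{-1}Z_{k}^{[0]}(b)$ differ only by inversion and by multiplication by the constant symplectic factors $Z_{N+1}^{[0]}(a)$ and $(Z_{N+1}^{[0]}(b))^{-1}$. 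Combining this with the behaviour of the augmented comparative index $\mu(\langle\cS\rangle,\langle\hat\cS\rangle)$ under $\cS\mapsto R^{-1}\cS P$ and under $\cS\mapsto\cS^{-1}$ (\cite[Lemma~2.3]{E-DE-2010}), together with relation \eqref{0-N+1} and identity \eqref{Ld+Ld*}, should give \eqref{L=L*}. This step — keeping track of how the $4n\times2n$ augmented bases $\langle\cdot\rangle$ transform under inversion and under constant symplectic multiplication, and verifying that the two telescoping sums agree term by term after the reversal $k\mapsto N+1-k$ — is the part I expect to be the main obstacle, since $\langle\cdot\rangle$ does not transform as simply as an ordinary conjoined basis.

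Finally, for the last claim, assume \eqref{finrank}. By Theorem~\ref{globosc} the spectrum is then bounded from below, so for fixed $b$ the quantity $\#\{\nu\in\sigma\,|\,a<\nu\le b\}$ stays bounded as $a\to-\infty$; since each summand of $L_d^*$ is the comparative index of $2n$-column matrices, $0\le L_d^*((Z^{[0]}(a))^{-1}Y^{[N+1]}(b),0,N+1)\le 2n(N+1)$, whence $\sum_{a<\nu\le b}\sum_{k=0}^{N}\rho_k(\nu)$ remains bounded, hence finite, as $a\to-\infty$. Taking any $a<\la_1=\min\sigma$, so that $(a,b]$ and $(-\infty,b]$ contain the same finite eigenvalues, \eqref{renormeigbiga*} becomes the asserted formula for $\#\{\nu\in\sigma\,|\,\nu\le b\}$, exactly as in the proof of Theorem~\ref{newmay2019}.
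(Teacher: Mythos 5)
Your derivation of \eqref{renormeigbiga*} is exactly the paper's argument: apply Lemma~\ref{newosc1} (via \eqref{conbigWa}) with $W(\la):=\cS_k(\la)$, $\hat W:=\cS_k(a)$ to get $\sum_{a<\nu\le b}\vartheta_k(\nu)=\sum_{a<\nu\le b}\rho_k(\nu)+\mu(\langle\cS_k(b)\rangle,\langle\cS_k(a)\rangle)$, expand $\#(Z^{[N+1]}(b),Z^{[0]}(a),0,N)$ by the first line of \eqref{relnumb}, substitute both into \eqref{rel1}, and cancel the terms $\mu(\langle\cS_k(b)\rangle,\langle\cS_k(a)\rangle)$. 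Your treatment of the last claim (finiteness of the double sum as $a\to-\infty$ and the count of eigenvalues $\le b$) is also sound; the paper argues instead from \eqref{conoscbig} together with the bound $\mu(\langle\cS_k(b)\rangle,\langle\cS_k(a)\rangle)\le\rank(\cS_k(b)-\cS_k(a))\le 2n$, but your version via \eqref{renormeigbiga*} and $0\le L_d^*\le 2n(N+1)$ is equivalent in substance.

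The genuine gap is \eqref{L=L*}: you only sketch a route (fundamental-matrix identity $Z_k^{[N+1]}(\la)=Z_k^{[0]}(\la)(Z_{N+1}^{[0]}(\la))^{-1}$, then tracking how the augmented bases $\langle\cdot\rangle$ behave under inversion and constant symplectic factors) and you yourself flag it as the main obstacle without carrying it out. As you note, $\langle\cdot\rangle$ does not transform covariantly under $V\mapsto RVP$, and \eqref{Ld+Ld*} relates $L_d$ and $L_d^*$ only for the \emph{same} pair $(\hZ,Z)$, whereas in \eqref{L=L*} the roles of $a$ and $b$ are interchanged between the two sides; so this sketch does not yet constitute a proof. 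The paper's argument is much shorter and avoids all transformation bookkeeping: interchanging $a$ and $b$ in \eqref{princ} gives $l_d(Y^{[0]}(a),0,N+1)-l_d(Y^{[0]}(b),0,N+1)=\#(Z^{[N+1]}(a),Z^{[0]}(b),0,N)$, hence $-\#(Z^{[N+1]}(a),Z^{[0]}(b),0,N)=\#(Z^{[N+1]}(b),Z^{[0]}(a),0,N)$; now expand the left-hand relative oscillation numbers by the second line of \eqref{relnumb} (producing $\sum_k\mu(\langle\cS_k(b)\rangle,\langle\cS_k(a)\rangle)-L_d((Z^{[N+1]}(a))^{-1}Y^{[0]}(b),0,N+1)$) and the right-hand ones by the first line (producing $L_d^*((Z^{[0]}(a))^{-1}Y^{[N+1]}(b),0,N+1)-\sum_k\mu(\langle\cS_k(b)\rangle,\langle\cS_k(a)\rangle)$); the sums $\sum_k\mu(\langle\cS_k(b)\rangle,\langle\cS_k(a)\rangle)$ cancel and \eqref{L=L*} follows at once. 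You should replace your sketch by this cancellation argument (or complete the transformation computation in detail) for the proof to be complete.
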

\begin{proof}
Putting $W(\la):=\cS_{k}(\la)$ in \eqref{conbigWa} and using the notation
$\vartheta(\cS_k(\la))=\vartheta_{k}(\la)$ given by \eqref{jump} we see
that
\begin{equation}\label{conoscbig}
  \sum\limits_{a<\nu\le b}\rho_{\cS_{k}(a)}(\cS_k(\nu))+\mu(\langle \cS_{k}(b)\rangle,\langle \cS_{k}(a)\rangle)=\sum\limits_{a<\nu\le b}\vartheta_{k}(\nu),
\end{equation}
where  $\rho_{\cS_{k}(a)}(\cS_k(\la))$ is given by \eqref{SaS}. Summing \eqref{conoscbig} for $k=0,1,\dots,N$ and then substituting the representation for $\sum\limits_{k=0}^N\sum\limits_{a<\nu\le b}\vartheta_{k}(\nu)=\sum\limits_{a<\nu\le b}\sum\limits_{k=0}^N\vartheta_{k}(\nu)$ into the left-hand side of \eqref{rel1} we cancel the same addends $\mu(\langle \cS_{k}(b)\rangle,\langle \cS_{k}(a)\rangle)$ in the first representation \eqref{relnumb} of the relative oscillation numbers and in \eqref{conoscbig}. Incorporating notation \eqref{Ld*} we derive identity \eqref{renormeigbiga*}.

For the proof of \eqref{L=L*} we replace the roles of $a$ and $b$ in \eqref{princ} and derive
 $$ l_d(Y^{[0]}(a),0,N+1)-l_d(Y^{[0]}(b),0,N+1)=\#(Z^{[N+1]}(a),Z^{[0]}(b),0,N).$$
So we see that $-\#(Z^{[N+1]}(a),Z^{[0]}(b),0,N)=\#(Z^{[N+1]}(b),Z^{[0]}(a),0,N),$ then using for the  relative oscillation numbers in the previous identity the representations associated with  $\mu(\langle \cS_{k}(b)\rangle,\langle \cS_{k}(a)\rangle),\,k=0,1,\dots,N$ (see \eqref{relnumb}) we cancel these terms and derive \eqref{L=L*}.

Using estimate \eqref{prop4} for the comparative index $\mu(\langle \cS_{k}(b)\rangle,\langle \cS_{k}(a)\rangle)\le\rank(\cS_{k}(b)-\cS_{k}(a))\le 2n$ in \eqref{conoscbig} we see that the sum $\sum\limits_{a<\nu\le b}\rho_{\cS_{k}(a)}(\cS_k(\nu))$ is finite if and only if $\sum\limits_{a<\nu\le b}\vartheta_{k}(\nu)$ is finite as $a\rightarrow -\infty.$ Then \eqref{finrank} is sufficient for the finiteness of $\sum\limits_{a<\nu\le b}\sum\limits_{k=0}^N\rho_{\cS_{k}(a)}(\cS_k(\nu)).$ Finally, recall that condition \eqref{finrank} is also sufficient for \eqref{finspectr}. The proof is completed.
\end{proof}
\begin{remark}\label{aboutcon}
\par(i) One can verify directly that \eqref{renormeiga*}, \eqref{renormeiga} in Theorems~\ref{globoscrel1*},~\ref{globoscrel1} are equivalent to  \eqref{renormeigbiga*}, \eqref{L=L*} using the representations for \eqref{Ld*} and \eqref{Ld} in form
\begin{align}\label{Ld*1}
  L_d^*(Z^{-1}\hY,0,N+1)&=l_d^*(Z^{-1}\hY,0,N+1)+\sum\limits_{k=0}^N \mu(\langle  \tS_k
\rangle,\langle  I\rangle),\,\tS_k=Z_{k+1}^{-1}\hS_k Z_{k},\\\label{Ld1} L_d(\hZ^{-1}Y,0,N+1)&=l_d(\hZ^{-1}Y,0,N+1)+\sum\limits_{k=0}^N \mu(\langle  \tS_k
\rangle,\langle  I\rangle),\,\tS_k=\hZ_{k+1}^{-1}\cS_k \hZ_{k}
\end{align}
which is based on \cite[Lemma 2.3(v)]{E-DE-2010} and  \eqref{backwind}, \eqref{forwind} (see also  \cite[Remark 4.46]{bookDEH}). Next one can apply \eqref{conbigWa} putting   $W(\la):=\tS_k(\la)$ to see that \eqref{renormeigbiga*} is equivalent to \eqref{renormeiga*} and that \eqref{renormeigbiga*} with  $L_d^*((Z^{[0]}(a))^{-1}Y^{[N+1]}(b),0,N+1)$ replaced by  $L_d((Z^{[N+1]}(a))^{-1}Y^{[0]}(b),0,N+1)$ is equivalent to \eqref{renormeiga}.

\par(ii) Using the relation $\sum\limits_{a<\nu\le b}(\rho_{S_k(b)}(S_k(\nu))-\rho_{S_k(a)}(S_k(\nu)))=\rank(\cS_k(b)-\cS_k(a))$ according to Proposition~\ref{newosc}(iii) and incorporating connection \eqref{Ld+Ld*} one can derive the equivalent form of \eqref{renormeigb}, \eqref{renormeigb*} in Theorems~\ref{globoscrel1},~\ref{globoscrel1*}
\begin{equation*}\label{renormeigbigb}
    -L_d((Z^{[N+1]}(b))^{-1}Y^{[0]}(a),0,N+1)+\sum\limits_{a<\nu\le b} \sum\limits_{k=0}^{N} \rho_{\cS_k(b)}(\cS_k(\nu))=\#\{\nu\in\sigma|\,a<\nu\le b\},
\end{equation*}
where
\begin{align}\label{L*=L}
 L_d((Z^{[N+1]}(b))^{-1}Y^{[0]}(a),0,N+1)&= L_d^*((Z^{[0]}(b))^{-1}Y^{[N+1]}(a),0,N+1).
\end{align}
\par(iii) The main advantage of Theorem~\ref{globoscbig} is  the invariant form of the sum $\sum\limits_{a<\nu\le b}\sum\limits_{k=0}^{N} \rho_k(\nu)$ which does not depend on the (unknown) transformation matrices $Z_k^{[l]}(\beta),\,l\in\{0,N+1\},\,\beta\in\{a,b\}$ as it takes place in Theorems~\ref{globoscrel1},~\ref{globoscrel1*}. The price of this advantage is the necessity to calculate $L_d^*(Z^{-1}\hY,0,N+1)$ or $L_d(\hZ^{-1}Y,0,N+1)$ instead of $l_d^*(Z^{-1}\hY,0,N+1)$ or $l_d(\hZ^{-1}Y,0,N+1)$ according to connections \eqref{Ld*1}, \eqref{Ld1}. Remark also that $L_d^*(Z^{-1}\hY,0,N+1)$ and $L_d(\hZ^{-1}Y,0,N+1)$ have the same meaning as $l_d^*(Z^{-1}\hY,0,N+1)$ and  $l_d(\hZ^{-1}Y,0,N+1)$, i.e., present the number of backward and forward focal points of conjoined bases of some augmented systems associated with \eqref{tSla} (see  \cite[Remark 4.46]{bookDEH} for more details).
\end{remark}
As a corollary to Theorems~\ref{globoscrel1},~\ref{globoscrel1*},~\ref{globoscbig} consider  the important special case associated with the condition
\begin{equation}\label{nonoscbiG}\begin{array}{c}
  \rho_k(\la):=\rho_{\cS_{k}(a)}(\cS_k(\la))=\rank(\cS_k(a)-\cS_k(\la^-))-\rank(\cS_k(a)-\cS_k(\la))=0,\\\la\in(a,b],\,k\in[0,N]_{\sZ}.
\end{array}\end{equation}

\begin{theorem}\label{relth1}
Assume  \eqref{dnlaosc:E:Intro.4},
 \eqref{Smon}. Then, for any $a<b$  condition \eqref{nonoscbiG} is necessary and sufficient for the representation
 \begin{equation}\label{renormeigconB}
 \begin{aligned}
  L_d^*((Z^{[0]}(a))^{-1}Y^{[N+1]}(b),0,N+1)&=L_d((Z^{[N+1]}(a))^{-1}Y^{[0]}(b),0,N+1)\\&=\#\{\nu\in\sigma |a <\nu\le b\}.
  \end{aligned}
\end{equation}

Similarly, condition \eqref{nonoscbiG} is necessary and sufficient  for the representations of
the sums
\begin{equation}\label{ad10conB}
 \sum\limits_{a<\nu\le b}n_{Z^{[l]}(a)}(\nu)= \sum\limits_{k=0}^{N} \mu(\langle \tS_k(b)\rangle,\langle I \rangle)\le n(N+1),\,l\in\{0,N+1\}
\end{equation}
in \eqref{renormeiga} and \eqref{renormeiga*}, where $\tS_k(b)$ is the coefficient matrix of \eqref{tSla} associated with $R_k:=Z^{[l]}(a),\,l\in\{0,N+1\}.$
\end{theorem}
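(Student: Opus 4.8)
The plan is to obtain both equivalences as direct consequences of Theorem~\ref{globoscbig}, exploiting that the numbers $\rho_k(\nu)$ are nonnegative and that condition \eqref{nonoscbiG} says precisely that their double sum over $\nu\in(a,b]$ and $k\in[0,N]_{\sZ}$ vanishes.

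First I would handle \eqref{renormeigconB}. Combining \eqref{renormeigbiga*} with \eqref{L=L*}, the common value $L_d^*((Z^{[0]}(a))^{-1}Y^{[N+1]}(b),0,N+1)=L_d((Z^{[N+1]}(a))^{-1}Y^{[0]}(b),0,N+1)$ equals $\#\{\nu\in\sigma|\,a<\nu\le b\}-\sum_{a<\nu\le b}\sum_{k=0}^{N}\rho_k(\nu)$. Since every $\rho_k(\nu)\ge0$ by Lemma~\ref{newosc1}, the subtracted double sum is $0$ if and only if $\rho_k(\nu)=0$ for all $\nu\in(a,b]$ and all $k\in[0,N]_{\sZ}$, that is, if and only if \eqref{nonoscbiG} holds. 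Hence \eqref{nonoscbiG} is necessary and sufficient for \eqref{renormeigconB}.

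For \eqref{ad10conB} I would fix $l\in\{0,N+1\}$, set $R_k:=Z_k^{[l]}(a)$, and let $\tS_k(\la)$ denote the coefficient matrix of \eqref{tSla}. Because $Z_k^{[l]}(a)$ solves \eqref{Sla} at $\la=a$, one has $\tS_k(a)=(Z_{k+1}^{[l]}(a))^{-1}\cS_k(a)Z_k^{[l]}(a)=I$. Applying \eqref{conbigWa} of Proposition~\ref{newosc}(ii) with $W(\la):=\tS_k(\la)$, together with the invariance \eqref{prop2} giving $\rho_{\tS_k(a)}(\tS_k(\la))=\rho_{\cS_k(a)}(\cS_k(\la))=\rho_k(\la)$, the identity $\vartheta(\tS_k(\la))=\tilde\vartheta_k(\la)$ from \eqref{jumpt}, and $\tS_k(a)=I$, yields
$$\sum_{a<\nu\le b}\rho_k(\nu)-\sum_{a<\nu\le b}\tilde\vartheta_k(\nu)=-\mu(\langle\tS_k(b)\rangle,\langle I\rangle).$$
Summing over $k=0,\dots,N$, interchanging the (finite) sums by \eqref{newnot1} (cf. Remark~\ref{permutorder}(ii)), and rearranging gives
$$\sum_{a<\nu\le b}n_{Z^{[l]}(a)}(\nu)-\sum_{k=0}^{N}\mu(\langle\tS_k(b)\rangle,\langle I\rangle)=\sum_{a<\nu\le b}\sum_{k=0}^{N}\rho_k(\nu)\ge0 ,$$
so the left-hand side vanishes exactly under \eqref{nonoscbiG}, which is the first equality in \eqref{ad10conB}; via \eqref{Ld1} and \eqref{Ld*1} the cases $l=N+1$ and $l=0$ exhibit these sums as they occur in \eqref{renormeiga} and \eqref{renormeiga*}. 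The estimate $\sum_{k=0}^{N}\mu(\langle\tS_k(b)\rangle,\langle I\rangle)\le n(N+1)$ holds termwise, since by the properties of $\mu(\langle\cdot\rangle,\langle\cdot\rangle)$ in \cite[Lemma~2.3]{E-DE-2010} and the estimate \eqref{prop4} each term does not exceed the rank of the upper block $\mathcal X$ of $\langle I\rangle$ (see \eqref{langl}), which equals $n$.

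I do not expect a genuine obstacle: the statement is essentially a bookkeeping reformulation of Theorem~\ref{globoscbig}. The points requiring care are the identity $\tS_k(a)=I$ for the choice $R_k=Z_k^{[l]}(a)$, the transfer $\rho_{\cS_k(a)}(\cS_k(\la))=\rho_{\tS_k(a)}(\tS_k(\la))$ through \eqref{prop2}, and keeping track of which transformed Wronskian -- $(Z^{[N+1]}(a))^{-1}Y^{[0]}(b)$ versus $(Z^{[0]}(a))^{-1}Y^{[N+1]}(b)$ -- is paired with which index $l\in\{0,N+1\}$.
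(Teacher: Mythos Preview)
Your proof is correct and follows essentially the same route as the paper: both parts are read off from Theorem~\ref{globoscbig} and Proposition~\ref{newosc}(ii) applied with $W(\la):=\tS_k(\la)$, $\tS_k(a)=I$, and the invariance \eqref{prop2}, with the bound coming from \eqref{prop4}. Your write-up even makes the termwise estimate $\mu(\langle\tS_k(b)\rangle,\langle I\rangle)\le n$ slightly more explicit than the paper does.
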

\begin{proof}
By \eqref{renormeigbiga*}, \eqref{L=L*} we see that \eqref{nonoscbiG} is equivalent to \eqref{renormeigconB}, where we use that numbers \eqref{SaS} are nonnegative.

 Putting in \eqref{conbigWa} of Proposition~\ref{newosc}(ii) $W(\la):=\tS_k(\la),$ where $\tS_k(\la)=(Z_{k+1}^{[l]}(a))^{-1}\cS_k(\la)Z_{k}^{[l]}(a),\,l\in\{0,N+1\}$ and incorporating that $\vartheta(\tS_k(\la))=\tilde \vartheta_k(\la)$  for $\tilde \vartheta_k(\la)$ given by \eqref{jumpt} we have
\begin{equation*}
  \sum\limits_{a<\nu\le b}\rho_{\tS_k(a)}(\tS_k(\nu))-\sum\limits_{a<\nu\le b}\tilde \vartheta(\nu)=-\mu(\langle \tS_k(b)\rangle,\langle \tS_k(a)\rangle).
\end{equation*}
 Observe also that $\tS_k(a)=(Z_{k+1}^{[l]}(a))^{-1}\cS_k(a)Z_{k}^{[l]}(a)=I,\,l\in\{0,N+1\}$ then summing the above identity from $k=0$ to $k=N$ and incorporating property \eqref{prop2} we derive for $l\in\{0,N+1\}$
\begin{equation}\label{ad10}
 \sum\limits_{a<\nu\le b}n_{Z^{[l]}(a)}(\nu)= \sum\limits_{a<\nu\le b} \sum\limits_{k=0}^{N} \rho_{S_k(a)}(S_k(\nu))+\sum\limits_{k=0}^{N} \mu(\langle \tS_k(b)\rangle,\langle I \rangle).
\end{equation}
By \eqref{ad10} we see that \eqref{nonoscbiG} is equivalent to \eqref{ad10conB}, where we estimate the comparative index $\mu(\langle \tS_k(b)\rangle,\langle I \rangle)\le n$ using \eqref{prop4}. The proof is completed.
\end{proof}

Next we formulate the  simplest sufficient criteria for \eqref{nonoscbiG}.
\begin{corollary}\label{underconB}
Assume \eqref{dnlaosc:E:Intro.4},
 \eqref{Smon} and
\begin{equation}\label{conB}
  \rank\cB_k(\la^-)=\rank\cB_k(\la),\,\la\in (a,b],\,k\in[0,N]_{\sZ}
\end{equation}
for the block $\cB_k(\la)$  of $\cS_k(\la)$ given by
\eqref{dnlaosc:E:Intro.4}. Then condition \eqref{nonoscbiG} and
the majorant condition \eqref{maj} hold. Moreover, according to
Theorem~\ref{relth1} we also have  \eqref{renormeigconB} and \eqref{ad10conB}.
\end{corollary}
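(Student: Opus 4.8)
The plan is to read off everything from the scalar identity \eqref{conoscbig} used in the proof of Theorem~\ref{globoscbig}, specialized to $W(\la):=\cS_k(\la)$. Concretely, putting $W(\la):=\cS_k(\la)$ in \eqref{conbigWa} of Proposition~\ref{newosc}(ii) (which needs only \eqref{dnlaosc:E:Intro.4} and \eqref{Smon}) and using $\mu(\langle\cS_k(a)\rangle,\langle\cS_k(a)\rangle)=0$ gives, for every $k\in[0,N]_{\sZ}$,
\begin{equation*}
\sum_{a<\nu\le b}\rho_{\cS_k(a)}(\cS_k(\nu))+\mu(\langle\cS_k(b)\rangle,\langle\cS_k(a)\rangle)=\sum_{a<\nu\le b}\vartheta_k(\nu).
\end{equation*}
So the first step is to translate the hypothesis: by the very definition \eqref{jump}, $\vartheta_k(\nu)=\rank\cB_k(\nu^-)-\rank\cB_k(\nu)$, hence \eqref{conB} says exactly that $\vartheta_k(\nu)=0$ for all $\nu\in(a,b]$ and $k\in[0,N]_{\sZ}$; in particular the right-hand side of the displayed identity vanishes for each $k$.

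The second step is a nonnegativity argument. By Lemma~\ref{newosc1} each term $\rho_{\cS_k(a)}(\cS_k(\nu))=\rho_k(\nu)$ is nonnegative, and the comparative index $\mu(\langle\cS_k(b)\rangle,\langle\cS_k(a)\rangle)$ is nonnegative by its definition (cf.\ also the lower bound in \eqref{prop4}). A finite sum of nonnegative quantities that equals zero forces every summand to vanish, so $\rho_k(\nu)=0$ for all $\nu\in(a,b]$ and $k\in[0,N]_{\sZ}$ --- which is precisely \eqref{nonoscbiG} --- and $\mu(\langle\cS_k(b)\rangle,\langle\cS_k(a)\rangle)=0$ for all $k$ --- which is the majorant condition \eqref{maj}. (As a byproduct one gets \eqref{relnon}, since the relative oscillation numbers are then sums of nonnegative terms.)

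The third step is to invoke Theorem~\ref{relth1}: having verified \eqref{nonoscbiG}, and since that theorem states \eqref{nonoscbiG} is necessary and sufficient for both \eqref{renormeigconB} and \eqref{ad10conB}, the two representations follow at once. This finishes the proof.

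There is no serious obstacle here; the only points requiring care are (a) recognizing that identity \eqref{conoscbig} is available under \eqref{Smon} alone, via Proposition~\ref{newosc}(ii), and (b) being sure that \emph{both} summands on its left are genuinely nonnegative, so that a vanishing total kills each of them individually and thereby yields \eqref{nonoscbiG} and \eqref{maj} simultaneously. Everything else is the routine bookkeeping of the equivalence between \eqref{conB} and $\vartheta_k\equiv 0$ on $(a,b]$, together with a direct appeal to Theorem~\ref{relth1}.
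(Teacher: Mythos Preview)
Your proof is correct and follows essentially the same approach as the paper: both use identity \eqref{conoscbig}, observe that \eqref{conB} forces the right-hand side to vanish, and then exploit the nonnegativity of both addends on the left to obtain \eqref{nonoscbiG} and \eqref{maj} simultaneously, before invoking Theorem~\ref{relth1}.
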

\begin{proof}
For the proof we use \eqref{conoscbig}. Applying \eqref{conB} we see that $\sum\limits_{a<\nu\le
b}\vartheta_{k}(\nu)=0.$ Remark that both addends in the left-hand
side of \eqref{conoscbig} are nonnegative, then it follows that
\begin{equation}\label{addd}
    \mu(\langle \cS_{k}(b)\rangle,\langle \cS_{k}(a)\rangle)=0,\quad \rho_{\cS_{k}(a)}(\cS_k(\la))=0,\,k\in [0,N],\;\la\in (a,b].
\end{equation}
Then we have proved \eqref{nonoscbiG} and \eqref{maj}. Moreover, by \eqref{nonoscbiG} we have that \eqref{renormeigconB} and \eqref{ad10conB} hold.
\end{proof}
\begin{remark}\label{omitted ex}
\par(i) Consider  problem \eqref{Sla},\eqref{E0} under \eqref{Smon} and  the additional assumption for the matrix $\cS_k(\la)$
\begin{equation*}\label{constupp}
  \cA_k(\la)\equiv\cA_k,\quad \cB_k(\la)\equiv\cB_k,\quad k\in[0,N]_{\sZ},\;\la\in\bR
\end{equation*}
which covers the special case \eqref{dnlaosc:E:Intro.7A}. We see that condition \eqref{conB} is satisfied for all $a<b,$ then by Corollary~\ref{underconB} all identities in Theorem~\ref{relth1} hold. Moreover, one can verify directly (see also \cite[Subsection 6.1.3]{bookDEH}) that the block $\tB_k(\la)$ of the matrix $\tS_k(\la)$ associated with $R_k=Z_k^{[l]}(a),\,l\in\{0,N+1\}$ is symmetric, $\tB_k(a)=0,$ and by \eqref{Smon} we have $\dot{\tB}_k(\la)\ge 0.$ Finally, applying Corollary~\ref{kratzcor} we derive for the left-hand side of \eqref{ad10conB}
$ \sum\limits_{a<\nu\le b}n_{Z^{[l]}(a)}(\nu)=0,\,l\in\{0,N+1\},$
then, instead of \eqref{renormeigconB} we have by Theorems~\ref{globoscrel1},~\ref{globoscrel1*} (see also \eqref{Ld*1}, \eqref{Ld1})
\begin{equation*}\label{backwforwlow}
  l_d^*((Z^{[0]}(a))^{-1}Y^{[N+1]}(b),0,N+1)=l_d((Z^{[N+1]}(a))^{-1}Y^{[0]}(b),0,N+1)=\#\{\nu\in\sigma|a<\nu\le b\},
\end{equation*}
i.e., the number of finite eigenvalues of \eqref{Sla},\eqref{E0} in $(a,b]$ can be calculated using the number of backward focal points in $[0,N+1)$ of the conjoined basis $(Z^{[0]}(a))^{-1}Y^{[N+1]}(b)$ which equals the number of forward focal points in $(0,N+1]$ of the conjoined basis $(Z^{[N+1]}(a))^{-1}Y^{[0]}(b).$
This result was proved in \cite[Theorem 6.9 and Remark 6.10(i)]{bookDEH}, where we used a different proof.
\par(ii) The most important special case of \eqref{Sla} for which condition \eqref{conB} is satisfied is  discrete matrix Sturm-Liouville eigenvalue problems with nonlinear dependence on $\la\in\bR$. Renormalized and more general relative oscillation theorems for these problems are presented in \cite{E-ADV} (see also \cite[Section 6.1]{bookDEH}).
\par(iii) Observe that under the assumptions of Corollary~\ref{underconB} formula \eqref{renormeigconB} is equivalent to \begin{equation}\label{rel11}
    \#(Z^{[N+1]}(b),Z^{[0]}(a),0,N)=\#\{\nu\in\sigma |a <\nu\le b\},
  \end{equation}
  where for the relative oscillation numbers $\#(Z^{[N+1]}(b),Z^{[0]}(a),0,N)$   given by  \eqref{globrelnu} with $\hZ:=Z^{[N+1]}(b),$
  $Z:=Z^{[0]}(a)$ we have majorant condition \eqref{maj} and then
  \begin{equation*}\label{globrelnumaj}
  \#_k(Z^{[N+1]}(b),Z^{[0]}(a))=\mu(\langle (Z^{[N+1]}_k(b))^{-1}Z^{[0]}_k(a) \rangle,\langle Z^{[N+1]}_{k+1}(b))^{-1}Z^{[0]}_{k+1}(a)\rangle)\ge 0,
\end{equation*}
i.e.,  \eqref{relnon} holds. This result was derived  in \cite[Theorem 6.4 and Remark 6.5(i)]{bookDEH}, where we used a different proof.
\end{remark}
Combining the proof of Corollary~\ref{underconB} and Proposition~\ref{newosc}(i) one can generalize Corollary~\ref{underconB} as follows.
\begin{corollary}\label{underconBgen}
Assume \eqref{dnlaosc:E:Intro.4} and \eqref{Smon} and suppose that there exist
symplectic matrices $R$ and $P$ such that for the matrix
\begin{equation}\label{tS}
   \bar \cS_k(\la)= R^{-1}\cS_{k}(\la)P=\smat{\bar A_{k}(\la)}{\bar B_{k}(\la)}{\bar C_{k}(\la)}{\bar D_{k}(\la)}
\end{equation}
condition \eqref{conB} is satisfied, i.e.,
\begin{equation}\label{contB}
  \rank\bar B_k(\la^-)=\rank\bar B_k(\la),\,\la\in (a,b],\,k\in[0,N]_{\sZ}.
\end{equation}
Then we have \eqref{nonoscbiG}
and  according to
Theorem~\ref{relth1} identities \eqref{renormeigconB} and \eqref{ad10conB} hold.
\end{corollary}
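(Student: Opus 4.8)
The plan is to reduce the claim to Corollary~\ref{underconB} applied to the constant symplectic transform $\bar\cS_k(\la)=R^{-1}\cS_k(\la)P$ from \eqref{tS}, and then to transport the resulting vanishing of the relative numbers back to $\cS_k(\la)$ by means of the invariance property \eqref{prop1}.

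First I would check that $\bar\cS_k(\la)$ meets all the hypotheses under which Corollary~\ref{underconB} was proved, with \eqref{contB} taking the role of \eqref{conB}. Since $R,P\in\Sp$ do not depend on $\la$, the matrix $\bar\cS_k(\la)$ is symplectic and belongs to $C_p^1$ in $\la$; moreover, by Proposition~\ref{psiZmod}(iv) (the equivalence of \eqref{Smon} and \eqref{SmonR}) the monotonicity assumption $\Psi(\bar\cS_k(\la))\ge0$ holds for all $k\in[0,N]_{\sZ}$ and $\la\in\bR$. Hence Corollary~\ref{underconB}, read with $\bar\cS_k(\la)$ and its block $\bar B_k(\la)$ in place of $\cS_k(\la)$ and $\cB_k(\la)$, yields the analogue of \eqref{nonoscbiG} for $\bar\cS_k(\la)$, namely
\[
\rho_{\bar\cS_k(a)}(\bar\cS_k(\la))=\rank(\bar\cS_k(a)-\bar\cS_k(\la^-))-\rank(\bar\cS_k(a)-\bar\cS_k(\la))=0,\qquad \la\in(a,b],\quad k\in[0,N]_{\sZ}
\]
(the one-sided limit $\rank(\bar\cS_k(a)-\bar\cS_k(\la^-))$ exists by Lemma~\ref{newosc1}).

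Next I would apply \eqref{prop1} of Proposition~\ref{newosc}(i) with $\hat W:=\cS_k(a)$, $W(\la):=\cS_k(\la)$, and the symplectic matrices $R,P$, so that $R^{-1}\hat W P=\bar\cS_k(a)$ and $R^{-1}W(\la)P=\bar\cS_k(\la)$. This gives
\[
\rho_k(\la)=\rho_{\cS_k(a)}(\cS_k(\la))=\rho_{\bar\cS_k(a)}(\bar\cS_k(\la))=0,\qquad \la\in(a,b],\quad k\in[0,N]_{\sZ},
\]
which is exactly \eqref{nonoscbiG}. Theorem~\ref{relth1}, for which \eqref{nonoscbiG} is necessary and sufficient, then delivers the identities \eqref{renormeigconB} and \eqref{ad10conB}, finishing the proof.

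There is no genuine obstacle in this argument: the single point that needs care is that the monotonicity condition survives the constant symplectic change of variables $\cS_k(\la)\mapsto R^{-1}\cS_k(\la)P$, which is precisely Proposition~\ref{psiZmod}(iv); once this is granted, Corollary~\ref{underconB} together with the invariance \eqref{prop1} does the rest. A slightly more self-contained variant would avoid quoting Corollary~\ref{underconB} by substituting $W(\la):=\bar\cS_k(\la)$ directly into \eqref{conbigWa}: after moving the comparative-index term to the left one obtains a sum of two nonnegative quantities equal to $\sum_{a<\nu\le b}\vartheta(\bar\cS_k(\nu))$, which vanishes by \eqref{contB}; hence $\rho_{\bar\cS_k(a)}(\bar\cS_k(\la))=0$ for $\la\in(a,b]$, and one then proceeds as above.
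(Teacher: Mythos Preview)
Your proof is correct and follows exactly the same route as the paper: apply Corollary~\ref{underconB} (equivalently, \eqref{conoscbig}/\eqref{conbigWa}) to $\bar\cS_k(\la)$ to get $\rho_{\bar\cS_k(a)}(\bar\cS_k(\la))=0$, and then use the invariance \eqref{prop1} from Proposition~\ref{newosc}(i) to conclude $\rho_{\cS_k(a)}(\cS_k(\la))=0$. Your additional justification that $\Psi(\bar\cS_k(\la))\ge0$ via Proposition~\ref{psiZmod}(iv) is a welcome detail that the paper leaves implicit.
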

\begin{proof}
As it was proved in Corollary~\ref{underconB} condition \eqref{contB} is
sufficient for $\rho_{\bar \cS_k(a)}(\bar \cS_k(\la))=0$, then by
Proposition~\ref{newosc}(i) we prove that $\rho_{\cS_k(a)}( \cS_k(\la))=0.$
\end{proof}
In particular, criterion \eqref{contB} is satisfied if one of the blocks $\cA_k(\la),\,\cC_k(\la),\,\cD_k(\la)$ in representation \eqref{dnlaosc:E:Intro.4} of $\cS_k(\la)$ is nonsingular in $(a,b].$ This assumption is true for the block $\cA_k(\la)$  of $\cS_k(\la)$ associated with the most important special case of \eqref{Sla}, with the discrete
Hamiltonian systems \cite{cdA.acP96}. Remark that condition \eqref{conB} is not assumed.
\begin{example}\label{Hamex}
Consider the discrete Hamiltonian eigenvalue problem
    \begin{gather}
\label{LHS} \Delta x\K=A_k(\lambda)x\K+B_k(\lambda)u_k,\quad \Delta
u_k=C_k(\lambda) x\K-A^T_k(\lambda)u_k ,\\\notag \det \left( {I -
{A}_{k}(\lambda)} \right) \ne 0,\, \,k=0,\dots,N,\\ \label{dirbh}
         x_0(\lambda)=x_{N+1}(\lambda)=0,
 \end{gather}
with the Hamiltonian
$$\cH_{k}(\lambda)=\cH_{k}^{T}(\lambda),\quad \cH_{k}(\lambda)=\begin{pmatrix}
                                           - {C}_{k}(\lambda) &  {A}_{k}^{T}(\lambda) \\
                                             {A}_{k}(\lambda) &   {B}_{k}(\lambda) \\

                                         \end{pmatrix}$$
which obeys the monotonicity condition (see \cite[Example 7.9]{rSH12})
\begin{equation}\label{monham}
    \dot{\cH}_{k}(\lambda)\ge 0,\,\lambda\in \mathbb{R}.
\end{equation}
For the Hamiltonian system \eqref{LHS} rewritten in form
\eqref{Sla} the matrix $\cS_{k}(\la)$ is given by
\begin{equation}                                      \label{LHS-SDS}
\cS_{k}(\lambda)=\begin{pmatrix} (I-A_k(\lambda))^{-1} & \quad(I-A_k(\lambda))^{-1}B_k(\lambda)\\
C_k(\lambda)(I-A_k(\lambda))^{-1} & \quad C_k(\lambda)(I-A_k(\lambda))^{-1}B_k(\lambda) +I-A^T_k(\lambda)\end{pmatrix}.
\end{equation}
The block $\cA_{k}(\la)=(I-A_k(\lambda))^{-1}$ of $\cS_{k}(\la)$ is
nonsingular, then there exists the matrices $R=I$ and $P=J$ such
that $\bar \cS_{k}(\la):=\cS_{k}(\la)J$ has the nonsingular block
$\tB_{k}(\la),$ i.e., condition \eqref{contB} is satisfied.
Applying Corollary~\ref{underconBgen} we derive that $\cS_{k}(\la)$ given
by \eqref{LHS-SDS} obeys condition \eqref{nonoscbiG}.
\begin{corollary}\label{renham}
Consider problem \eqref{LHS}, \eqref{dirbh} under assumption \eqref{monham}, then for any $a<b$ we have condition \eqref{nonoscbiG} for
matrix \eqref{LHS-SDS} and
according to
Theorem~\ref{relth1} identity \eqref{renormeigconB}  holds for the number $\#\{\nu\in\sigma |a <\nu\le b\}$ of finite eigenvalues of \eqref{LHS}, \eqref{dirbh}.
\end{corollary}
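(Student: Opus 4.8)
The plan is to reduce everything to Corollary~\ref{underconBgen} and the necessity-and-sufficiency part of Theorem~\ref{relth1}, using the observations already assembled in the preamble of Example~\ref{Hamex}. First I would record that the symplectic coefficient matrix $\cS_k(\la)$ of \eqref{LHS}, written in the form \eqref{LHS-SDS}, satisfies the monotonicity assumption \eqref{Smon}: this is the content of \cite[Example~7.9]{rSH12}, where the passage from $\dot{\cH}_k(\la)\ge 0$ in \eqref{monham} to $\Psi(\cS_k(\la))\ge 0$ is carried out by a direct computation relating the discrete Hamiltonian $\cH_k(\la)$ to $\Psi(\cS_k(\la))$. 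Thus the hypotheses \eqref{dnlaosc:E:Intro.4}, \eqref{Smon} of Corollary~\ref{underconBgen} are in force for any $a<b$.

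Next I would exhibit the symplectic matrices $R$ and $P$ required by Corollary~\ref{underconBgen}. Since $\det(I-A_k(\la))\ne 0$ along \eqref{LHS}, the block $\cA_k(\la)=(I-A_k(\la))^{-1}$ of $\cS_k(\la)$ is nonsingular for every $k\in[0,N]_{\sZ}$ and every $\la\in\bR$. Taking $R:=I$ and $P:=\cJ$, the transformed matrix $\bar\cS_k(\la)=\cS_k(\la)\cJ$ has upper right block $\bar B_k(\la)=\cA_k(\la)=(I-A_k(\la))^{-1}$, which is nonsingular and hence of constant rank $n$ on any interval $(a,b]$. Therefore \eqref{contB} holds for $\bar\cS_k(\la)$, and Corollary~\ref{underconBgen} yields condition \eqref{nonoscbiG} for the matrix \eqref{LHS-SDS}, for every pair $a<b$.

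Finally, with \eqref{nonoscbiG} established, Theorem~\ref{relth1} applies verbatim: the equivalence of \eqref{nonoscbiG} with \eqref{renormeigconB} gives the identity \eqref{renormeigconB}, in which $\#\{\nu\in\sigma\,|\,a<\nu\le b\}$ is the number of finite eigenvalues of \eqref{LHS},~\eqref{dirbh} in the sense of Definition~\ref{dnlaosc:D:finite.eigenvalue}. This completes the argument.

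There is essentially no obstacle once the two cited ingredients are in place; the single point requiring care is verifying that the monotonicity of the discrete Hamiltonian, $\dot{\cH}_k(\la)\ge 0$, indeed translates into $\Psi(\cS_k(\la))\ge 0$ for the matrix \eqref{LHS-SDS}. That is the one computation I would reproduce (or cite explicitly from \cite[Example~7.9]{rSH12}), and it is the only place where the specific block structure of \eqref{LHS-SDS} enters; everything else is a direct invocation of the general results of Section~\ref{sec4}.
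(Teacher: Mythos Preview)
Your proposal is correct and follows essentially the same route as the paper: the argument in Example~\ref{Hamex} is precisely to take $R=I$, $P=\cJ$ so that the upper right block of $\bar\cS_k(\la)=\cS_k(\la)\cJ$ equals $\cA_k(\la)=(I-A_k(\la))^{-1}$, invoke Corollary~\ref{underconBgen} to obtain \eqref{nonoscbiG}, and then apply Theorem~\ref{relth1}. Your added remark that \eqref{Smon} must be verified from \eqref{monham} via \cite[Example~7.9]{rSH12} is a sensible clarification the paper leaves implicit.
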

Recall that condition \eqref{conB} for
$\cS_{k}(\la)$ is not assumed, moreover, applying
Corollary~\ref{kratzcor} for
$Q(\la):=B_{k}(\la)=\cA_{k}^{-1}(\la)\cB_{k}(\la)$ we see that
\begin{equation}\label{Hamfoc}
\sum\limits_{a<\nu\le b}\vartheta_{k}(\nu)=\ind B_{k}(a)-\ind B_{k}(b)=\mu(\langle \cS_{k}(b)\rangle,\langle\cS_{k}(a)\rangle),
\end{equation}
where we have used that $\rank \cB_{k}(\la)=\rank B_{k}(\la)$,
\eqref{rankQind} and \eqref{conoscbig}.
\end{example}
\begin{remark}
The results of this paper can be further  developed to the case of two symplectic eigenvalue problems in form of \eqref{Sla} with different coefficient matrices $\cS_k(\la),\,\hS_k(\la)$, which are the subject of the relative oscillation theory. This theory is developed in \cite[Chapter 6]{bookDEH} for $\cS_k(\la),\,\hS_k(\la)$ under restriction \eqref{rBconst}.  Using new comparison results in \cite{jE20} we are going to generalise the relative oscillation theory to the case when this condition is omitted.
\end{remark}
\section*{Acknowledgements}   This research is supported by Federal
Programme of Ministry of Education and Science of the Russian
Federation in the framework of the state order [grant number
2014/105] and the Czech Science Foundation under grant
GA19--01246S.

\appendix

\section{Proof of Lemma~\ref{newosc1}}\label{sec5}
Introduce the $4n\times 4n$ matrices
\begin{equation}\label{R}\begin{array}{c}
\hat {\cZ}(\la)=R^{-1}\{I,W(\la)\hat W^{-1}\}R,\quad
\cZ(\la)=R^{-1}\{I,W(\la)\},\\[2mm] \{I,W(\la)\}=\begin{pmatrix}
                   I & 0 & 0 & 0 \\
                   0 & A(\la) & 0 & B(\la) \\
                   0 & 0 & I & 0 \\
                   0 & C(\la) & 0 & D(\la) \\
                 \end{pmatrix},\;  R=\frac{1}{\sqrt{2}}\begin{pmatrix}
                                                                                0 & -I & I & 0 \\
                                                                                0 & I & I & 0 \\
                                                                                -I & 0 & 0 & -I \\
                                                                                -I & 0 & 0 & I \\
                                                                              \end{pmatrix}.
                                                                              \end{array}\end{equation}
Then it easy to verify that $R$ is symplectic (and orthogonal) and $\{I,W(\la)\},\,\cZ(\la),\,\hat {\cZ}(\la)\in \bR^{4n\times 4n}$ are symplectic provided $W(\la),\,\hat W\in\Sp$, moreover $$\Psi(\cZ(\la))=R^{T}\Psi(\{I,W(\lambda)\})R=R^{T}\{0,\Psi(W(\lambda))\}R\ge 0$$ and
\begin{align*}
  \Psi(\hat {\cZ}(\la))&=R^{T}\Psi(\{I,W(\lambda)\hat W^{-1}\})R
    =R^{T}\{0,\Psi(W(\lambda)\hat W^{-1})\}R=R^{T}\{0,\Psi(W(\lambda))\}R\ge 0
\end{align*}
provided \eqref{psimon} holds. Observe that the assumptions of Theorem~\ref{contsep} are satisfied because $\hat{\cZ}^{-1}(\la)Z(\la)=R^{-1}\{I,\hat W\}$ does not depend on $\la$. Applying \eqref{elyscond51sep}  we have
\begin{equation}\label{ad5}
\begin{aligned}
  l_c(\hat{\cZ}(\la)\bnn,a,b)&-l_c({\cZ}(\la)\bnn,a,b)=\mu({\cZ}(\la)\bnn,\hat{\cZ}(\la)\bnn)|_b^a.
\end{aligned}\end{equation}
Using Property 3 of the comparative index (see \cite[p. 448]{E-DE-2009} or \cite[Theorem~3.5~(iii)]{bookDEH}) and then \cite[Lemma~2.3(v)]{E-DE-2010} (see also \cite[Lemma~3.21~(iii)]{bookDEH}) we simplify the right-hand side of \eqref{ad5} according to
\begin{align*}
\mu({\cZ}(\la)\bnn,\hat{\cZ}(\la)\bnn)&=\mu^*({\cZ}^{-1}(\la)\bnn,{\cZ}^{-1}(\la)\hat{\cZ}(\la)\bnn)\\&=\mu^*(\langle W(\la)^{-1} \rangle,\langle \hat W^{-1}\rangle)=\mu(\langle W(\la) \rangle,\langle \hat W\rangle).
\end{align*}
Moreover, the upper blocks of the matrices $\hat{\cZ}(\la)\bnn,\,{\cZ}(\la)\bnn$ have the form
\begin{equation*}
 (I\;0) \hat{\cZ}(\la)\bnn=\frac{1}{2}\cJ(I-W(\la)\hat W^{-1}),\;(I\;0) {\cZ}(\la)\bnn=\frac{1}{\sqrt{2}}\smat{-I}{-A^T(\la)}{0}{-B^T(\la)},
\end{equation*}
 where the rank of the second matrix in the above formula  is equal to $n+\rank B(\la).$ Then
\begin{equation*}
l_c(\hat{\cZ}(\la)\bnn,a,b)=\sum\limits_{a<\nu\le b}\rho_{\hat W}(W(\nu)),\;\;l_c({\cZ}(\la)\bnn,a,b)=\sum\limits_{a<\nu\le b}\vartheta (W(\nu)).
\end{equation*}

Substituting the representations derived above into \eqref{ad5} we prove formula \eqref{conbig}.


\begin{thebibliography}{32}

\bibitem[1]{cdA.acP96}
C.~D.~Ahlbrandt, A.~C.~Peterson,
{\em Discrete Hamiltonian Systems. Difference Equations, Continued Fractions, and Riccati Equations}
Kluwer Texts in the Mathematical Sciences, Vol.~16, Kluwer Academic Publishers Group, Dordrecht, 1996.

\bibitem{teschl1}
 K. Ammann and G. Teschl, {\em Relative
    Oscillation Theory for Jacobi Matrices}, In: Proceedings of
    the 14th International Conference on Difference Equations
    and Applications, M. Bohner (ed) et al.
    U\u{g}ur--Bah\c{c}e\c{s}ehir University Publishing Company,
    Istanbul (2009), pp. 105--115.
\bibitem{amman}
K.~Ammann,
{\em Relative oscillation theory for Jacobi matrices extended},
{Oper. Matrices} {\bf 1} (2014), 99--115.

\bibitem{mB98b}
M.~Bohner,
{\em Discrete linear Hamiltonian eigenvalue problems},
{Comput. Math. Appl.} {\bf 36} (1998), no.~10--12, 179--192.

\bibitem{mB.oD97}
M.~Bohner, O.~Do\v{s}l\'y,
{\em Disconjugacy and transformations for symplectic systems},
{Rocky Mountain J. Math.} { 27} (1997), no.~3, 707--743.

\bibitem{B-D-K-RMJM}
M.~Bohner, O.~Do\v{s}l\'y, W.~Kratz.
{\em An oscillation theorem for discrete eigenvalue problems},
{Rocky Mountain J. Math.} { 33} (2003), no.~4, 1233--1260.

\bibitem{BHK} M. Bohner, W. Kratz, R.\v{S}imon Hilscher, {\em Oscillation and spectral theory for linear Hamiltonian systems
with nonlinear dependence on the spectral parameter}, Math. Nachr. 285, No.11--12, (2012), 1343--1356.


\bibitem{D-K-JDEA-2007}
O. Do\v{s}l\'y, W. Kratz,
{\em Oscillation theorems for symplectic difference systems},
{J. Difference Equ. Appl.} { 13} (2007), 585--60.

\bibitem{D-Kyoto}
{O. Do\v sl\'y},
    {\em Oscillation theory of symplectic difference systems},
    Adv. Stud. Pure Math., 53,
    Math. Soc. Japan, Tokyo, 2009.

\bibitem{D-K-JDEA-2010}
O. Do\v{s}l\'y, W. Kratz,
{\em Oscillation and spectral theory for symplectic difference systems with separated boundary conditions},
{J. Difference Equ. Appl.} {\bf 16} (2010), 831--846.

\bibitem{D-E-JDEA}
O. Do\v{s}l\'y, J. Elyseeva,
{\em Singular comparison theorems for discrete symplectic systems},
{J. Difference Equ. Appl.} {20} (2014), no.~8, 1268--1288.


\bibitem{bookDEH} O.~Do\v{s}l\'y, J. Elyseeva,
    R.~\v{S}imon~Hilscher, {\em Symplectic difference systems:
    oscillation and spectral theory}, Birkh\"{a}user Basel, DOI: 10.1007/978-3-030-19373-7, ISBN: 978-3-030-19373-7.

\bibitem{E-DE-2009}
Yu. V. Eliseeva,
{\em Comparative index for solutions of symplectic difference systems},
{Differential Equations} { 45} (2009), no.~3, 445--459.


\bibitem{E-DE-2010}
Yu. V. Eliseeva,
{\em Comparison theorems for symplectic systems of difference equations},
{Differential Equations} { 46} (2010), no.~9, 1339--1352.

\bibitem{E-AML2010}
 J. Elyseeva, {\em On relative oscillation theory for symplectic eigenvalue
 problems}
, Appl. Math. Letters, 23 (2010), pp. 1231--1237.

\bibitem{E-AML2012}
J. V. Elyseeva,
{\em A note on relative oscillation theory for symplectic difference systems with general boundary conditions},
{Appl. Math. Lett.} { 25} (2012), no.~11, 1809--1814.

\bibitem{E-ADV}
 {J.V. Elyseeva},
    {\em Relative oscillation theory for matrix
Sturm-Liouville difference equations extended},
    Adv. Differ. Equ. 2013,  (2013:328)
Available at
http://www.advancesindifferenceequations.com/content/2013/1/328

\bibitem{jE15}
J.~V. Elyseeva,
{\em Generalized oscillation theorems for symplectic difference systems with nonlinear dependence on spectral parameter},
{Appl. Math. Comput.} { 251} (2015), 92--107.

\bibitem{jE16}
J.~V.~Elyseeva,
{\em Comparison theorems for conjoined bases of linear Hamiltonian differential systems and the comparative index},
{J. Math. Anal. Appl.} { 444} (2016), no.~2, 1260--1273.


\bibitem{jvE18}
J.~V.~Elyseeva,
{\em The comparative index and transformations of linear Hamiltonian differential systems},
{Appl. Math. Comput.} { 330} (2018), 185--200.


\bibitem{jvE.rSH18}
J.~V.~Elyseeva, R.~\v{S}imon~Hilscher,
{\em Discrete oscillation theorems for symplectic eigenvalue problems with general boundary conditions depending nonlinearly on spectral parameter},
{Linear Algebra Appl.} { 558} (2018), 108--145.

\bibitem{jE20} J. Elyseeva, {\em Comparison theorems for conjoined bases of linear
Hamiltonian systems without monotonicity}, {Monatsh. Math.} (2020),
DOI 10.1007/s00605-020-01378-8

\bibitem{Gesztesy1} F. Gesztesy, B. Simon, G. Teschl, {\em Zeros of the Wronskian and renormalized oscillation theory}, American J. Math. 118 (1996) 571--594.

\bibitem{Gesztesy2} F. Gesztesy, M. Zinchenko, {\em Renormalized oscillation theory for Hamiltonian systems}, Adv. Math. 311 (2017) 569–597.

\bibitem{Howard} P. Howard, A. Sukhtayev, {\em Renormalized Oscillation Theory for Linear Hamiltonian
Systems on [0, 1] via the Maslov Index}, 2018,	arXiv:1808.08264 [math.CA].

\bibitem{K-JDEA}
 {W. Kratz},
    {\em Discrete oscillation}, {J.
    Difference Equ. Appl.} {9} (2003), pp. 135--147.


\bibitem{Kratz-Analysis}
W.~Kratz,
{\em Definiteness of quadratic functionals},
{Analysis (Munich)} { 23} (2003), 163--183.

\bibitem{wK.rSH13}
W.~Kratz, R.~\v{S}imon~Hilscher,
{\em A generalized index theorem for monotone matrix-valued functions with applications to discrete oscillation theory},
{SIAM J. Matrix Anal. Appl.} { 34} (2013), no.~1, 228¨C-243.

\bibitem{Kruger} H. Kr\"{u}ger, G. Teschl, {\em Relative oscillation theory, weighted zeros of the Wronskian, and the spectral shift function}, Commun. Math. Phys. 287 (2009), 613--640.
\bibitem{pS.rSH17}
P.~\v{S}epitka, R.~\v{S}imon~Hilscher,
{\em Comparative index and Sturmian theory for linear Hamiltonian systems},
{J. Differential Equations} { 262} (2017), no.~2, 914--944.

\bibitem{rSH12}
R.~\v{S}imon~Hilscher,
{\em Oscillation theorems for discrete symplectic systems with nonlinear dependence in spectral parameter},
{Linear Algebra Appl.} { 437} (2012), no.~12, 2922--2960.

\bibitem{Hilsch4} R. \v{S}imon  Hilscher, {\em On General Sturmian theory for abnormal linear Hamiltonian systems}, Discrete and Continuous Dynamical systems  (2011),  684--691.

\bibitem{teshl3}
G. Teschl,
{\em Oscillation theory and renormalized oscillation theory for Jacobi operators},
{J. Differential Equations} { 129} (1996), 532--558.
\end{thebibliography}
\end{document}